\documentclass[final]{dmtcs-episciences}
\usepackage[utf8]{inputenc}
\usepackage{subfigure}
\usepackage[round]{natbib}

\pdfoutput=1
\usepackage[T1]{fontenc}
\usepackage{amsmath,amssymb,amsbsy}
\usepackage{mathtools}
\usepackage{multirow}
\usepackage{amsfonts}
\usepackage{amsthm}
\usepackage{amssymb}
\usepackage{graphicx, enumerate}
\usepackage{amsmath}
\usepackage{latexsym}
\usepackage{longtable}
\usepackage{tabularx}
\usepackage{amsmath}
\usepackage{amsfonts}
\usepackage{amsthm}
\usepackage{setspace}
\usepackage{graphicx}
\usepackage{float}
\usepackage{rotating}
\usepackage{tikz}
\usepackage{verbatim}
\usepackage{cleveref}
\usepackage{appendix}

\newtheorem{thm}{Theorem}[section]
\newtheorem{lem}[thm]{Lemma}

\newtheorem{prop}[thm]{Proposition}
\newtheorem{cor}[thm]{Corollary}
\newtheorem{prob}[thm]{Problem}
\newtheorem{dfn}[thm]{Definition}

\newtheorem{conj}[thm]{Conjecture}

\newcommand{\zet}{\mathbb{Z}}

\author[Sylwia Cichacz and Karol Suchan]{Sylwia Cichacz\affiliationmark{1}
\and Karol Suchan\affiliationmark{2,1}}

\title[Zero-sum partitions of Abelian groups]{Zero-sum partitions of Abelian groups and their applications to magic- and antimagic-type labelings}

\affiliation{
AGH University, Krakow, Poland\\
Universidad Diego Portales, Santiago, Chile}

\keywords{Abelian group, zero-sum sets, irregular labeling, magic-type labeling, antimagic-type labeling}

\begin{document}

\publicationdata{vol. 26:3}{2024}{14}{10.46298/dmtcs.12361}{2023-10-03; 2023-10-03; 2024-04-26; 2024-09-19}{2024-09-25}

\maketitle

\begin{abstract}
The following problem has been known since the 80s. Let $\Gamma$ be an Abelian group of order $m$ (denoted $|\Gamma|=m$), and let $t$ and $\{m_i\}_{i=1}^{t}$, be positive integers such that $\sum_{i=1}^t m_i=m-1$. Determine when $\Gamma^*=\Gamma\setminus\{0\}$, the set of non-zero elements of $\Gamma$, can be partitioned into disjoint subsets $\{S_i\}_{i=1}^{t}$ such that $|S_i|=m_i$ and $\sum_{s\in S_i}s=0$ for every  $1 \leq i \leq t$. Such a subset partition is called a \textit{zero-sum partition}.

$|I(\Gamma)|\neq 1$, where $I(\Gamma)$ is the set of involutions in $\Gamma$, is a necessary condition for the existence of zero-sum partitions. In this paper, we show that the additional condition of $m_i\geq 4$ for every $1 \leq i \leq t$, is sufficient. Moreover, we present some applications of zero-sum partitions to magic- and antimagic-type labelings of graphs.

\end{abstract}




\section{Introduction}\label{sec:intro}

\subsection{Preliminaries}\label{subsec:preliminaries}

For standard terms and notation in graph theory, the reader is referred to the textbook by~\citet{Diestel} and the monograph by~\citet{Brandstadt}, an introduction to magic- and antimagic-type labelings can be found in the monograph by~\citet{BMRS}, and the fundamental results in abstract algebra that we use can be found in the textbook by~\citet{Gallian}.
 
Let $\Gamma$ be an Abelian group of order $m$ with the operation denoted by $+$.  For convenience, we will denote $\sum_{i=1}^k a$ by $ka$, the inverse of $a$ by $-a$, and $a+(-b)$ by $a - b$. Moreover, we will write $\sum_{a\in S}{a}$ for the sum of all elements in $S$. The identity element of $\Gamma$ will be denoted by $0$, and the set of non-zero elements of $\Gamma$ by $ \Gamma^*$.
Recall that any element $\iota\in\Gamma$ of order 2 (i.e., $\iota\neq 0$ and $2\iota=0$) is called an \emph{involution}. A non-trivial finite group has an involution if and only if the order of the group is even. We will write $I(\Gamma)$ for the set of involutions of $\Gamma$. 

The fundamental theorem of finite Abelian groups states that every finite Abelian group $\Gamma$ is isomorphic to the direct product of some cyclic subgroups of prime-power orders~\citep{Gallian}. In other words, there exists a positive integer $k$, (not necessarily distinct) prime numbers $\{p_i\}_{i=1}^{k}$, and positive integers $\{\alpha_i\}_{i=1}^{k}$, such that
$$\Gamma\cong\zet_{p_1^{\alpha_1}}\times\zet_{p_2^{\alpha_2}}\times\ldots\times\zet_{p_k^{\alpha_k}} \mathrm{, with}\; m = p_1^{\alpha_1}\cdot p_2^{\alpha_2}\cdot\ldots\cdot p_k^{\alpha_k},$$ where $m$ is the order of $\Gamma$. Moreover, this group factorization is unique (up to the order of terms in the direct product). {Since any cyclic finite group of even order has exactly one involution, if $e$ is the number of cyclic subgroups in the factorization of $\Gamma$ whose order is even, then $|I(\Gamma)|=2^e-1$.}

Because the results presented in this paper are invariant under the isomorphism between groups ($\cong$), we only need to consider one group in every isomorphism class. Our presentation will be focused on groups of the form $\zet_{p_1^{\alpha_1}}\times\zet_{p_2^{\alpha_2}}\times\ldots\times\zet_{p_k^{\alpha_k}}$ with prime numbers $\{p_i\}_{i=1}^{k}$ and positive integers $\{\alpha_i\}_{i=1}^{k}$. For an Abelian group $\Gamma \cong U \times H$, for a pair $(u,v)$ with $u \in U$ and $ v\in H$, we will use the notation $(u,v)$ also for the corresponding element of $\Gamma$. 

Recall that, for a prime number $p$, a group the order of which is a power of $p$ is called a \textit{$p$-group}. Given a finite Abelian group $\Gamma$, \textit{Sylow $p$-subgroup} of $\Gamma$ is the maximal subgroup $L$ of $\Gamma$ the order of which is a power of $p$. For example, by the fundamental theorem of finite Abelian groups, it is easy to see that any finite Abelian group $\Gamma$ can be factorized as $\Gamma\cong L\times H$, where $L$ is the Sylow $2$-group of $\Gamma$ and the order of $H$ is odd.

Let us denote the number of involutions in $\Gamma$ by $|I(\Gamma)|$. Since any finite cyclic  group of even order has exactly one involution, if $e$ is the number of cyclic components in the factorization of $\Gamma$ whose order is even, then $|I(\Gamma)|=2^e-1$.

Recall that the sum of all elements of a group $\Gamma$ is equal to the sum of its involutions and the identity element. {The following lemma is well known. The readers can consult \citet{ref_ComNelPal} for a proof.}
\begin{lem}[\citep{ref_ComNelPal}]\label{involutions} Let $\Gamma$  be an Abelian group.
\begin{itemize}
 \item[-] If $|I(\Gamma)|=1$, then $\sum_{g\in \Gamma}g= \iota$, where $\iota$ is the involution.
\item[-] If $|I(\Gamma)|\neq 1$, then $\sum_{g\in \Gamma}g=0$.
\end{itemize}
\end{lem}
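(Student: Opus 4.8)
The plan is to compute $S:=\sum_{g\in\Gamma}g$ directly by exploiting the pairing $g\leftrightarrow -g$. First I would note that the set map $g\mapsto -g$ is a bijection of $\Gamma$ onto itself whose fixed points are exactly the solutions of $2g=0$, namely $0$ together with the involutions in $I(\Gamma)$. Partitioning $\Gamma$ into the singletons $\{0\}$ and $\{\iota\}$ for $\iota\in I(\Gamma)$, together with the two-element sets $\{g,-g\}$ for which $g\neq -g$, and observing that each such two-element set contributes $g+(-g)=0$ to the sum, I obtain
\[
S \;=\; 0 \;+\; \sum_{\iota\in I(\Gamma)}\iota \;=\; \sum_{\iota\in I(\Gamma)}\iota .
\]
This already disposes of two cases: if $I(\Gamma)=\emptyset$ then $S=0$, and if $I(\Gamma)=\{\iota\}$ then $S=\iota$.

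It remains to treat the case $|I(\Gamma)|\neq 1$ with $I(\Gamma)\neq\emptyset$; by the count $|I(\Gamma)|=2^{e}-1$ recalled just before the lemma, this case is precisely $e\geq 2$. Here I would use that $J:=I(\Gamma)\cup\{0\}$ is a subgroup of $\Gamma$ — it is closed under $+$ because $\Gamma$ is abelian and $2a=2b=0$ forces $2(a+b)=2a+2b=0$ — and that every non-identity element of $J$ has order $2$, so $J\cong\zet_2^{\,e}$. Since $0$ adds nothing, $S=\sum_{x\in J}x$, and I only need to show $\sum_{x\in J}x=0$ when $e\geq 2$.

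For this final step I would pick a subgroup $K\le J$ of index $2$ with a coset representative $a\in J\setminus K$ and compute
\[
\sum_{x\in J}x \;=\; \sum_{x\in K}x \;+\; \sum_{x\in K}(x+a) \;=\; 2\sum_{x\in K}x \;+\; |K|\,a \;=\; 2^{e-1}a,
\]
using $2y=0$ for all $y\in J$ to kill the first term. As $e\geq 2$, the coefficient $2^{e-1}$ is even, hence $2^{e-1}a=0$ and $S=0$, as claimed. (Equivalently, identifying $J$ with $\zet_2^{\,e}$ coordinatewise, each coordinate equals $1$ in exactly $2^{e-1}$ of the $2^{e}$ elements, an even number when $e\geq 2$, so every coordinate of the total sum vanishes.) There is no genuine obstacle in this argument; the only point requiring care is the bookkeeping that the outcome is governed by the parity of $2^{e-1}$ — equivalently, by whether $e=1$ (a unique involution) or $e\geq 2$ — which is exactly the dichotomy in the statement.
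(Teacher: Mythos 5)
Your proof is correct and follows essentially the same route the paper indicates: it cites the remark preceding the lemma that $\sum_{g\in\Gamma}g$ equals the sum of the involutions plus the identity (your pairing $g\leftrightarrow -g$), and your completion via the subgroup $I(\Gamma)\cup\{0\}\cong(\zet_2)^e$ with the parity of $2^{e-1}$ is the standard argument from the cited reference. No gaps.
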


 \subsection{Main Problem}\label{subsec:problem}

In 1981 Tannenbaum introduced the following problem of partitioning in Abelian groups.
\begin{prob}[\citep{Tannenbaum1}]\label{problemT}
Let $\Gamma$ be an Abelian group of order $m$. Let $t$ be a positive integer and $\{m_i\}_{i=1}^{t}$ an integer partition of $m-1$. Let $\{w_i\}_{i=1}^{t}$ be arbitrary elements of $\Gamma$ (not necessary distinct). Determine when the elements of $\Gamma^*$ can be partitioned into subsets $\{S_i\}_{i=1}^{t}$ (i.e., subsets $S_i$ are pairwise disjoint and their union is $\Gamma^*$) such that $|S_i|=m_i$ and $\sum_{s\in S_i}s=w_i$ for every  $1\leq i \leq t$.\end{prob}

{If a respective subset partition of $\Gamma^*$ exists, then we say that $\{m_i\}_{i=1}^t$ is \textit{realizable} in $\Gamma^*$ with $\{w_i\}_{i=1}^{t}$. We will not indicate $\Gamma^*$ or $\{w_i\}_{i=1}^{t}$ explicitly when it is clear from the context.} Note that realizability of $\{m_i\}_{i=1}^{t}$ implies that $\sum_{i=1}^t w_i=\sum_{g\in \Gamma}g$. 

Note that we use sequences $\{m_i\}_{i=1}^t$ only for ease of presentation, whereas we interpret them as multisets. In other words, only the values present in the sequence and their multiplicities are relevant for a sequence to be realizable, not their order. 

Since, throughout the paper, we often consider subsets of fixed cardinalities, let us present an abbreviated notation. Given a set, any of its subsets of cardinality $k$ is called a \textit{$k$-subset}.

The case most studied in the literature is the {\em Zero-Sum Partition (ZSP)} problem, i.e., when $w_i=0$ for every $1\leq i \leq t$. (A respective subset partition of $\Gamma^*$ is called a \textit{zero-sum partition} of $\Gamma$.)

In general, in the context of any subset $Z$ of $\Gamma$, we will say that $\{S_i\}_{i=1}^t$, a partition of $Z$ into zero-sum subsets with $|S_i|=m_i$ for every  $1 \leq i \leq t$, realizes $\{m_i\}_{i=1}^t$ in $Z$.

Note that, by Lemma \ref{involutions}, an Abelian group $\Gamma$ with $|I(\Gamma)|=1$ does not admit zero-sum partitions {(for every $t$ and $\{m_i\}_{i=1}^{t}$)}. Moreover, if $\{m_i\}_{i=1}^{t}$ is realizable in $\Gamma^*$, then necessarily $m_i\geq 2$ for every  $1\leq i \leq t$. It was proved that this condition is sufficient if and only if $|I(\Gamma)|=0$~\citep{Tannenbaum1,Zeng} or $|I(\Gamma)|=3$~\citep{Zeng}. I.e., in these cases, for every positive integer $t$ and integer partition $\{m_i\}_{i=1}^{t}$ of $m-1$ with $m_i \geq 2$ for every $1 \leq i \leq t$, $\{m_i\}_{i=1}^{t}$ is realizable in $\Gamma^*$ (with $\{w_i\}_{i=1}^{t}$ where $w_i=0$ for every  $1 \leq i \leq t$). In \citet{CicrSuch}, we generalized this condition with the following definition.

\begin{dfn}Let $\Gamma$ be a finite Abelian group of order $m$. We say that $\Gamma$ has {\em $x$-Zero-Sum Partition Property ($x$-ZSPP)} if, for every positive integer $t$ and integer partition $\{m_i\}_{i=1}^t$ of $m-1$ with $m_i \geq x$ for every $1 \leq i \leq t$, there exists a subset partition $\{S_i\}_{i=1}^t$ of $\Gamma^*$ with $|S_i| = m_i$ and $\sum_{s\in S_i}s = 0$ for every $1 \leq i \leq t$. 
\end{dfn}

For every finite Abelian group $\Gamma$ such that $|I(\Gamma)|=0$ (equivalently, the order of $\Gamma$ is odd), a stronger version of $2$-ZSPP was proved. Let us give a brief explanation.

In 1957, inspired by Steiner triples research, Skolem posed the following question \citep{ref_Sko57}: {For $n\equiv 1\pmod 6$, does there exist a partition of the set of non-zero elements of the cyclic group $\mathbb{Z}_n$ into triples such that the sum of elements in each subset is congruent to $0\pmod n$?

This question received an affirmative answer \citep{ref_Han,ref_Sko57, ref_Sko58} and served as a starting point for a more general problem posed by Tannenbaum. Namely, we call a $6$-subset $C$ of an Abelian group $\Gamma$ \textit{good} if $C = \{c, d,-c -d,-c,-d, c + d\}$ for some $c$ and $d$ in $\Gamma$. Notice that the sum of elements of a good $6$-subset is $0$. Moreover, it can be partitioned into three zero-sum $2$-subsets or two zero-sum $3$-subsets.

 The following definition was given by~\citet{Tannenbaum1}.

\begin{dfn}\label{dfn:skolem}Let $\Gamma$ be a finite Abelian group of order $m=6k+s$ for a non-negative integer $k$ and $s\in\{1,3,5\}$. A partition of $\Gamma^*$ into $k$ good $6$-subsets and $(s-1)/2$ zero-sum $2$-subsets is called a {\em Skolem partition of $\Gamma^*$}.
\end{dfn}

Note that, if $\Gamma$ is an Abelian group of order $m$ such that every integer partition $\{m_i\}_{i=1}^t$ of $m-1$ with $m_i \in \{2, 3\}$ for every $1 \leq i \leq t$ is realizable in $\Gamma^*$, then also every integer partition $\{m'_i\}_{i=1}^{t'}$ of $m-1$ with $m'_i \geq 2$ for every $1 \leq i \leq t'$ is realizable. Similarly, it is easy to see that if a Skolem partition of $\Gamma^*$ exists, then every integer partition $\{m_i\}_{i=1}^t$ of $m-1$ with $m_i \in \{2, 3\}$ for every $1 \leq i \leq t$ is realizable. So the following theorem by Tannenbaun indeed offers a stronger version of $2$-ZSPP.

\begin{thm}[\citep{Tannenbaum1}]\label{Tannenbaum1}Let $\Gamma$ be a finite Abelian group such that $|I(\Gamma)|=0$, then $\Gamma^*$ has a Skolem partition. 
\end{thm}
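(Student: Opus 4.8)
The plan is to recast the statement as a purely combinatorial partition problem, solve it first for cyclic groups of odd order, and then bootstrap it to arbitrary finite Abelian groups of odd order by induction on the number of cyclic direct factors.

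\emph{Reformulation.} Since $|I(\Gamma)|=0$, the order $m$ of $\Gamma$ is odd, so $x\neq -x$ for every $x\in\Gamma^*$, and $\Gamma^*$ is the disjoint union of the $(m-1)/2$ antipodal zero-sum $2$-subsets $P_x=\{x,-x\}$. Call three pairwise distinct antipodal pairs $P_a,P_b,P_e$ a \emph{signed-zero-sum triple} if $a,b,e$ can be signed so as to sum to $0$. A good $6$-subset $\{c,d,c+d,-c,-d,-c-d\}$ is exactly the union $P_c\cup P_d\cup P_{c+d}$ of such a triple, and conversely the union of a signed-zero-sum triple is a good $6$-subset (the distinctness of the three pairs is precisely what makes the union have $6$ elements). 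Writing $m=6k+s$ with $s\in\{1,3,5\}$, producing a Skolem partition of $\Gamma^*$ is therefore the same as partitioning the set of $(m-1)/2=3k+(s-1)/2$ antipodal pairs into $k$ signed-zero-sum triples and $(s-1)/2$ leftover pairs.

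\emph{Cyclic groups.} For $\Gamma=\zet_n$, identify $P_i$ with $i\in\{1,\dots,(n-1)/2\}$. A short check shows that, for distinct $i,j,\ell$ in this range, $\{P_i,P_j,P_\ell\}$ is a signed-zero-sum triple if and only if one of $i,j,\ell$ equals the sum of the other two, or $i+j+\ell=n$. The task becomes: partition $\{1,\dots,(n-1)/2\}$ into $k$ triples of these two shapes together with $(s-1)/2$ singletons. This is the classical Skolem-type partition problem (whence the terminology): the required triples are read off from a Skolem sequence when a suitable associated order is $\equiv 0,1\pmod 4$, from a hooked Skolem sequence when it is $\equiv 2,3\pmod 4$, and from near-Skolem (and hooked near-Skolem) variants when $s\in\{3,5\}$, after splitting into a bounded number of residue subclasses of $n$ and disposing of a few small orders by hand. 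I would display one representative construction per class and leave the (routine, modular) verifications to the reader.

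\emph{Induction step.} Let $\Gamma\cong\zet_n\times H$ with $H$ of odd order $>1$ and one fewer cyclic factor, and fix by induction a Skolem partition of $H^*$, i.e.\ a partition of $H$'s antipodal pairs into signed-zero-sum triples and $\le 2$ leftover pairs. Split $\Gamma^*=(\zet_n^*\times\{0\})\,\cup\,(\zet_n\times H^*)$. Each signed-zero-sum triple of $H$ has the form $\{P_h,P_{h'},P_{h+h'}\}$, and the $6n$ elements of $\zet_n\times(P_h\cup P_{h'}\cup P_{h+h'})$ split into the $n$ good $6$-subsets
\[
\bigl\{(i,h),\,(i,h'),\,(2i,h+h'),\,(-i,-h),\,(-i,-h'),\,(-2i,-h-h')\bigr\},\qquad i\in\zet_n,
\]
which are genuine good $6$-subsets and pairwise disjoint, because $n$ is odd and $h,-h,h',-h',h+h',-h-h'$ are six distinct elements. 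The axis copy $\zet_n^*\times\{0\}$ is handled by the cyclic construction of the previous step, and that is where the total number of leftover $2$-subsets is tuned to $(s-1)/2$. The delicate point is a leftover antipodal pair $P_g$ of $H$ (which occurs whenever $|H|\not\equiv 1\pmod 6$): the naive layer construction for $\zet_n\times\{g,-g\}$ would require the layer $2g$, available only when $3g=0$, so in general I would merge each such $2n$-element block with the axis copy $\zet_n^*\times\{0\}$ and re-partition the union directly.

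\emph{Main obstacle.} The skeleton above is short; the real work is the bookkeeping — pinning down the (hooked/near-)Skolem constructions in every residue class of $n$, clearing a small number of exceptional orders by hand, and, above all, coordinating the leftovers in the induction step (the $\le 2$ leftover antipodal pairs of $H$ together with the axis copy $\zet_n^*\times\{0\}$) so that exactly $(s-1)/2$ zero-sum $2$-subsets survive in the final partition of $\Gamma^*$, neither more nor fewer. I expect this last coordination of leftovers, rather than any individual construction, to be the crux of the argument.
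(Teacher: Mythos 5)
First, note that the paper does not prove this statement: Theorem~\ref{Tannenbaum1} is quoted from Tannenbaum's 1981 paper, so there is no in-paper proof to compare against. Judged on its own, your reformulation (Skolem partition $=$ partition of the $(m-1)/2$ antipodal pairs into signed-zero-sum triples plus $(s-1)/2$ leftover pairs) is correct, your layer construction over the good $6$-subsets of $H^*$ is correct (the $n$ sets you display really are pairwise disjoint good $6$-subsets because $x\mapsto 2x$ is a bijection of $\zet_n$ for odd $n$), and the cyclic base case does reduce to Heffter's difference problems, classically solved via (hooked/near-)Skolem sequences — though you only assert this part.

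The genuine gap is exactly where you suspect it, and it is not bookkeeping: the step ``merge each leftover block $\zet_n\times P_g$ with the axis $\zet_n^*\times\{0\}$ and re-partition the union directly'' provably cannot work in general. A good $6$-subset has $H$-projection $\{\sigma,\tau,\sigma+\tau,-\sigma,-\tau,-\sigma-\tau\}$; if it is contained in the merged set $S=(\zet_n^*\times\{0\})\cup(\zet_n\times P_g)\cup(\zet_n\times P_{g'})$ then $\sigma,\tau,\sigma+\tau\in\{0,\pm g,\pm g'\}$, and whenever $g'\neq\pm 2g$, $g\neq\pm 2g'$, $3g\neq 0$, $3g'\neq 0$ (the generic situation), this forces $\{\sigma,\tau\}\subseteq\{0,\pm g\}$ or $\{0,\pm g'\}$, so every such $6$-subset meets the layer $\zet_n\times\{g\}$ in $0$ or $2$ elements. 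Since that layer has odd size $n$, at least one antipodal pair survives in it, and likewise for $g'$: the merged block always leaves at least $2$ zero-sum $2$-subsets, whereas for $n\equiv 3,5\pmod 6$ and $|H|\equiv 5\pmod 6$ only $1$ or $0$ are allowed. Concretely, for $\Gamma=\zet_{11}\times\zet_{11}$ (order $121\equiv 1\pmod 6$, so zero leftover pairs are permitted), every Skolem partition of $\zet_{11}^*$ leaves two generic pairs, and the resulting $54$-element merged block admits no partition into good $6$-subsets at all; moreover for $H\in\{\zet_3,\zet_5\}$ the merged block is all of $\Gamma^*$, so the induction is vacuous precisely on groups such as $\zet_5\times\zet_5$. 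Repairing this requires good $6$-subsets whose $H$-coordinates leave $\{0,\pm g,\pm g'\}$, i.e.\ breaking open and recombining some of the already-partitioned layers $\zet_n\times(\text{good }6\text{-subsets of }H^*)$ — an additional idea that the proposal does not supply and that constitutes the real content of Tannenbaum's argument.
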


The following theorem was first conjectured by~\citet{KLR} (they also showed the necessity), and later proved by~\citet{Zeng}.
\begin{thm}[\citep{Zeng}]\label{Zeng}Let $\Gamma$ be a finite Abelian group of order $m$. $\Gamma$ has $2$-ZSPP if and only if $|I(\Gamma)|\in\{0,3\}$. 
\end{thm}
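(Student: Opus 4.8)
\emph{Proof idea.} The plan is: settle necessity directly; reduce sufficiency to realizing partitions of $m-1$ into parts of size $2$ and $3$; dispatch the case $|I(\Gamma)|=0$ via Tannenbaum's Skolem partitions; and concentrate the real work in the case $|I(\Gamma)|=3$. Suppose first that $\Gamma$ has $2$-ZSPP. If $|I(\Gamma)|=1$, then by Lemma~\ref{involutions} $\sum_{g\in\Gamma}g=\iota\neq0$, whereas realizability of a partition $\{m_i\}_{i=1}^{t}$ of $m-1$ with all target sums $0$ forces $\sum_{g\in\Gamma}g=0$ --- impossible. If $|I(\Gamma)|=2^{e}-1\geq7$, i.e.\ $e\geq3$, then $8\mid m$, so $m-1$ is odd and at least $7$; consider the partition $\{3,2,\ldots,2\}$ with exactly one part of size $3$. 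Any zero-sum $2$-subset $\{a,b\}$ has $b=-a\neq a$, hence contains no involution, so in any realization all $2^{e}-1\geq7$ involutions must lie in subsets of size $\geq3$, of which there is only one. Thus $|I(\Gamma)|\in\{0,3\}$.

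\textbf{Reduction and the case $|I(\Gamma)|=0$.} For sufficiency it is enough to realize every partition of $m-1$ into parts of size $2$ and $3$: given any partition with all parts $\geq2$, refine each part into a sum of $2$'s and $3$'s, realize the finer partition, and then re-merge the pieces belonging to each original part (a disjoint union of zero-sum subsets is again a zero-sum subset of the right size). If $|I(\Gamma)|=0$, then $m$ is odd, and by Theorem~\ref{Tannenbaum1} $\Gamma^{*}$ has a Skolem partition into $k$ good $6$-subsets and $(s-1)/2$ zero-sum $2$-subsets, where $m=6k+s$; a $\{2,3\}$-partition of $m-1$ has an even number $2d$ of $3$'s with $d\leq k$ (a count of subset sizes), and is realized by splitting $d$ of the good $6$-subsets into two zero-sum $3$-subsets each and the remaining good $6$-subsets into three zero-sum $2$-subsets each. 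Together with the re-merging step this gives $2$-ZSPP, so only the case $|I(\Gamma)|=3$ remains (the case conjectured by Kaplan et al.~\cite{KLR}).

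\textbf{The case $|I(\Gamma)|=3$.} Here $I(\Gamma)\cup\{0\}\cong\zet_{2}\times\zet_{2}$, so $I(\Gamma)=\{\iota_{1},\iota_{2},\iota_{3}\}$ with $\iota_{1}+\iota_{2}+\iota_{3}=0$; in particular $\{\iota_{1},\iota_{2},\iota_{3}\}$ is a zero-sum $3$-subset and $S:=\Gamma^{*}\setminus I(\Gamma)$ is closed under negation with $|S|=m-4$. I would prove the following analogue of Tannenbaum's theorem for the involution-free part: if $|I(\Gamma)|=3$, then $S$ can be partitioned into good $6$-subsets together with at most two zero-sum $2$-subsets. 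Granting this, any $\{2,3\}$-partition of $m-1$ --- which, since $4\mid m$, has an \emph{odd} number $2d+1$ of $3$'s --- is realized by using $\{\iota_{1},\iota_{2},\iota_{3}\}$ for one of the $3$'s, splitting $d$ of the good $6$-subsets of $S$ into two zero-sum $3$-subsets each (with $d$ at most the number of good $6$-subsets, by a size count) and the remaining good $6$-subsets into zero-sum $2$-subsets; re-merging then yields $2$-ZSPP. The analogue itself I would attack by induction on $|\Gamma|$: the base cases are the groups whose Sylow $2$-subgroup has rank $2$ and small order --- $\zet_{2}^{2}$, $\zet_{2}\times\zet_{4}$, $\zet_{4}^{2}$, $\zet_{2}^{2}\times\zet_{3}$, and a few more --- handled by explicit constructions, while the inductive step peels off a cyclic direct factor $\zet_{p^{\beta}}$, applies the inductive hypothesis (or, for odd $p$, Tannenbaum's theorem) to the ``inner'' part, and packs the cosets indexed by the nonzero elements of the peeled factor, grouped in negation-closed pairs, into good $6$-subsets with a bounded leftover of zero-sum $2$- and $3$-subsets.

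\textbf{Main obstacle.} All the difficulty is concentrated in the case $|I(\Gamma)|=3$, and within it in the inductive step of the above analogue: forcing the cosets of the peeled-off factor into \emph{good} $6$-subsets (not merely arbitrary zero-sum $6$-subsets) requires controlling negation-closure, the pairwise distinctness of the six elements, and the coset count modulo $6$ simultaneously, and the residue classes of $|\Gamma|$ modulo $12$ each need their own bookkeeping. The partitions with very few parts --- such as $\{m-1\}$ or $\{3,m-4\}$ --- and the smallest rank-$2$ $2$-groups, where there is no slack to absorb a mismatch, are exactly the points at which a uniform construction breaks down and case-by-case arguments become unavoidable.
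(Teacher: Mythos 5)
This theorem is quoted from \cite{Zeng}; the paper contains no proof of it to compare against, so I can only assess your argument on its own terms. Your necessity direction is correct and complete: $|I(\Gamma)|=1$ is excluded by Lemma~\ref{involutions}, and for $|I(\Gamma)|=2^{e}-1\geq 7$ the partition $\{3,2,\ldots,2\}$ (valid since $8\mid m$ makes $m-1$ odd) cannot accommodate seven involutions, none of which can sit in a zero-sum $2$-subset $\{a,-a\}$. The reduction to $\{2,3\}$-partitions, the re-merging step, and the case $|I(\Gamma)|=0$ via Theorem~\ref{Tannenbaum1} are also correct, including the parity count showing the number of $3$'s is $2d$ with $d\leq k$.

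The gap is the case $|I(\Gamma)|=3$, which is the entire content of the theorem beyond what Tannenbaum already proved. You reduce it to the claim that $R=\Gamma^{*}\setminus I(\Gamma)$ admits a Skolem partition (good $6$-subsets plus at most two zero-sum $2$-subsets), but you do not prove that claim: the proposed induction is described only as ``peel off a cyclic factor and pack the cosets into good $6$-subsets with bounded leftover,'' and you yourself concede that the base cases and the packing require unavoidable case analysis that is not supplied. This cannot be waved through. A Skolem partition is strictly stronger than the zero-sum partitions you need (each $6$-subset must have the rigid form $\{c,d,-c-d,-c,-d,c+d\}$), it does not follow from $2$-ZSPP, and its existence is genuinely delicate: the analogous statement for $\zet_m$ with $|S|\equiv 2\pmod 6$ \emph{fails} for $m\equiv 14,20\pmod{24}$, and the present paper poses the characterization of groups for which $R$ has a Skolem partition as an open problem (Problem~\ref{conjectureSK}). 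So your key lemma may well be true for $|I(\Gamma)|=3$, but as written the proposal replaces the hard theorem by an unproved statement of comparable (or greater) difficulty; Zeng's actual argument does not proceed via Skolem partitions of $R$ but via complete mappings and a different induction.
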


The above theorem confirms for the case of $|I(\Gamma)|=3$ the following conjecture stated by Tannenbaum. 

\begin{conj}[\citep{Tannenbaum1}]\label{conjectureT}Let $\Gamma$ be a finite Abelian group of order $m$ with $|I(\Gamma)|>1$. Let $R =\Gamma^* \setminus I(\Gamma)$. For every positive integer $t$ and integer partition $\{m_i\}_{i=1}^t$ of $m-1$ with {$m_i \geq 2$ for every $1 \leq i \leq |R|/2$ and {$m_i \geq 3$} for every $|R|/2+1\leq i\leq t$}, there is a subset partition $\{S_i\}_{i=1}^t$ of $\Gamma^*$ such that $|S_i| = m_i$ and $\sum_{s\in S_i}s = 0$ for every $1 \leq i \leq t$. 
\end{conj}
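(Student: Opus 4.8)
The plan is to prove the conjecture by handing the genuinely large parts to the paper's main result (the sufficiency of $m_i\ge 4$ announced in the abstract) while building the excess parts of size $2$ and $3$ explicitly from the inverse-pair structure of $\Gamma$. Write $\Gamma\cong L\times H$ with $L$ the Sylow $2$-subgroup and $|H|$ odd, so that $R=\Gamma^*\setminus I(\Gamma)$ is a disjoint union of inverse pairs $\{a,-a\}$; hence $|R|$ is even and there are exactly $|R|/2$ such pairs. Since the size sequence is a multiset, the hypothesis is equivalent to the single constraint that the number $n_2$ of parts equal to $2$ satisfies $n_2\le |R|/2$. The only zero-sum $2$-subsets of $\Gamma$ are the inverse pairs in $R$, so this bound is also \emph{necessary}; the whole content is its sufficiency, and $|I(\Gamma)|>1$ forces $e\ge 2$, i.e.\ $|I(\Gamma)|=2^e-1\ge 3$, via Lemma~\ref{involutions}.

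The first step is to account for the size-$2$ parts. I would assign the $n_2$ parts of size $2$ to $n_2$ distinct inverse pairs of $R$, which is possible exactly because $n_2\le |R|/2$. The subtlety is that after deleting pairs the remaining elements form a \emph{subset} $\Gamma'\subseteq\Gamma^*$ (still containing every involution), not a subgroup, so neither Theorem~\ref{Zeng} nor the $m_i\ge 4$ result applies to $\Gamma'$ as a black box. I would therefore not delete pairs greedily; instead I would reserve the $n_2$ chosen pairs together with a small pool of further inverse pairs and involutions tuned to the size-$3$ parts, and hand the rest of $\Gamma^*$ to the $m_i\ge 4$ machinery. The abundance of inverse pairs—in particular those with nonzero $H$-coordinate, which can be organized by a Skolem partition of $H$ via Theorem~\ref{Tannenbaum1}—should supply enough flexibility to realize all parts of size $\ge 4$ on the unreserved elements.

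The heart of the argument is the treatment of the parts of size $3$ together with the involutions, since involutions cannot sit in any zero-sum $2$-subset. I would use three flavours of zero-sum triple: $\{a,b,-a-b\}$ drawn from $R$; the mixed triple $\{\iota,a,\iota-a\}$ pairing one involution $\iota$ with two non-involutions (note $\iota+a+(\iota-a)=2\iota=0$, and $a,\iota-a$ are non-involutions when $a$ is); and the pure triple $\{\iota_1,\iota_2,\iota_1+\iota_2\}$ of three involutions. The involutions and $0$ form a copy of $(\zet_2)^e$ with $2^e-1$ nonzero elements, and the most delicate configuration is when all inverse pairs have been used as $2$-parts, leaving one to partition these $2^e-1$ involutions into zero-sum subsets of prescribed sizes, all $\ge 3$. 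This is precisely the zero-sum partition problem for the elementary abelian group $(\zet_2)^e$; I expect it to need its own induction on $e$, splitting $(\zet_2)^e$ into a hyperplane $(\zet_2)^{e-1}$ and its nonzero coset, distributing the prescribed sizes across the two halves, and using that the sum of all involutions is $0$ for $e\ge 2$.

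The main obstacle is thus twofold. First, one cannot merge small parts into parts of size $\ge 4$, invoke the main result, and split afterwards: a zero-sum set produced by that result need not decompose into zero-sum pieces of the prescribed smaller sizes (a generic zero-sum $4$-set $\{a,b,c,d\}$ has no zero-sum $2$-subset), so the construction must build the small parts \emph{explicitly} and reserve only large parts for the main theorem. Second, the pure-involution case on $(\zet_2)^e$ with all parts $\ge 3$ has no inverse pairs to exploit and is itself a non-trivial realizability statement; controlling it and then re-integrating it with the non-involution pairs so that parity and the identity $\sum_{g\in\Gamma}g=0$ are respected is where the real work lies. Once these two points are settled, assembling the three families—size-$2$ inverse pairs, the size-$3$ triples absorbing the involutions, and the $\ge 4$ parts from the main result—into a single partition of $\Gamma^*$ completes the proof.
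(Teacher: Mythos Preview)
This statement is a \emph{conjecture}, not a theorem: the paper does not prove it, and says so explicitly (``for many groups $\Gamma$ with $|I(\Gamma)|>3$ the conjecture is still open''). The paper's only contribution toward it is Theorem~\ref{Skol}, which settles the special family $\Gamma\cong(\zet_2)^{2\eta}\times H$ with $|H|\equiv 1\pmod 6$. So there is no ``paper's own proof'' to compare against, and your proposal should be read as an attack on an open problem.

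There is a genuine gap in your plan, and it is exactly the one you flag but do not close. The paper's main result (Corollary~\ref{nowemain}, the $4$-ZSPP) is a statement about partitioning \emph{all} of $\Gamma^*$; it says nothing about partitioning a proper subset of $\Gamma^*$. Once you reserve even a single inverse pair for a size-$2$ part, the remainder is no longer of the form $\Gamma'^*$ for any group $\Gamma'$, and the $m_i\ge 4$ machinery cannot be invoked on it. Your sentence ``hand the rest of $\Gamma^*$ to the $m_i\ge 4$ machinery'' is therefore not a step but a restatement of the problem. The paper's own proof of the special case (Theorem~\ref{Skol}) does \emph{not} use $4$-ZSPP as a black box for this very reason: it builds everything by hand from a Skolem partition of $H^*$ into good $6$-subsets, which simultaneously supply zero-sum $2$-, $3$-, $4$-, and $5$-subsets on demand, and then does a careful case analysis on how the target sequence splits across $M^*\cong L^*$ and the union $W$ of good $6$-subsets.

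Two smaller remarks. First, the ``pure-involution case'' you single out as delicate --- partitioning $(\zet_2)^e$ into zero-sum sets of sizes $\ge 3$ --- is not an obstacle: it is exactly Theorem~\ref{Sylow}, already in the literature. Second, your mixed triples $\{\iota,a,\iota-a\}$ are a reasonable local device, but coordinating them globally so that the non-involutions $a,\iota-a$ you consume do not collide with the inverse pairs already committed to $2$-parts, \emph{and} so that what remains is still amenable to a $\ge 4$ partition, is precisely the hard part; the paper achieves this control only when the non-involution part $R$ carries a Skolem partition, and poses the existence of such partitions for general $\Gamma$ as Problem~\ref{conjectureSK}.
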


It was shown independently by a few authors that the conjecture is also true for $\Gamma\cong (\zet_2)^n$ with $n>1$ (notice that in this case $I(\Gamma)=\Gamma^*$, so $R=\emptyset$). 

\begin{thm}[\citep{ref_CaccJia,Egawa,Tannenbaum2}]\label{Sylow}Let $\Gamma\cong (\zet_2)^n$ with $n>1$, then $\Gamma$ has $3$-ZSPP. 
\end{thm}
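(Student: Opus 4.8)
The plan is to prove the statement by induction on $n$. By isomorphism-invariance we work with $\Gamma = (\zet_2)^n$. The base case $n = 2$ is immediate: the only integer partition of $2^2 - 1 = 3$ into parts $\geq 3$ is $(3)$, and $\Gamma^*$ itself is a zero-sum set (its three elements sum to $0$ by Lemma~\ref{involutions}, since $|I(\Gamma)| = 3$). For the inductive step I would first make a harmless reduction. Since every integer $\geq 3$ is a sum of terms from $\{3,4,5\}$, any integer partition $\{m_i\}_{i=1}^t$ of $2^n-1$ with each $m_i \geq 3$ refines to an integer partition of $2^n - 1$ with all parts in $\{3,4,5\}$; moreover, given a zero-sum subset partition realizing such a refinement, merging for each $i$ the blocks obtained from the pieces of $m_i$ produces a zero-sum block of size $m_i$ (a union of zero-sum sets is zero-sum, as the total sum is the sum of the partial sums). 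Hence it suffices to treat partitions all of whose parts lie in $\{3,4,5\}$.

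For the step itself --- now $n \geq 3$, assuming the theorem for $n-1 \geq 2$ --- fix a subgroup $H \leq \Gamma$ of index $2$ and an element $v \in \Gamma \setminus H$, so that $\Gamma^* = H^* \sqcup (v+H)$ with $|H^*| = 2^{n-1}-1$ (odd), $|v+H| = 2^{n-1}$ (even), and $H \cong (\zet_2)^{n-1}$. The key structural observation is that a zero-sum subset $S$ contained entirely in the coset $v+H$ must have even cardinality: writing $S = \{v+h_1,\dots,v+h_k\}$ we get $\sum_{s \in S} s = k\,v + \sum_{j} h_j$, which, because $v \notin H$, can equal $0$ only when $k$ is even (and then $\sum_j h_j = 0$). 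In particular every block of size $3$ or $5$ must use at least one element of $H^*$. The construction I would carry out is: choose a sub-multiset $\mathbf m_0$ of the blocks to be realized inside $H^*$ --- which is precisely an instance of the theorem for $(\zet_2)^{n-1}$, furnished by the induction hypothesis --- fill the coset $v+H$ using size-$4$ blocks lying wholly inside it together with ``bridge'' blocks that take exactly $2$ elements from $v+H$ and the rest from $H^*$, and verify that the numbers of elements allocated to $H^*$ (which must total $2^{n-1}-1$) and to $v+H$ (which must total $2^{n-1}$) can be made to balance. Producing any single bridge block explicitly is routine, since we only ever need a zero-sum set of size at most $5$ with a prescribed split between $H^*$ and $v+H$, and $H$ is large.

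The main obstacle is exactly this balancing. One must select $\mathbf m_0$ and the bridge blocks so that: the elements drawn from $H^*$ by the blocks of $\mathbf m_0$ together with those drawn by the bridge blocks total exactly $2^{n-1}-1$; the elements drawn from $v+H$ total exactly $2^{n-1}$, with the part not covered by bridges divisible by $4$ so that it can be tiled by wholly-internal $4$-blocks; and $\mathbf m_0$ still consists only of parts $\geq 3$, so that the induction hypothesis applies. The one parity invariant available for this accounting is that the number of odd-size blocks is odd (because $2^n - 1$ is odd), which is what makes the odd-sized piece $H^*$ come out right. I expect the genuinely delicate situations to be the ``extreme'' configurations in which almost every block is forced to meet $H^*$ --- for instance $\mathbf m = (3,3,\dots,3)$, possible when $n$ is even, or $\mathbf m = (5,5,\dots,5)$ --- leaving little or none of $v+H$ to be handled separately; these, together with the smallest values of $n$, would need their own explicit treatment (for the all-$3$'s partition, for example, one can instead exhibit a partition of $\Gamma^*$ into lines through the origin, which exists exactly when $3 \mid 2^n - 1$). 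Fitting the generic construction together with these special cases into a single clean case analysis is where essentially all of the effort lies.
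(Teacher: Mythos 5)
The paper does not actually prove Theorem~\ref{Sylow} --- it is imported from the literature (\cite{ref_CaccJia,Egawa,Tannenbaum2}) --- so there is no in-paper argument to compare yours against; I can only judge the proposal on its own. As written it is a plan rather than a proof, and besides the unfinished case analysis that you yourself flag, it contains one concrete structural gap in the inductive step. You propose to realize a sub-multiset $\mathbf{m}_0$ of the blocks inside $H^*$ ``by the induction hypothesis'' while simultaneously letting bridge blocks draw $1$, $2$ or $3$ elements from $H^*$. But the induction hypothesis only furnishes zero-sum partitions of \emph{all} of $H^*=H\setminus\{0\}$; once the bridges have consumed some elements of $H^*$, what remains is $H^*$ minus an essentially arbitrary subset, and nothing you have assumed lets you partition that remainder into zero-sum blocks of prescribed sizes $\geq 3$. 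Nor can you repair this by insisting that the bridges take their $H^*$-elements in zero-sum chunks: a size-$3$ bridge uses a single nonzero element of $H^*$, which is never zero-sum on its own. So the induction, as formulated, does not close; you would need to strengthen the statement being inducted on (for instance, to partitions of $H^*$ with a prescribed small set of elements removed, or to partitions in which some blocks are required to have prescribed nonzero sums) before the step can even be set up.

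Beyond that, the part you identify as ``where essentially all of the effort lies'' --- choosing how many elements each block draws from $H^*$ versus $v+H$ so that the two sides come out to $2^{n-1}-1$ and $2^{n-1}$, together with the extreme configurations such as all parts equal to $3$ or all equal to $5$ --- is precisely the content of the theorem. The parity observations you make (a zero-sum block meets the coset $v+H$ in an even number of elements; the number of odd parts is odd) are correct, but they only show that the accounting is \emph{consistent}, not that it can always be achieved; and the proposed rescue for the all-$3$'s case (a spread of $3$-element zero-sum triples, i.e.\ projective lines of $\mathrm{PG}(n-1,2)$, which exists exactly when $3\mid 2^n-1$) is itself a nontrivial fact you would have to establish. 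As it stands, the proposal correctly locates the difficulties but resolves none of them, so it cannot be accepted as a proof.
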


Note that, in general, even the weaker version of Conjecture~\ref{conjectureT} posed by \citet{CicZ} is still open.
\begin{conj}[\citep{CicZ}]\label{conjecture}Let $\Gamma$ be a finite Abelian group with $|I(\Gamma)|>1$. Then $\Gamma$ has $3$-ZSPP. 
\end{conj}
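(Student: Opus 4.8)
The plan is to build on the paper's $4$-ZSPP result and on the two structural engines already available: Theorem~\ref{Sylow} for elementary abelian $2$-groups and Theorem~\ref{Tannenbaum1} (Skolem partitions) for the odd part. Write $\Gamma\cong L\times H$, where $L$ is the Sylow $2$-subgroup (necessarily non-cyclic, since $|I(\Gamma)|>1$ forces $e\geq 2$ even cyclic factors) and $|H|$ is odd, and recall that $\sum_{g\in\Gamma}g=0$ by Lemma~\ref{involutions}. The first move is a merging reduction: since a disjoint union of zero-sum subsets is again zero-sum, realizing any refinement of a partition realizes the partition itself; and every integer $n\geq 3$ is a sum of terms from $\{3,4,5\}$. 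Hence it suffices to realize every partition of $m-1$ all of whose parts lie in $\{3,4,5\}$. Next, let $R=\Gamma^*\setminus I(\Gamma)$; its elements occur in inverse pairs $\{x,-x\}$, each a zero-sum $2$-subset, and two disjoint pairs form a zero-sum $4$-subset. Thus $4$-parts can always be served from $R$, and a $5$-part can be served by one pair together with a disjoint zero-sum $3$-subset, so the entire difficulty is concentrated in the $3$-parts.

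To place the $3$-parts I would assemble a catalogue of zero-sum $3$-subsets of three kinds. First, inside the involution block $T=I(\Gamma)\cup\{0\}\cong(\zet_2)^e$ with $e\geq 2$: by Theorem~\ref{Sylow} the group $(\zet_2)^e$ has $3$-ZSPP, so $T^*=I(\Gamma)$ splits into zero-sum subsets of \emph{any} prescribed sizes $\geq 3$, giving full freedom to absorb the involutions. Second, inside $H^*$: a Skolem partition (Theorem~\ref{Tannenbaum1}) supplies good $6$-subsets, each splitting into two zero-sum $3$-subsets. Third, straddling gadgets: for an involution $\iota$ and any $x$ with $x\notin\{0,\iota\}$ and $2x\neq\iota$, the triple $\{\iota,\,x,\,\iota-x\}$ has sum $2\iota=0$, so it is a zero-sum $3$-subset that consumes one involution together with one ``$\iota$-reflected'' pair $\{x,\iota-x\}$. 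Combining these with transversal triples $\{(a_1,h_1),(a_2,h_2),(a_3,h_3)\}$ across $L\times H$ — whose existence on the full product $L^*\times H^*$ reduces to orthomorphisms (complete mappings) of the odd-order factors, which exist precisely because $|H|$ is odd — should let me realize any prescribed number of $3$-parts while respecting the inverse-pair bookkeeping on $R$.

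The global argument I would run by induction on $|\Gamma|$, with base cases $\Gamma\cong(\zet_2)^n$ (Theorem~\ref{Sylow}) and $|\Gamma|$ odd (Theorem~\ref{Tannenbaum1}). In the inductive step, when $H$ is non-trivial I would peel off an odd cyclic factor $\Gamma\cong\zet_{q}\times\Gamma'$, partition $\Gamma^*$ into cosets $\{j\}\times\Gamma'$, treat the identity coset $\{0\}\times(\Gamma')^*$ by the inductive hypothesis (which applies, since $\Gamma'$ retains the full $2$-torsion, so $|I(\Gamma')|=|I(\Gamma)|>1$), and tile the remaining cosets by distributing part-sizes using the $3$-subset catalogue above and the ``free'' $4$-parts from $R$; when $H$ is trivial, $\Gamma=L$ is a $2$-group handled by induction within $2$-groups. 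To make such partial constructions compose, the induction should carry a strengthened \emph{subset-realizability} statement: that suitable unions of cosets — zero-sum by Lemma~\ref{involutions} applied fibrewise, subject to mild congruence conditions on the indices — realize every partition of their size into parts $\geq 3$.

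The step I expect to be the genuine obstacle is the case $m\equiv 1\pmod 3$, in which \emph{every} part is forced to equal $3$ and all slack from $4$- and $5$-parts disappears. Here two tensions must be reconciled at once: the Skolem partition of $H^*$ leaves $0$, $1$, or $2$ residual zero-sum $2$-subsets (according as $|H|\equiv 1,3,5\pmod 6$) that cannot appear in an all-$3$ tiling and must instead be fused with elements of $L$ through straddling or transversal triples; and the number $2^e-1$ of involutions need not be divisible by $3$, so the involution block cannot always be self-contained and must likewise spill into mixed triples. Proving that these two corrections can always be carried out compatibly — equivalently, producing the required systems of zero-sum transversals across $L\times H$ in every residue class — is where I expect the main work, and any genuinely new idea beyond the $4$-ZSPP argument, to be required.
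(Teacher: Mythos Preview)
The statement you are attempting to prove is Conjecture~\ref{conjecture}, and the paper does \emph{not} prove it: it is explicitly left open. What the paper establishes is $3$-ZSPP when the odd part $H$ satisfies $|H|\equiv 1$ or $3\pmod 6$ (Theorems~\ref{1mod6} and~\ref{3mod6}), and only $4$-ZSPP when $|H|\equiv 5\pmod 6$ (Theorem~\ref{5mod6}); see the final remarks. So there is no ``paper's own proof'' to compare against, and your proposal would have to stand on its own as a proof of an open conjecture.

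It does not. What you have written is an outline that correctly identifies the ingredients the paper itself uses --- Skolem partitions of $H^*$, complete mappings (Lemma~\ref{bijection}) to build transversal zero-sum triples across $L\times H$, and $3$-ZSPP of the $2$-part (Theorem~\ref{SK}, not just Theorem~\ref{Sylow}) --- but you explicitly stop short at the hard case and say so yourself: when every part must equal $3$ and the Skolem partition of $H^*$ leaves residual $2$-subsets, you note that ``proving that these two corrections can always be carried out compatibly\ldots is where I expect the main work, and any genuinely new idea\ldots to be required.'' That is precisely the case $|H|\equiv 5\pmod 6$ that the paper cannot close either. Your proposed induction on $|\Gamma|$ by peeling off an odd cyclic factor also rests on a ``strengthened subset-realizability statement'' for unions of cosets that you neither formulate precisely nor prove; without it the induction does not go through, and formulating it so that it is both true and self-propagating is exactly the missing content.

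In short: your plan parallels the paper's actual strategy quite closely (Skolem on $H$, complete mappings for cross-triples, $3$-ZSPP on the $2$-part, careful bookkeeping of $3/4/5$ parts), and it runs into the same wall the paper does. This is a reasonable sketch of why the conjecture is plausible and where the difficulty lies, but it is not a proof.
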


For every Abelian group $\Gamma$ with more than one involution and large enough order, first~\citet{CicTuz} showed that it has $4$-ZSPP, and later~\citet{MP} showed that it indeed has $3$-ZSPP.
This result was improved by~\citet{CicrSuch}, for every $\Gamma$ of order $2^{n}$ for some integer $n>1$ such that $I(\Gamma)\neq 1$, with the following theorem. 
\begin{thm}[\citep{CicrSuch}]\label{SK}Let $\Gamma$ be such that $|I(\Gamma)|>1$ and $|\Gamma|=2^n$ for some integer $n>1$. Then $\Gamma$ has {$3$-ZSPP.}
\end{thm}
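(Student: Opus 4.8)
The plan is to induct on $n$, after reducing to a convenient family of partitions. Since a union of zero-sum subsets is again a zero-sum subset, if a refinement of $\{m_i\}_{i=1}^t$ (obtained by splitting some of its parts) is realizable in $\Gamma^*$, then so is $\{m_i\}_{i=1}^t$; as every integer at least $3$ is a sum of terms from $\{3,4,5\}$, it suffices to realize in $\Gamma^*$ every partition of $2^n-1$ all of whose parts lie in $\{3,4,5\}$. Write $\Gamma\cong\zet_{2^{\alpha_1}}\times\cdots\times\zet_{2^{\alpha_k}}$ with $\alpha_1\geq\cdots\geq\alpha_k\geq 1$. Then $|I(\Gamma)|=2^k-1>1$ forces $k\geq 2$, hence $\alpha_1\leq n-1$. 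If $\alpha_1=1$ then $\Gamma\cong(\zet_2)^n$ and the statement is Theorem~\ref{Sylow}; so we may assume $\alpha_1\geq 2$. Small orders ($n\in\{2,3\}$) are checked directly and serve as the base of the induction.

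For the inductive step, set $N=2\zet_{2^{\alpha_1}}\times\zet_{2^{\alpha_2}}\times\cdots\times\zet_{2^{\alpha_k}}$, a subgroup of $\Gamma$ of index $2$ that is isomorphic to $\zet_{2^{\alpha_1-1}}\times\zet_{2^{\alpha_2}}\times\cdots\times\zet_{2^{\alpha_k}}$; as $\alpha_1\geq 2$ it still has $k\geq 2$ cyclic factors, so $|I(N)|=2^k-1>1$ and $|N|=2^{n-1}\geq 4$. By Lemma~\ref{involutions}, $\sum_{x\in N}x=0$, so $N^*$ is a zero-sum set of size $2^{n-1}-1$. Fix $a=(1,0,\ldots,0)\notin N$; since $\alpha_1\leq n-1$ we have $2^{n-1}a=0$, so $\sum_{x\in N+a}x=|N|\,a=2^{n-1}a=0$ and the coset $N+a$ is a zero-sum set of size $2^{n-1}$. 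Two facts about $N+a$ will be used: (i) the intersection of any zero-sum subset of $\Gamma$ with $N+a$ has even cardinality, because $\sum_i(y_i+a)\in N$ with all $y_i\in N$ forces the number of summands to be even; and (ii) since $\alpha_1\geq 2$ puts the whole $2$-torsion of $\Gamma$ inside $N$, no element of $N+a$ is an involution of $\Gamma$, so $x\mapsto -x$ is a fixed-point-free involution of $N+a$ and the coset splits into $2^{n-2}$ zero-sum $2$-subsets.

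Now let $\{m_i\}$ be a partition of $2^n-1$ with parts in $\{3,4,5\}$, and let $b$ be the number of parts equal to $4$. Suppose first that $b\geq 2^{n-3}$. Choose a sub-multiset $\mathcal A$ consisting of $2^{n-3}$ of the $4$-parts, so $\sum_{i\in\mathcal A}m_i=2^{n-1}$; realize $\mathcal A$ inside $N+a$ by grouping the $2^{n-2}$ zero-sum $2$-subsets from (ii) into $2^{n-3}$ pairs, and realize the complementary multiset (all parts at least $3$, summing to $2^{n-1}-1=|N^*|$) inside $N^*$ by the induction hypothesis applied to $N$. The union of the two families is the desired zero-sum partition of $\Gamma^*$.

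The main obstacle is the complementary, \emph{degenerate} case $b<2^{n-3}$ --- in practice the all-$3$'s partition when $n$ is even, and partitions of the form $3^a4^1$ or $3^a5^2$ when $n$ is odd --- where no sub-multiset of even parts sums to $2^{n-1}$, so the split above is unavailable. Here one does not require each part to lie entirely on one side of $N$: by (i), an odd-sized part that straddles $N$ and $N+a$ must meet $N+a$ in an even number of elements, for instance a $3$-part of the form $\{y_1+a,\ y_2+a,\ z\}$ with $z=-(y_1+y_2+2a)\in N$, while a $5$-part may use $2$ or $4$ elements of $N+a$. One then shows that, in each degenerate case, the coset elements can be exhausted by such straddling blocks together with the $2$-subsets of (ii) in a way that leaves in $N^*$ a sub-multiset of the remaining parts which is realizable by the induction hypothesis for $N$ (or, in the shortest cases, by inspection). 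The numerology works out precisely because $|\Gamma|$ is a power of $2$ --- for the all-$3$'s partition with $n$ even, for example, one needs $2^{n-2}$ straddling $3$-blocks, and $3\mid 2^{n-2}-1$ entirely-internal ones --- and arranging the straddling blocks so that the elements of $N^*$ they consume are distinct and the residual configuration in $N^*$ is exactly one produced by the inductive hypothesis is the technical heart of the proof; the rest is routine bookkeeping with zero-sum blocks and verification of the finitely many base cases.
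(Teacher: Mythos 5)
This statement is imported from \cite{CicrSuch} and the present paper gives no proof of it, so your attempt can only be judged on its own terms. The setup is sound: the reduction to parts in $\{3,4,5\}$, the choice of the index-$2$ subgroup $N$ with $|I(N)|>1$, the parity fact (i) about intersections of zero-sum sets with the coset $N+a$, the observation (ii) that $N+a$ is closed under negation and fixed-point-free (since $\alpha_1\geq 2$ puts all $2$-torsion inside $N$), and the case $b\geq 2^{n-3}$ are all correct.

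The degenerate case $b<2^{n-3}$, however, is a genuine gap, and it is where essentially all of the content of the theorem lives. First, you mischaracterize it: it is not confined to the all-$3$'s partition and $3^a4^1$, $3^a5^2$; any partition with few $4$-parts falls into it, e.g.\ $(0,0,3)$ for $\zet_4\times(\zet_2)^2$ or $(2,0,5)$ for $n=5$, and $5$-parts cannot be placed inside $N+a$ either (their intersection with the coset would be odd), so the many-$5$'s partitions are just as problematic as the all-$3$'s one. Second, the proposed repair is not carried out, and as stated it does not close: once straddling blocks such as $\{y_1+a,\,y_2+a,\,z\}$ consume some elements $z$ of $N^*$, what remains is a \emph{proper} zero-sum subset of $N^*$, whereas the induction hypothesis ($3$-ZSPP of $N$) only produces partitions of all of $N^*$; you cannot "apply the induction hypothesis" to the residue without either strengthening the inductive statement (to partitions of suitable zero-sum subsets, or to partitions of $N^*$ compatible with a prescribed set of straddling blocks) or giving an explicit construction that chooses the straddling blocks so that the residue is exactly a union of blocks of an inductively obtained partition. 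Arranging this — distinctness of the consumed elements, zero-sum of each straddling block, and compatibility of the residue with the remaining multiset of parts — is the actual theorem; declaring it "routine bookkeeping" leaves the proof incomplete.
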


Recall that, for every Abelian group $\Gamma$, there exist groups $L$ and $H$ with $|L|=2^\eta$ for a nonnegative integer $\eta$ and $|H|=\rho$ for an odd positive integer $\rho$ such that $\Gamma\cong L\times H$. Moreover, a finite Abelian group $\Gamma$ has no involutions if and only if its order is odd. So, if $\eta=0$, then $\Gamma$ has $2$-ZSPP by Theorem \ref{Zeng}. On the other hand, if $\rho=1$, then $\Gamma$ has $3$-ZSPP by Theorem \ref{SK}.

Let $\Gamma$ be an Abelian group such that $\Gamma\cong L\times H$, where $|L|=2^\eta$ and $|H|=\rho$ for a positive integer $\eta$ and an odd positive integer $\rho$, with $\rho>1$. The main contribution of this paper is to show that $\Gamma$ has $3$-ZSPP if $(\rho \bmod{6}) \in \{1,3\}$. Moreover, for $(\rho \bmod{6}) = 5$, we show that $\Gamma$ has $4$-ZSPP.  Therefore we improve the result by~\citet{CicTuz} and, towards proving Conjecture~\ref{conjecture}, we leave open only the question if, for $(\rho \bmod{6}) = 5$, $\Gamma$ has not only $4$-ZSPP, but also $3$-ZSPP. We also show that Conjecture~\ref{conjectureT} holds only for a group $\Gamma$ with $|I(\Gamma)| =3$ or $\Gamma \cong (\mathbb{Z}_2)^n$ with any $n>1$. As a complement, we present some applications in irregular, magic-type, and antimagic-type graph labeling to illustrate the relations of zero-sum group partitioning to graph theory.

In proofs, we use zero-sum partitions of {$\left((\zet_2)^3\times\zet_3\right)^*$, $\left((\zet_2)^3\times\zet_5\right)^*$, and $\left((\zet_2)^3\times\zet_7\right)^*$} as base cases. They were analyzed by a computer program that we created, and sample zero-sum partitions that certify that the corresponding sequences are realizable are given in the annexes.

\section{Main Result}\label{sec:mr}

Let Bij$(\Gamma)$ denote the set of all bijections from $\Gamma$ to itself.
A \textit{complete mapping} of a group $A$ is defined as $\varphi\in$Bij$(A)$ such that the mapping $\theta\colon g \mapsto  g^{-1}\varphi(g)$ is also bijective (some authors refer to $\theta$, rather than $\varphi$, as the complete mapping)~\citep{Mann}. In the proof of our main result we will use the following property of complete mappings:
\begin{lem}[\citep{CicZ,Zeng}]\label{bijection}
Let $\Gamma$ be a finite Abelian group such that $|I(\Gamma)|\neq 1$. Then there exist $\phi,\varphi\in$Bij$(\Gamma)$ (not necessarily distinct) such that $g+\phi(g)+\varphi(g)=0$  for every $g\in\Gamma$. In particular, we may assume that $\phi(0)=\varphi(0)=0$.
\end{lem}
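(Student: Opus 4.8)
\emph{Proof idea (before seeing the author's argument).} The plan is to observe that finding the pair $(\phi,\varphi)$ is exactly the problem of finding a single \emph{complete mapping} of $\Gamma$, and then to produce one via the decomposition $\Gamma\cong L\times H$ with $L$ the Sylow $2$-subgroup and $|H|$ odd. First I would reduce to one bijection: for any $\phi\in\mathrm{Bij}(\Gamma)$ put $\varphi(g):=-(g+\phi(g))$; then $g+\phi(g)+\varphi(g)=0$ holds identically, and $\varphi$ is a bijection exactly when $g\mapsto g+\phi(g)$ is one, since $\varphi$ is that map followed by $g\mapsto -g$. So it suffices to find $\phi\in\mathrm{Bij}(\Gamma)$ with $g\mapsto g+\phi(g)$ also a bijection. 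Replacing $\phi$ by $g\mapsto\phi(g)-\phi(0)$ preserves both properties and forces $\phi(0)=0$, hence $\varphi(0)=0$; this accounts for the last sentence of the statement.

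Second, I would use that such maps multiply over direct products: if $\phi_A,\phi_B$ work for $A,B$ (and fix $0$), then $\phi_A\times\phi_B$ works for $A\times B$, because $\mathrm{id}+(\phi_A\times\phi_B)=(\mathrm{id}+\phi_A)\times(\mathrm{id}+\phi_B)$. For the odd part $H$, take $\phi_H=\mathrm{id}_H$: as $\gcd(2,|H|)=1$, the doubling map $y\mapsto 2y=y+\phi_H(y)$ is a bijection of $H$, and it fixes $0$. For $L$: by the formula $|I(\Gamma)|=2^e-1$, where $e$ is the number of even cyclic factors in the factorization of $\Gamma$ (which here are exactly the factors of $L$), the hypothesis $|I(\Gamma)|\neq 1$ gives $e\neq 1$, i.e.\ $L$ is trivial or non-cyclic. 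If $L$ is trivial we are done; otherwise $L\cong\zet_{2^{a_1}}\times\cdots\times\zet_{2^{a_r}}$ with $r\ge 2$, and it remains to build $\phi_L$ on a non-cyclic finite abelian $2$-group.

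For $L$ elementary abelian, $L\cong(\zet_2)^r$ with $r\ge 2$, I would take $\phi_L$ linear over $\mathbb{F}_2$ with matrix the companion matrix $A$ of an irreducible polynomial $p$ of degree $r$: then $A$ is invertible ($p(0)\ne 0$) and $I+A$ is invertible ($p(1)\ne 0$, since $p$ is irreducible of degree $\ge 2$), so $\phi_L$ works and fixes $0$. The same linear recipe works when all exponents coincide, $L\cong(\zet_{2^a})^r$, since a $\zet$-linear endomorphism of $(\zet_{2^a})^r$ is a bijection iff its reduction modulo $2$ is. The remaining, and genuinely delicate, case is unequal exponents, e.g.\ $\zet_{2^a}\times\zet_{2^b}$ with $a<b$: here no $\zet$-linear $\phi_L$ can work, because for $\phi_L$ to be a well-defined homomorphism the component sending the generator of the smaller cyclic factor into the larger one must vanish modulo $2$, so a bijective $M$ is triangular with unit diagonal modulo $2$ and $I+M$ is then strictly triangular, hence singular. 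In this case I would either invoke the classical theorem of Paige — the abelian instance of the Hall--Paige condition, that a finite abelian group has a complete mapping iff its Sylow $2$-subgroup is trivial or non-cyclic — or give an explicit, necessarily non-linear, construction by induction on the number of factors, with a small base case such as $\zet_{2^a}\times\zet_{2^b}$ handled directly. I expect that base case — an ad hoc, case-based construction with no linear shortcut — to be the main obstacle; everything else is routine bookkeeping.
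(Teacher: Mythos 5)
The paper does not actually prove this lemma: it is imported verbatim from \cite{CicZ,Zeng}, and the surrounding text only records the definition of a complete mapping, which is exactly the notion your argument pivots on. So there is no in-paper proof to compare against; the relevant comparison is with the cited sources, which (like you) ultimately rest on Paige's theorem that a finite Abelian group admits a complete mapping if and only if its Sylow $2$-subgroup is trivial or non-cyclic, equivalently $\sum_{g\in\Gamma}g=0$, equivalently $|I(\Gamma)|\neq 1$.

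Your reductions are all correct: setting $\varphi(g)=-(g+\phi(g))$ turns the two-bijection statement into the existence of a single complete mapping; translating by $\phi(0)$ gives the normalization; complete mappings multiply over direct products; the identity works on the odd part because doubling is invertible there; $|I(\Gamma)|=2^e-1\neq 1$ forces $e\neq 1$; and the companion-matrix argument disposes of $(\zet_{2^a})^r$. Your observation that no group homomorphism can be a complete mapping of $\zet_{2^a}\times\zet_{2^b}$ with $a<b$ is also correct and correctly locates the crux. But that crux is then not resolved: you offer a choice between citing Paige and an unspecified induction whose base case you admit you have not constructed. If citing Paige is admissible, the entire proof collapses to two lines (take a complete mapping $\theta$ of $\Gamma$, set $\phi=\theta$ and $\varphi=-(\mathrm{id}+\theta)$, then normalize at $0$), and the intermediate case analysis becomes unnecessary; if a self-contained proof is intended, the non-linear construction for mixed-exponent $2$-groups --- already nontrivial for $\zet_2\times\zet_4$ --- is the real content of the lemma and is missing. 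The write-up should commit to one of these two routes; as it stands, the hard case is flagged but not proved.
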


In the following, we will consider integer partitions $\{m_i\}_{i=1}^t$ of $m-1$ for some positive integer $t$ with $m_i \geq 3$ for every $1 \leq i \leq t$. Given such a sequence, since $m_i\geq 3$ for every $1 \leq i \leq t$, we can modify the sequence $\{m_i\}_{i=1}^t$ by subdividing each term larger than $5$ into a combination of terms $3$, $4$, and $5$. It is easy to check that if the non-zero elements of a group $\Gamma$ can be partitioned into zero-sum subsets of cardinalities corresponding to the elements of the new sequence, then the same holds for the old sequence. So we can focus only on sequences with $m_i \in \{3, 4, 5\}$ for every $1 \leq i \leq t$.

Let $Z$ be a subset of $\Gamma$. Let $\mathcal{K}$ be a zero-sum subset partition of $Z$ with cardinalities in $\{3,4,5\}$ and let $\alpha = |\{S \in\mathcal{K}\colon |S|=3\}|$, $\beta = |\{S \in\mathcal{K}\colon |S|=4\}|$ and $\gamma = |\{S \in\mathcal{K}\colon |S|=5\}|$. In this context, we say that $\mathcal{K}$ \textit{realizes} $(\alpha, \beta, \gamma)$ in $Z$. If there exists a family realizing $(\alpha, \beta, \gamma)$ in $Z$, we say that $(\alpha, \beta, \gamma)$ is \textit{realizable} in $Z$. Note that a triple $(\alpha, \beta, \gamma)$ can be seen as a compact representation of a sequence $\{m_i\}_{i=1}^{t}$ in which $\alpha$ elements are equal to $3$, $\beta$ elements are equal to $4$, and $\gamma$ elements are equal to $5$. Recall that the quotient group $\Gamma$ modulo $H$ for a subgroup $H$ of $\Gamma$ is denoted by $\Gamma/H$.

The following three theorems lead to the main results of the paper, which is given in Corollary~\ref{nowemain}, that states that the $4$-Zero-Sum Partition Property holds for a finite Abelian group $\Gamma$ if and only if $|I(\Gamma)|\neq 1$.

\begin{thm}
Let $\Gamma$ be a finite Abelian group such that $\Gamma\cong L\times H$, with $|L|=2^{\eta}$ for some positive integer $\eta$, $|I(L)|> 1$, and $|H|\equiv 1\pmod 6$. Then $\Gamma$ has {$3$-ZSPP.}\label{1mod6}
\end{thm}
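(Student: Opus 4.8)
The plan is to reduce the statement to realizing triples $(a,b,c)$ and then to build the partition from a Skolem partition of $H^{*}$ and a complete mapping of $L$. As observed just before the statement, it suffices to prove that every triple $(a,b,c)$ of non-negative integers with $3a+4b+5c=|\Gamma|-1$ is realizable in $\Gamma^{*}$. I would record at the outset that $\eta\geq 2$ (because $|I(L)|=2^{e}-1>1$ forces $e\geq 2$), that $|I(\Gamma)|=|I(L)|>1$ since $|H|$ is odd, and that $\sum_{g\in\Gamma}g=0$ by Lemma~\ref{involutions}. Then I would split $\Gamma^{*}=\bigl(L^{*}\times\{0\}\bigr)\sqcup\bigl(L\times H^{*}\bigr)$ and treat the two blocks separately.

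For the block $L\times H^{*}$: since $|I(H)|=0$ and $|H|\equiv1\pmod6$, Theorem~\ref{Tannenbaum1} gives a Skolem partition of $H^{*}$ into $k:=(|H|-1)/6$ good $6$-subsets $G_{1},\dots,G_{k}$, so $L\times H^{*}=\bigsqcup_{j}(L\times G_{j})$. Applying Lemma~\ref{bijection} to $L$, fix $\phi,\varphi\in$Bij$(L)$ with $\ell+\phi(\ell)+\varphi(\ell)=0$; then $\phi$ and $\ell\mapsto\ell+\phi(\ell)=-\varphi(\ell)$ are bijections of $L$. For $G_{j}=\{c,d,-c-d,-c,-d,c+d\}$ and $\ell\in L$ I would form
\[
C^{(j)}_{\ell}=\bigl\{(\ell,c),(\phi(\ell),d),(-\ell-\phi(\ell),-c-d),(-\ell,-c),(-\phi(\ell),-d),(\ell+\phi(\ell),c+d)\bigr\};
\]
with $p=(\ell,c)$ and $q=(\phi(\ell),d)$ this is the good $6$-subset $\{p,q,-p-q,-p,-q,p+q\}$ of $\Gamma$, its six elements carry the six distinct $H$-coordinates of $G_{j}$ (so $C^{(j)}_{\ell}$ is a genuine $6$-subset of $\Gamma^{*}$), and as $\ell$ ranges over $L$ these sets tile $L\times G_{j}$ because $\phi$, $\ell\mapsto\ell+\phi(\ell)$ and negation are bijections of $L$. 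Hence $L\times H^{*}$ is partitioned into $N:=2^{\eta}k$ good $6$-subsets of $\Gamma$, and $6N+(2^{\eta}-1)=|\Gamma|-1$.

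Next I would assemble an arbitrary admissible target from a flexible palette of decompositions of (small unions of) these good $6$-subsets together with $L^{*}\times\{0\}$. A single good $6$-subset $\{p,q,-p-q,-p,-q,p+q\}$ gives the two zero-sum $3$-subsets $\{p,q,-p-q\}$, $\{-p,-q,p+q\}$ ($\to(2,0,0)$). A pair gives three zero-sum $4$-subsets $\{p,-p,q,-q\}$, $\{p',-p',q',-q'\}$, $\{p+q,-p-q,p'+q',-p'-q'\}$ ($\to(0,3,0)$), or the family $\{p,q,-p-q,p',-p'\}$, $\{-p,-q,p+q\}$, $\{q',-q',p'+q',-p'-q'\}$ ($\to(1,1,1)$). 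A triple $A,B,C$ gives $\{p_{A},q_{A},-p_{A}-q_{A},p_{B},-p_{B}\}$, $\{-p_{A},-q_{A},p_{A}+q_{A},p_{C},-p_{C}\}$, $\{q_{B},-q_{B},p_{B}+q_{B},-p_{B}-q_{B}\}$, $\{q_{C},-q_{C},p_{C}+q_{C},-p_{C}-q_{C}\}$ ($\to(0,2,2)$), and of course also $(6,0,0),(2,3,0),(3,1,1)$. Finally $L^{*}\times\{0\}\cong L^{*}$ realizes every $(a_{0},b_{0},c_{0})$ with $3a_{0}+4b_{0}+5c_{0}=2^{\eta}-1$ by Theorem~\ref{SK}. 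The remaining task is the arithmetic: to show that for every admissible $(a,b,c)$ one can sort the $N$ good $6$-subsets into singletons, pairs and triples decomposed by this palette and choose $(a_{0},b_{0},c_{0})$ on $L^{*}\times\{0\}$ so that the contributions total $(a,b,c)$.

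The hard part is exactly this last bookkeeping. The palette is rich in $3$-parts but yields comparatively few $5$-parts, so targets with $c$ near $(|\Gamma|-1)/5$ are delicate and force splittings that process several good $6$-subsets at once, possibly also combining columns $L\times\{h\}$ from different $G_{j}$ (using that $\{h,-h\}$-pairs and $\{h,h',-h-h'\}$-triples of $H$-coordinates sum to $0$). Moreover, when $|L|=4$ the reservoir $L^{*}\times\{0\}$ realizes only $(1,0,0)$ and is too inflexible, so for small $|L|$ one cannot keep the two blocks separate and must merge the involutions of $\Gamma$ with parts of the good-$6$-subset structure — for instance, three involutions together with a pair $(\ell,h),(\ell,-h)$ form a zero-sum $5$-subset. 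I expect the argument to split into cases on $(a,b,c)$ according to the requested number of $4$- and $5$-parts, with a few residual small instances verified by direct computation, in the spirit of the tabulated base cases $\bigl((\zet_{2})^{3}\times\zet_{3}\bigr)^{*}$ and $\bigl((\zet_{2})^{3}\times\zet_{5}\bigr)^{*}$.
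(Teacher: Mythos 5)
Your setup is exactly the paper's: the same reduction to triples $(a,b,c)$ with parts in $\{3,4,5\}$, the same splitting $\Gamma^*=\bigl(L^*\times\{0\}\bigr)\sqcup\bigl(L\times H^*\bigr)$, the same tiling of $L\times H^*$ into good $6$-subsets via a Skolem partition of $H^*$ (Theorem~\ref{Tannenbaum1}) and a complete mapping of $L$ (Lemma~\ref{bijection}), with $L^*\times\{0\}$ handled by Theorem~\ref{SK}. All of that is correct. But the proof stops precisely where the paper's proof begins in earnest: you write ``the remaining task is the arithmetic'' and ``the hard part is exactly this last bookkeeping,'' and then only predict that a case analysis should work. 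That bookkeeping is not routine and is the entire content of the paper's argument. Concretely, after ordering the parts and locating the index $l$ where the cumulative sum crosses $|L^*|$, the leftover $r_l'=|L^*|-\sum_{i<l}r_i$ can equal $1$ or $2$, and the overflow $r_l''$ can equal $1$; in each of these situations neither block can absorb the fragment on its own, and one must re-balance by exchanging $3$-, $4$- and $5$-parts between the two blocks subject to the constraints $|L^*|\equiv 3\pmod 4$, $|W|\equiv 0\pmod{12}$, and the limited supply of $4$- and $5$-parts. The paper needs three cases with several sub-cases each, and in the extremal sub-cases (e.g.\ $\beta=0$ with $|L^*|=7$, i.e.\ $L\cong(\zet_2)^3$) it must abandon the clean two-block separation and exhibit explicit zero-sum $3$- and $5$-subsets that straddle $L^*\times\{0\}$ and $L\times G_1$. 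You correctly anticipate that such merging will be forced for small $|L|$, but anticipating it is not doing it; without these constructions and without verifying that your palette of $(2,0,0)$, $(0,3,0)$, $(1,1,1)$, $(0,2,2)$, \dots\ decompositions can always be assembled to hit an arbitrary admissible $(a,b,c)$ — in particular when $c$ is maximal or when $b=c=0$ — the claim is not established. One smaller point: you could have disposed of $|I(L)|=3$ immediately via Theorem~\ref{Zeng} (as the paper does, reducing to $|I(L)|\geq 7$), rather than flagging $|L|=4$ as a problematic case; as written your argument leaves even that reduction implicit.
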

\begin{proof}
By Theorem~\ref{Zeng}, we can assume that $I(L)\geq 7$ and, by Theorem~\ref{SK}, that $|H|>1$.

By Theorem~\ref{Tannenbaum1}, there exists a partition of $H^*$ into good $6$-subsets:
$$H = \{0\}\cup\bigcup\limits_{i=1}^{|H^*|/6}\{b_i, c_i,-b_i - c_i,-b_i,-c_i, b_i + c_i\}.$$

By Theorem~\ref{SK}, $L$ has $3$-ZSPP and  there exist $\phi,\varphi\in$Bij$(L)$ such that $a+\phi(a)+\varphi(a)=0$ for every $a\in L$ by Lemma~\ref{bijection}. Thus
$$\Gamma^* = \bigcup \limits_{a\in L^*}\{(a,0)\}$$ $$\cup \bigcup \limits_{a\in L}\bigcup\limits_{i=1}^{|H^*|/6}\{(a,b_i),(\phi(a), c_i),(\varphi(a),-b_i-c_i),(-a,-b_i),(-\phi(a),-c_i),(-\varphi(a), b_i+c_i)\},$$
where the $6$-subsets are good. Let $M^*=\cup_{a\in L^*}\{(a,0)\}$,  note that $M^* \cong L^*$, thus it has $3$-ZSPP. Let 
$$W=\bigcup \limits_{a\in L}\bigcup\limits_{i=1}^{|H^*|/6}\{(a,b_i),(\phi(a), c_i),(\varphi(a),-b_i-c_i),(-a,-b_i),(-\phi(a),-c_i),(-\varphi(a), b_i+c_i)\}.$$

 We will prove that any triple $(\alpha,\beta,\gamma)$ such that $3\alpha+4\beta+5\gamma=|\Gamma|-1$ is realizable in $\Gamma^*$. 
 Assume that $r_1,\dots,r_{\alpha}=3$, $r_{\alpha+1},\ldots,r_{\alpha+\gamma}=5$, and $r_{\alpha+\gamma+1},\dots,r_{\alpha+\beta+\gamma}=4$. Let $l$ be such that $\sum_{i=1}^{l-1} r_i\leq|L^*|$ and $\sum_{i=1}^{l} r_i> |L^*|$.
Let $r_{l}'=|L^*|- \sum_{i=1}^{l-1} r_i$ and $r_l''=r_l-r_l'$. 
If ($r_l'=0$ or $r_l'\geq 3$) and   $r_l''\geq 2$, then the sequence
 $r_1,\ldots,r_{l-1},r_{l}'$ is realized by a zero-sum partition $A_1,\ldots,A_{l-1},A_{l}'$ of $M^*$ by Theorem~\ref{SK},
 and the sequence $r_{l}'',r_{l+1},r_{l+2},\ldots,r_{\alpha+\beta+\gamma}$ is realized by a zero-sum partition
  $A_{l}'',A_{l+1},A_{l+2},\ldots,A_{\alpha+\beta+\gamma}$ of $W$ since $W$ is the union of good subsets and $|W|=\sum_{i=l+1}^{\alpha+\beta+\gamma}r_i+r_l''$. So we are done. 
Hence we have to settle the cases where $r''_l=1$ or $r'_l=1$ or $r'_l=2$.

\textit{Case 1}  $r_l''=1$.
 
Then $l$ is even and $r_{l+1}$ is odd, since $|L^*|$ is odd and $|W|$ is even (thus $|W|-r''_l$ requires at least one odd term among $r_{l+1},\dots,r_{\alpha+\beta+\gamma}$, and all odd terms are put at the beginning of the sequence).
Suppose first $r_l=3$, then $r_{l}'=2$ and, since $|I(L)|\geq 7$ and $|L|$ is even, there is $|L^*|>7$. This implies that $r_{l-1}=r_{l-2}=3$. Note that, because $|W|\equiv0\pmod 3$, there has to be $\gamma\geq 1$ or $\beta\geq 2$. If  $\gamma\geq 1$, then $(l-2,0,1)$ is realizable in $M^*$ and $(\alpha-l+2,\beta,\gamma-1)$ is realizable in $W$. For $\beta\geq 2$, the triple $(l-3,2,0)$ is realizable in $M^*$ and $(\alpha-l+3,\beta-2,\gamma)$ is realizable in $W$.
Suppose that $r_l=5$. If $\beta>0$, then the triple $(\alpha,1,l-\alpha-1)$ is realizable in $M^*$ and $(0,\beta-1,\gamma+\alpha-l+1)$ is realizable in $W$.

Assume now that $\beta=0$. If $|L^*|>7$, then, for $\alpha\geq 2$, the triple $(\alpha-2,0,l-\alpha+1)$ is realizable in $M^*$ and $(2,0,\gamma+\alpha-l-1)$ is realizable in $W$. For $\alpha\leq1$, there is $r_{l-1}=5$, and $(\alpha+3,0,l-\alpha-2)$ is realizable in $M^*$ and $(1,2,\gamma+\alpha-l-2)$ is realizable in $W$.
Observe that, in this case, by the construction of $W$, the two zero-sum $4$-subsets in $W$ can be split into four zero-sum $2$-subsets and we obtain a realization of $(\alpha,\beta,\gamma)$ in $\Gamma^*$. If $|L^*|=7$, then $\alpha=1$ and $L\cong(\zet_2)^3$, which implies that {$|\Gamma|=8|H|= 4+5\gamma$.} Therefore, $\Gamma \cong (\zet_2)^3\times H$ with $|H|\geq13$, $\gamma=(|\Gamma|-4)/5=(8|H|-4)/5\geq 20$.   
Take the following three zero-sum $5$-subsets:
\begin{center}
$\{(1,0,0,0), \ \ (0,1,0,0) , \ \ (1,1,1,0) , \ \ (0,0,1,b_1), \ \ (0,0,0,-b_1)\}$,

$\{(1,1,0,0) , \ \ (1,0,1,0) , \ \ (0,1,1,0) , \ \ (1,0,0,b_1) , \ \ (1,0,0,-b_1)\}$, 

$\{(0,0,1,0) , \ \ (0,0,1,-b_1), \ \ (0,0,0, b_1), \ \ (0,1,0,b_1) , \ \ (0,1,0,-b_1)\}$.
\end{center}

They cover $M^*$ and four inverse pairs of elements from $W$.
 $W'$ obtained by the deletion of these 15 elements from $W$ is of the following form (let $i_0=(0,0,0)$, $i_1=(1,0,0)$, $i_2=(0,1,0)$ and $i_3=(0,0,1)$):
$$W'=\bigcup \limits_{a\in\{i_0,i_1,i_2,i_3\}}\{(\phi(a),c_1),(\varphi(a),-b_1-c_1),(-\phi(a),-c_1), (-\varphi(a),b_1+c_1)\}$$
$$\cup\bigcup\limits_{a\in (\zet_2)^3\setminus\{i_0,i_1,i_2,i_3\}}\{(a,b_1),(\phi(a), c_1),(\varphi(a),-b_1-c_1),(-a,-b_1),(-\phi(a),-c_1),(-\varphi(a), b_1+c_1)\}$$
$$\cup\bigcup\limits_{a\in (\zet_2)^3}\bigcup\limits_{i=2}^{|H^*|/6}\{(a,b_i),(\phi(a), c_i),(\varphi(a),-b_i-c_i),(-a,-b_i),(-\phi(a),-c_i),(-\varphi(a), b_i+c_i)\}.$$
Note there are $4\frac{|H|-4}{3}>\gamma$ zero-sum $6$-subsets in $W'$, thus we are able to find $(\gamma-2)$ zero-sum 3-subsets in $W'$. The remaining elements of $W'$ form zero-sum $2$-subsets.
Thus we can acquire a realization of $(1,0,\gamma)$ in $\Gamma^*$.

 \textit{Case 2}  $r_l'=2$.
 
If $r_l=3$, then we are done as in Case 1. Therefore, we can assume that $r_l=5$ (for $r_l=4$, we get a contradiction since $|W|$ is divisible by $4$). For $\alpha\geq1$, we obtain that $(\alpha-1,0,l-\alpha)$ is realizable in $M^*$ and $(1,\beta,\gamma+\alpha-l)$ is realizable in $W$. For $\beta\geq 1$, since $|L^*|\geq7$, there exists a realization of $(1,1,l-2)$ in $M^*$ and a realization of $(1,\beta,\gamma-l)$ in $W$. By the construction of $W$, one zero-sum $4$-subset in $W$ can be split into two zero-sum $2$-subsets and we obtain a realization of $(0,\beta,\gamma)$ in $\Gamma^*$.
The only missing case is $\alpha=\beta=0$. If $|L^*|>7$, then actually $|L^*|\geq 15$ and $r_{l-1}=r_{l-2}=5$. Thus $(4,0,l-3)$ is realizable in $M^*$. Since $W$ is a {union} of good $6$-subsets, there exists a partition into four zero-sum $2$-subsets and $(\gamma-l-1)$ zero-sum $5$-subsets.

Assume now that $|L^*|=7= |I(L)|$. But then, as in Case 1, there is $\Gamma \cong (\zet_2)^3\times H$ and we proceed the same way.

 \textit{Case 3.}  $r_l'=1$.
 
Then $l> 1$, and $r_l''\geq 2$ is even. 
Assume first that $r_l=3$. Since $|W|\equiv 0\pmod 3$, there is $\beta+\gamma>0$.
If now $\beta>0$, then there exists a realization of $(l-2,1,0)$ in $M^*$ and a realization of $(\alpha-l+2,\beta-1,\gamma)$ in $W$. Therefore, we can assume that $\beta=0$. If now $\gamma=1$, then  $|W|\equiv 1\pmod 3$, a contradiction. Thus, for $\beta=0$, there is $\gamma\geq 2$. If $|L^*|>7$, then $|L^*|\geq 15$ and $r_{l-1}=r_{l-2}=r_{l-3}=3$ and   $(l-4,0,2)$ is realizable in $M^*$, and $(\alpha-l+4,0,\gamma-2)$ is realizable in $W$. For $|L^*|=7= |I(L)|$, there is $\Gamma \cong (\zet_2)^3\times H$.
The set 
$$\smashoperator[r]{\bigcup\limits_{a\in ((\zet_2)^3)^*}}\{(a,0)\} \cup \smashoperator{\bigcup\limits_{a\in (\zet_2)^3}}\{(a,b_1),(\phi(a), c_1),(\varphi(a),-b_1-c_1),(-a,-b_1),(-\phi(a),-c_1),(-\varphi(a), b_1+c_1)\}$$
can be partitioned into $15$ zero-sum $3$-subsets and $2$ zero-sum $5$-subsets in the following way:
$$\{(0, 0, 1, 0),\\ (0, 0, 0, b_1),\\ (0, 0, 1, -b_1)\},\;\;\;\{(1, 0, 0, 0),\\ (0, 0, 1, b_1), \\(1, 0, 1, -b_1)\},$$
$$\{(1, 1, 1, 0),\\ (0, 1, 1, b_1),\\ (1, 0, 0, -b_1)\},\;\;\;\{(0, 1, 0, b_1),\\ (0, 0, 0, c_1), \\(0, 1, 0, -b_1-c_1)\},$$
$$\{(1, 0, 1, b_1),\\ (0, 0, 1, c_1),\\ (1, 0, 0, -b_1-c_1)\},\;\;\;\{(0, 1, 0, 0),\\ (0, 1, 0, c_1),\\ (0, 0, 0, -c_1)\},$$
$$\{(0, 0, 0, b_1+c_1),\\ (0, 1, 0, -c_1), \\(0, 1, 0, -b_1)\},\;\;\;\{(0, 0, 1, b_1+c_1),\\ (0, 0, 1, -c_1),\\ (0, 0, 0, -b_1)\},$$
$$\{(0, 1, 0, b_1+c_1),\\ (1, 0, 0, -c_1),\\ (1, 1, 0, -b_1)\},\;\;\;\{(1, 0, 0, b_1+c_1),\\ (0, 1, 1, -c_1),\\ (1, 1, 1, -b_1)\},$$
$$\{(1, 0, 0, b_1), (1, 1, 1, c_1), (0, 1, 1, -b_1-c_1)\},\;\;\;\{(1, 1, 1, b_1), (1, 1, 0, c_1), (0, 0, 1, -b_1-c_1)\},$$
$$\{(1, 1, 0, b_1), (0, 1, 1, c_1), (1, 0, 1, -b_1-c_1)\},\;\;\;\{(1, 0, 1, b_1+c_1),\\ (1, 1, 0, -c_1),\\ (0, 1, 1, -b_1)\},$$
$$\{(0, 1, 1, 0),\\ (1, 0, 0, c_1),\\ (1, 1, 1, -c_1)\},$$
$$\{(1, 0, 1, 0),\\ (1, 0, 1, c_1),\\ (0, 1, 1, b_1+c_1),\\ (1, 1, 0, -b_1-c_1), (1, 0, 1, -c_1)\},$$
$$\{(1, 1, 0, 0),\\ (1, 1, 0, b_1+c_1),\\ (1, 1, 1, b_1+c_1),\\ (0, 0, 0, -b_1-c_1), (1, 1, 1, -b_1-c_1)\}.$$
The cases $(10,0,5)$ and $(5,0,8)$ for $(\zet_2)^3\times\zet_7$ were analyzed by a computer program we created, sample realizations can be found in the annexes. Thus we can assume that $|H|\geq 13$ and the nonempty
set $\cup_{a\in (\zet_2)^3}\cup_{i=2}^{|H^*|/6}\{(a,b_i),(\phi(a), c_i),(\varphi(a),-b_i-c_i),(-a,-b_i),(-\phi(a),-c_i),(-\varphi(a), b_i+c_i)\}$ is a union of good $6$-subsets. Hence we obtain that $(\alpha,0,\gamma)$ is realizable in $\Gamma^*$.

Suppose now that $r_l=5$. If $r_{l-1}=5$, then let $r_{l-1}'=r_{l-1}-2\geq3$ and
 $r_{l-1}''=2$. Let us re-define $r_l':=3$ and $r_{l}'':=r_l-3$.
The sequence $r_1,\ldots,r_{l-2},r_{l-1}',r_l'$ is realized by a zero-sum partition $A_1,\ldots,A_{l-2},A_{l-1}',A_l'$ of $M^*$ by Theorem~\ref{SK}, and the sequence
  $r_{l-1}'',r_l'',r_{l+1},\ldots,r_{\alpha+\beta+\gamma}$ is realized by a zero-sum partition $A_{l-1}'',A_l'', A_{l+1},\ldots,A_{\alpha+\beta+\gamma}$ of $W$. Thus $r_{l-1}=3$. If now $\beta>0$, then there exists a realization of $(\alpha-1,1,0)$ in $M^*$ and a realization of $(1,\beta-1,\gamma)$ in $W$. If $|L^*|>7$, then $|L^*|\geq 15$ and $r_{l-1}=r_{l-2}=r_{l-3}=3$. Since $\beta=0$ and $\gamma\geq 2$, we obtain that $(\alpha-3,0,2)$ is realizable in $M^*$, and $(3,0,\gamma-2)$ is realizable in $W$. Let $|L^*|=7= |I(L)|$. Then $L\cong (\zet_2)^3$, $\alpha=2$ and we proceed as above to obtain a realization of the sequence $(2,0,\gamma)$.
\end{proof}

We will need now the following lemma~\citep[Lemma 2.4]{CicrSuch}:
\begin{lem}[\citep{CicrSuch}]\label{l3.12} {Let $\Gamma\cong A\times \zet_{2^{n_1}}$ be an Abelian group such that $|A|=4n$, $|I(A)|>1$, $n_1\in\{1,2\}$, and $n$ is a positive integer. Let $\alpha$, $\beta$, $\gamma$ be non-negative integers with $3\alpha+4\beta+5\gamma=|\Gamma|-1$ and $\beta\geq (2^{n_1}-1)n$. If there exists a subset partition $\mathcal{K}$ realizing $(\alpha, \beta-(2^{n_1}-1)n, \gamma)$ in $A^{*}$, then $(\alpha, \beta, \gamma)$ is realizable in $\Gamma^{*}$. } \end{lem}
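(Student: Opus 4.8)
The plan is to realize $(a,b,c)$ in $\Gamma^* \cong (A \times \zet_{2^{n_1}})^*$ by starting from a subset partition $\mathcal{K}$ realizing $(a, b-(2^{n_1}-1)n, c)$ in $A^*$ and inflating it using the extra coordinate in $\zet_{2^{n_1}}$. First I would fix a complete-mapping-type structure on $\zet_{2^{n_1}}$: since $n_1 \in \{1,2\}$, the group $\zet_{2^{n_1}}$ has more than one involution only in the trivial sense we don't need; instead, the point is that $\zet_{2^{n_1}}^* $ together with $0$ can be organized so that each coset $\{a\} \times \zet_{2^{n_1}}$, for $a \in A^*$, can be split into $2^{n_1}-1$ zero-sum $4$-subsets after combining with a matching partner — more precisely, I would pair up elements $a$ and $-a$ of $A$ (note $a \ne -a$ is not guaranteed, so involutions of $A$ need separate care) and, on $\{a,-a\} \times \zet_{2^{n_1}}$, exhibit $2(2^{n_1}-1)$ elements split into zero-sum $4$-subsets of the form $\{(a,x),(-a,-x),(a,y),(-a,-y)\}$ or, when $n_1=2$, using $\{(a,1),(a,1)\}$-type cancellation via $2\cdot 1 = 2$ in $\zet_4$. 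The residue $(a,0)$ and $(-a,0)$ are the ones that remain and will carry the "old" partition $\mathcal{K}$ lifted to $A \times \{0\} \cong A$.

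Concretely, the key steps in order: (1) Identify the copy $A \times \{0\}$ inside $\Gamma$; it is isomorphic to $A$, so the given partition $\mathcal{K}$ realizing $(a, b-(2^{n_1}-1)n, c)$ lifts verbatim to a zero-sum partition of $(A\times\{0\})^*$ with those cardinalities. (2) Partition the remaining set $\Gamma^* \setminus (A \times \{0\})^* = \{(g,x) : g \in A,\ x \in \zet_{2^{n_1}}^*\}$, which has $4n \cdot (2^{n_1}-1)$ elements. (3) Show this remaining set decomposes into exactly $(2^{n_1}-1)n$ zero-sum $4$-subsets. For $n_1 = 1$ this set is $A \times \{1\}$ with $4n$ elements, and I would split it into $n$ zero-sum $4$-subsets of the form $\{(g_1,1),(g_2,1),(g_3,1),(g_4,1)\}$ with $g_1+g_2+g_3+g_4 = 0$ in $A$ and $4\cdot 1 = 0$ in $\zet_2$; such a partition of $A$ into zero-sum $4$-subsets exists because $|A| = 4n$, $|I(A)| > 1$, and $4$-ZSPP-type results (or directly Theorem~\ref{SK} when $|A|$ is a power of $2$, or a greedy pairing argument using Lemma~\ref{bijection} in general) guarantee it. For $n_1 = 2$ the remaining set is $A \times \{1,2,3\}$ with $12n$ elements, needing $3n$ zero-sum $4$-subsets; here I would use $\{(g,1),(g,3),(h,1),(h,3)\}$ (sum $2(g+h), 1+3+1+3 = 8 \equiv 0$) paired with handling of the $\times\{2\}$ layer via $\{(g,2),(-g,2),(h,2),(-h,2)\}$ or $\{(g,2),(g,2),\dots\}$ splitting, again reducing to a zero-sum $4$-partition of $A$ (or of $A \times \zet_2$, which has more than one involution). (4) Combine: the lifted $\mathcal{K}$ together with these $(2^{n_1}-1)n$ new zero-sum $4$-subsets gives a partition realizing $(a, b-(2^{n_1}-1)n + (2^{n_1}-1)n, c) = (a,b,c)$ in $\Gamma^*$.

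The main obstacle I expect is step (3): producing an explicit zero-sum $4$-subset partition of $A$ (or of the layers $A \times \{x\}$) when $A$ has involutions, since involutions $\iota$ satisfy $\iota = -\iota$ and cannot be cancelled in inverse pairs. I would handle this by treating $I(A) \cup \{0\}$, which is an elementary abelian $2$-subgroup of even order $\geq 4$, separately — it admits a zero-sum $4$-partition of its nonzero elements by Theorem~\ref{Sylow} (and $0$ is absorbed into the $A\times\{0\}$ copy or paired with a suitable layer element) — while the complement $A^* \setminus I(A)$ splits into inverse pairs $\{g,-g\}$ that combine four at a time into zero-sum $4$-subsets. A subtlety is ensuring the counts are divisible by $4$ in each piece; since $|A| = 4n$ and $|I(A)|+1$ is a power of $2$ that is $\geq 4$, a short parity/counting check settles this, possibly after moving a constant number of elements (at most $O(1)$) between the layers, which does not affect the asymptotic-free exact count because we are free to choose which coset-representative copy of $A$ receives $\mathcal{K}$.
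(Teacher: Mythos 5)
The paper itself contains no proof of this lemma --- it is imported verbatim from \cite{CicrSuch} (Lemma 2.4 there) --- so your proposal has to stand on its own. Your overall strategy is the natural and correct one, and the counting is exact: $|\Gamma^*|-|A^*|=4n(2^{n_1}-1)$, so after lifting $\mathcal{K}$ to $A\times\{0\}$ it suffices to partition $A\times\zet_{2^{n_1}}^*$ into $(2^{n_1}-1)n$ zero-sum $4$-subsets, and this is exactly where the hypothesis $|I(A)|>1$ is needed.

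Two of your concrete steps, however, fail as written. First, $I(A)\cup\{0\}$ is an elementary Abelian $2$-group of order $2^e\geq 4$, so its set of \emph{nonzero} elements has cardinality $2^e-1\equiv 3\pmod 4$ and cannot be partitioned into $4$-subsets at all; Theorem~\ref{Sylow} ($3$-ZSPP) does not deliver what you claim, and likewise ``$4$-ZSPP-type results'' cannot partition all of $A$ into $4$-subsets, since ZSPP concerns $A^*$, whose cardinality is $\equiv 3\pmod 4$. What you actually need, and what is true, is a partition of the \emph{whole} set $(I(A)\cup\{0\})\times\{x\}$ --- including $(0,x)$, which is a non-identity element of $\Gamma$ that must be covered by the layer partition, not ``absorbed into $A\times\{0\}$'' --- into zero-sum $4$-subsets: take the cosets of any Klein four-subgroup $\{0,u,v,u+v\}$ of $I(A)\cup\{0\}$; each coset $\{w,w+u,w+v,w+u+v\}$ sums to $4w+2u+2v=0$. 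With this fix, every layer $A\times\{x\}$ with $x\neq 0$ splits into exactly $n$ zero-sum $4$-subsets (namely $2^{e-2}$ Klein cosets plus $(4n-2^e)/4$ quadruples formed by merging inverse pairs from $A\setminus(I(A)\cup\{0\})$ two at a time, where $4n-2^e\equiv 0\pmod 4$ because $e\geq 2$, and the second coordinates sum to $4x=0$ in $\zet_{2^{n_1}}$ since $n_1\leq 2$); doing this in each of the $2^{n_1}-1$ nonzero layers gives all $(2^{n_1}-1)n$ required $4$-subsets, with no cross-layer constructions and no ``moving of elements between layers''. Second, your $n_1=2$ quadruple $\{(g,1),(g,3),(h,1),(h,3)\}$ has first-coordinate sum $2(g+h)$, which is not $0$ unless $g+h\in I(A)\cup\{0\}$; if you do want a cross-layer construction, use $\{(g,1),(-g,3),(h,1),(-h,3)\}$ instead, which is zero-sum for all $g,h$. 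With these corrections the argument is complete.
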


\begin{thm}
Let $\Gamma$ be a finite Abelian group such that $\Gamma=L\times H$ such that  $|L|=2^{\eta}$ for some natural number $\eta$, $|I(L)|> 1$ and $|H|\equiv 3\pmod 6$. Then $\Gamma$ has {the $3$-Zero-Sum Partition Property.}\label{3mod6}
\end{thm}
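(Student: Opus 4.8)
The plan is to mirror the proof of Theorem~\ref{1mod6}; the only structural novelty is that a Skolem partition of $H^*$ now carries one extra zero-sum $2$-subset, whose lift into $\Gamma$ has to be absorbed. By Theorem~\ref{Zeng} I may assume $|I(L)|\geq 7$ (if $|I(L)|=3$ then, $|H|$ being odd, $|I(\Gamma)|=3$, so $\Gamma$ already has $2$-ZSPP), hence $|L|=2^{\eta}$ with $\eta\geq 3$; and, as in the discussion preceding Theorem~\ref{1mod6}, it suffices to realize in $\Gamma^*$ every triple $(\alpha,\beta,\gamma)$ with $3\alpha+4\beta+5\gamma=|\Gamma|-1$. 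Write $|H|=6k+3$. By Theorem~\ref{Tannenbaum1} there are $b_i,c_i\in H$ ($1\le i\le k$) and $d\in H^*$ with $d\neq -d$ such that $H=\{0\}\cup\{d,-d\}\cup\bigcup_{i=1}^{k}\{b_i,c_i,-b_i-c_i,-b_i,-c_i,b_i+c_i\}$. Fixing $\phi,\varphi\in\mathrm{Bij}(L)$ with $a+\phi(a)+\varphi(a)=0$ for all $a$ and $\phi(0)=\varphi(0)=0$ (Lemma~\ref{bijection}), and lifting the good $6$-subsets of $H^*$ along $(\mathrm{id},\phi,\varphi)$ exactly as in Theorem~\ref{1mod6}, we obtain $\Gamma^*=M^*\cup D\cup W$, where $M^*=\{(a,0)\colon a\in L^*\}\cong L^*$ has $3$-ZSPP by Theorem~\ref{SK}, $W$ is a disjoint union of good $6$-subsets of $\Gamma$ (so $W$ realizes every triple of total $|W|$, exactly as in Theorem~\ref{1mod6}), and $D=L\times\{d,-d\}$, of size $2^{\eta+1}$, is the lift of $\{d,-d\}$.

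The heart of the matter is the set $M^*\cup D$, of cardinality $3\cdot 2^{\eta}-1$. On one hand, pairing $L$ into $2^{\eta-1}$ arbitrary pairs $\{a,a'\}$ turns $D$ into $2^{\eta-1}$ zero-sum $4$-subsets $\{(a,d),(a',d),(-a,-d),(-a',-d)\}$. On the other hand, since $a\mapsto a+\phi(a)=-\varphi(a)$ is a bijection of $L$ fixing $0$, the set $D$ splits into the zero-sum $2$-subset $\{(0,d),(0,-d)\}$ and $2^{\eta}-1$ ``tokens'' $\{(a,d),(\phi(a),-d)\}$, $a\in L^*$, one for each value $a+\phi(a)$ in $L^*$; a token of value $v$ together with one, two or three elements of $M^*$ whose $L$-coordinates sum to $-v$ forms a zero-sum $3$-, $4$- or $5$-subset. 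Combining this with the $3$-ZSPP of $M^*\cong L^*$, one proves that for $|L|\geq 16$ every triple $(\alpha',\beta',\gamma')$ with $3\alpha'+4\beta'+5\gamma'=3\cdot 2^{\eta}-1$ is realizable in $M^*\cup D$, while for $L\cong(\zet_2)^3$ (where $|M^*\cup D|=23$) this is exactly the realizability of all such triples in $((\zet_2)^3\times\zet_3)^*$, which is verified by computer and recorded in the Annex. Alternatively, when $|L|$ is large one can peel a $\zet_2$- or $\zet_4$-direct summand off $\Gamma$ by Lemma~\ref{l3.12} and recurse on $\eta$, treating the residual case of too few $4$-parts via the token construction.

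To realize an arbitrary $(\alpha,\beta,\gamma)$ with $3\alpha+4\beta+5\gamma=|\Gamma|-1$ in $\Gamma^*$, one then argues as in Theorem~\ref{1mod6}: order the parts as $3$'s, then $5$'s, then $4$'s; let $l$ be the index at which the partial sums first exceed $|M^*\cup D|=3\cdot 2^{\eta}-1$; and split $r_l=r_l'+r_l''$ so that $\sum_{i<l}r_i+r_l'=3\cdot 2^{\eta}-1$. If $r_l'\in\{0,3,4,5\}$ and $r_l''\in\{0,2,3,4,5\}$, then realize $r_1,\dots,r_{l-1},r_l'$ in $M^*\cup D$ and $r_l'',r_{l+1},\dots$ in $W$, and stop. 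The remaining possibilities $r_l''=1$, $r_l'=2$, $r_l'=1$ are disposed of by moving a bounded number of parts across the boundary, exactly as in Cases~1--3 of Theorem~\ref{1mod6}; the extra supply of $3$-, $4$- and $5$-subsets coming from $D$ only eases these adjustments, and the single delicate subcase $|L^*|=7$, that is $L\cong(\zet_2)^3$, is again handled with the base partition of $((\zet_2)^3\times\zet_3)^*$ and the elements $b_i,c_i$ of $W$, in the style of the explicit constructions in Theorem~\ref{1mod6}.

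The hard part will be the middle step: showing that $M^*\cup D$ realizes every triple of total $3\cdot 2^{\eta}-1$. Note that $M^*\cup D$ equals $(L\times\zet_3)^*$ only when the chosen $d$ has order $3$, which cannot be arranged when the Sylow $3$-subgroup of $H$ is cyclic of order at least $9$; so one genuinely needs the token decomposition built from the bijection $a\mapsto a+\phi(a)$, together with the computer-checked base case $((\zet_2)^3\times\zet_3)^*$, above all when few $4$-parts are available. Once this is secured, the boundary bookkeeping, though lengthy, is essentially a transcription of the argument already carried out for Theorem~\ref{1mod6}.
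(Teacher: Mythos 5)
Your decomposition $\Gamma^*=M^*\cup D\cup W$ is a reasonable starting point, but the proposal has a genuine gap exactly where you yourself locate ``the hard part'': the claim that every triple $(\alpha',\beta',\gamma')$ with $3\alpha'+4\beta'+5\gamma'=3\cdot2^{\eta}-1$ is realizable in $M^*\cup D$ is asserted, not proven. The token construction and the pairing of $D$ into zero-sum $4$-subsets are plausible tools, but no argument is given that they suffice for all triples --- in particular for triples with $\beta'=0$ and many $5$-parts, where most $5$-subsets must absorb four elements of $D$ and one of $M^*$, which amounts to partitioning $L\times\{d\}$ and $L\times\{-d\}$ into pairs with prescribed coupled sums. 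Worse, the proposed fallback for $L\cong(\zet_2)^3$ --- that realizability in $M^*\cup D$ ``is exactly'' realizability in $\left((\zet_2)^3\times\zet_3\right)^*$, verified in the Annex --- is false unless $d$ has order $3$: if $d$ has order at least $7$, every zero-sum subset of size at most $5$ must use equally many elements of $L\times\{d\}$ and $L\times\{-d\}$, a strictly stronger constraint than in $L\times\zet_3$ (where three copies of $d$ also sum to zero), so the computer-checked partitions do not transfer. You concede this in your final paragraph, which leaves both the base case and the general case of your key lemma open. Note also that when $|H|=3$ we have $W=\emptyset$, so your ``middle step'' is the entire theorem; the paper's proof of that case requires a separate device (a subgroup $B$ with $\Gamma/B\cong\zet_2\times\zet_6$ and a Skolem-type choice of coset representatives, on top of the computer-verified case $(\zet_2)^3\times\zet_3$ fed into Lemma~\ref{l3.12}) that your proposal does not supply.

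It is also worth pointing out that for $|H|>3$ the paper avoids your middle step altogether. Rather than realizing arbitrary triples in $M^*\cup D$, it places exactly one $3$-subset in $M^*$ and $\alpha+\gamma-1$ $3$-subsets in $W$ (padding both with $4$-subsets), and then uses the $|L|$ zero-sum $2$-subsets of $D$ only to upgrade $\gamma$ of these $3$-subsets to $5$-subsets and to pair the leftover $2$-subsets into $4$-subsets; the complementary case $\alpha+\gamma\geq|L|$ is dispatched by lifting $\{d,-d\}$ along $(\mathrm{id},\phi,\varphi)$ into zero-sum $3$-subsets of the form $\{(a,0),(\phi(a),d),(\varphi(a),-d)\}$. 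So the lemma you would need is both harder than and different from what the paper actually proves, and as written the proposal does not constitute a proof.
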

\begin{proof}
By Theorem~\ref{Zeng}, we can assume that $|I(L)|\geq 7$. By Theorem~\ref{SK}, $L$ has $3$-ZSPP and  there are
 $\phi,\varphi\in$Bij$(L)$ such that $a+\phi(a)+\varphi(a)=0$ for every $a\in L$ and $\phi(0)=\varphi(0)=0$ by Lemma~\ref{bijection}.

\textit{Case 1.} $|H|>3$.

By Theorem~\ref{Tannenbaum1} there exists a Skolem partition of $H^*$:
$$H = \{0,b,-b\}\cup\bigcup\limits_{i=1}^{(|H^*|-2)/6}\{b_i, c_i,-b_i - c_i,-b_i,-c_i, b_i + c_i\}.$$

Let $$W= \bigcup \limits_{a\in L}\bigcup\limits_{i=1}^{(|H^*|-2)/6}\{(a,b_i),(\phi(a), c_i),(\varphi(a),-b_i-c_i),(-a,-b_i),(-\phi(a),-c_i),(-\varphi(a), b_i+c_i)\}$$ and $M^*=\cup_{a\in L^*} \{(a,0)\}$.

 Thus

$$\Gamma^* =\{(0,b),(0,-b)\} \cup \bigcup \limits_{a\in L^*}\{(a,0),(\phi(a),b),(\varphi(a),-b)\}\cup W.$$

We will prove that any triple $(\alpha,\beta,\gamma)$ such that $3\alpha+4\beta+5\gamma=|\Gamma|-1$ is realizable in $\Gamma^*$. If $\alpha+\gamma\geq |L^*|$, since  $(|L^*|,0,0)$ is realizable in $\cup_{a\in L^*}\{(a,0),(\phi(a),b),(\varphi(a),-b)\}$ and $W$ is a union of good $6$-subsets, we have that $(\alpha,\beta,\gamma)$ is realizable in $\Gamma^*$. Thus we assume that $\alpha+\gamma< |L^*|$. Observe that

$$\Gamma^* =M^*\cup \bigcup \limits_{a\in L}\{(a,b),(-a,-b)\}\cup W.$$

Since $|L^*|\equiv 3\pmod 4$, $M^*\cong L^*$,  the sequence $(1,(|L^*|-3)/4,0)$ is realizable in $M^*$. Note that $|W| = |L|(|H|-3) \geq 6|L| > 3(\alpha+\gamma)$. This implies the existence of a partition of $W$ into $(\alpha+\gamma-1)$ zero-sum 3-sets and $(|L|(|H|-3)-3(\alpha+\gamma-1))/2$ zero-sum 2-sets. Furthermore, within the set $\cup_{a\in L}\{(a,b),(-a,-b)\}$, there are $|L| > \gamma$ zero-sum 2-subsets. Hence, a realization of $(\alpha, \beta, \gamma)$ exists in $\Gamma^*$.

\textit{Case 2.} $|H|=3$.

Thus $H\cong \zet_3$ and
$$\Gamma^* =M^*\cup \bigcup \limits_{a\in L}\{(a,1),(-a,2)\}.$$

Assume first that $\Gamma^*\cong\left((\zet_2)^3\times\zet_3\right)^*$. By Theorem~\ref{Zeng}, $(\zet_2)^2\times\zet_3$ has $2$-ZSPP. The triples $(\alpha, \beta, \gamma)$ with $\beta<3$ were analyzed by a computer program we created, sample realizations can be found in the annexes. Therefore, for $A=(\zet_2)^2\times\zet_3$ and  $n_1=1$, by Lemma~\ref{l3.12}, we obtain that every triple $(\alpha, \beta, \gamma)$ is realizable in $\left((\zet_2)^3\times\zet_3\right)^*$.

We will prove now that any triple $(\alpha,\beta,\gamma)$ such that $3\alpha+4\beta+5\gamma=|\Gamma|-1$ is realizable in $\Gamma^*\not \cong \left((\zet_2)^3\times\zet_3\right)^*$.
If $3(\alpha+\gamma)\leq|L^*|$, then, for $|L^*|-3(\alpha+\gamma)\equiv0\pmod4$, the sequence $(\alpha+\gamma,(|L^*|-3(\alpha+\gamma))/4,0)$ is realizable in $M^*$ by Theorem~\ref{SK}. We will obtain $\gamma$ zero-sum $5$-subsets by taking $\gamma$ (possibly $\gamma=0$) zero-sum $2$-subsets from the set $\cup_{a\in L}\{(a,1),(-a,2)\}$. Assume now that $|L^*|-3(\alpha+\gamma)\equiv 2\pmod4$. But there is $|L^*| - 3(\alpha + \gamma) = |\Gamma^*| - 2|L| - 3(\alpha+\gamma) = 4\beta - 2|L| \equiv 0 \pmod 4$, which yields a contradiction. The only missing case is $3(\alpha+\gamma)>|L^*|$. Since $L\not \cong (\zet_{2})^3$, there exists a subgroup $B$ of $\Gamma$ such that $|B|=2^{\eta-2}=|L|/4$,, $|I(B)|>1$ and $\Gamma/B \cong \zet_2\times \zet_6$. Since $\zet_2\times \zet_6=\{(0,0),(0,2),(0,4)\}\cup\{(0,3),(1,3),(1,0)\}\cup\{\{(0,1),(1,4),(1,1),(0,5),(1,2),(1,5)\}$, we can choose a set of coset representatives for the subgroup $B$ in $\Gamma$, say $A$, such
that $$A = \{0\}\cup\{a,-a\} \cup \{ e_1,e_2,-e_1-e_2\}\cup\{c,d, -c-d,-c,-d,c+d\},$$
where $2e_1,2e_2 \in B$. Since  $|I(B)|>1$, there exist {$\phi,\varphi\in$Bij$(B)$ such that $g+\phi(g)+\varphi(g)=0$  for every $g\in B$} by Lemma~\ref{bijection}. Moreover, without loss of generality, we can assume that  $\phi(0)=\varphi(0)=0$. Note that $|B|=|\Gamma|/12=|L|/4$. If now $\alpha+\gamma< |L|/2-1$, then write  $$\begin{array}{ll}
\Gamma^* =&  B^* \cup\bigcup\limits_{\substack{g\in B}}\{ a+g,-a-g)\} \cup\bigcup\limits_{\substack{g\in B}}\{ e_1+g,e_2+\phi(g),-e_1-e_2+\varphi(g)\}\\
& \cup \bigcup\limits_{g\in B}\left\{c+g, d+\phi(g),-c-d+\varphi(g),-c-g,-d-\phi(g), c+d-\varphi(g)\right\},\end{array}$$
where the latter $6$-subsets are good.

If $|B^*|$ is divisible by $3$, then the sequence $(|B^*|/3,0,0)$ is realizable in  $B^*$ by Theorem~\ref{SK} and $(|B|,0,0)$ is realizable in  $\cup_{\substack{g\in B}}\{ e_1+g,e_2+\phi(g),-e_1-e_2+\varphi(g)\}$. Since $|B^*|/3+|B|=(|L|-1)/3<\alpha +\gamma$, we can find the remaining $\alpha +\gamma- (|L|-1)/3\leq 2|B|$ zero-sum $3$-subsets in 
$$\bigcup\limits_{g\in B}\left\{c+g, d+\phi(g),-c-d+\varphi(g),-c-g,-d-\phi(g), c+d-\varphi(g)\right\}.$$ Note that we are left now only with zero-sum $2$-subsets, thus we are done.

If $|B^*|$ is not divisible by $3$, then, since $|B^*|\equiv 3\pmod 4$ and $|B^*| = 2^{\eta-2} - 1 \not\equiv 2 \pmod 3$, there is $|B^*|\equiv 7\pmod {12}$. Moreover $\beta>1$, because $|\Gamma|\not \equiv 0 \pmod 5$ and  $\alpha+\gamma< |L|/2-1<(3|L|-1)/5=|\Gamma^*|/5$. Thus the sequence $((|B^*|-4)/3,1,0)$ is realizable in $B^*$ by Theorem~\ref{SK} and we proceed as above.
 
For $\alpha+\gamma\geq |L|/2-1$, let
$$\begin{array}{ll}
\Gamma^* =
& \{ a,-a\}\cup\bigcup\limits_{\substack{g\in B^*}}\{ a+g,\phi(g),-a+\varphi(g)\} \cup\bigcup\limits_{\substack{g\in B}}(\{ e_1+g,e_2+\phi(g),-e_1-e_2+\varphi(g)\}\\
& \cup
\bigcup\limits_{g\in B}\left\{c+g, d+\phi(g),-c-d+\varphi(g),-c-g,-d-\phi(g), c+d-\varphi(g)\right\}.\end{array}$$

The sequence $(|B|-1,0,0)$ is realizable in  $\cup_{g\in B^*}\{ a+g,\phi(g),-a+\varphi(g)\}$ and $(|B|,0,0)$ is realizable in $\cup_{g\in B}\{ e_1+g,e_2+\phi(g),-e_1-e_2+\varphi(g)\}$. Since $ |B|-1+|B|=|L|/2-1\leq\alpha +\gamma$, we can find the remaining $\alpha +\gamma- |L|/2+1$ zero-sum $3$-subsets in 
$$\bigcup\limits_{g\in B}\left\{c+g, d+\phi(g),-c-d+\varphi(g),-c-g,-d-\phi(g), c+d-\varphi(g)\right\}.$$ Note that we are left now only with zero-sum $2$-subsets, thus we are done.
\end{proof}

\begin{thm}
Let $\Gamma$ be a finite Abelian group such that $\Gamma=L\times H$ such that  $|L|=2^{\eta}$ for some natural number $\eta$, $|I(L)|> 1$, and $|H|\equiv 5\pmod 6$. Then $\Gamma$ has {the $4$-Zero-Sum Partition Property.}\label{5mod6}
\end{thm}
\begin{proof}
By Theorem~\ref{Zeng}, we can assume that $|I(L)|\geq 7$. 
By Theorem~\ref{SK}, the group $L$ has $3$-ZSPP and there exist $\phi,\varphi\in$Bij$(L)$ such that $a+\phi(a)+\varphi(a)=0$ for every $a\in L$, and $\phi(0)=\varphi(0)=0$ by Lemma~\ref{bijection}. Let $r_1+\ldots +r_t=|\Gamma|-1$.
Assume that $r_1,\dots,r_s$ are all odd and $r_{s+1},\dots,r_t$ are all even.
Then $s$ is odd because $|\Gamma|$ is even. We may restrict ourselves to sequences where $4 \leq r_i \leq 7$ for every $i$. We refine the sequence $r_1,\dots,r_s$ to $r_1'=\ldots=r_s'=3$. Note that $s< |\Gamma|/5$.
 
 \textit{Case 1.} $|H|>5$.

By Theorem~\ref{Tannenbaum1}, there exists a Skolem partition of $H^*$:
$$H = \{0,b,-b,c,-c\}\cup\bigcup\limits_{i=1}^{(|H^*|-4)/6}\{b_i, c_i,-b_i - c_i,-b_i,-c_i, b_i + c_i\}.$$

Let $$W= \bigcup \limits_{a\in L}\bigcup\limits_{i=1}^{(|H^*|-4)/6}\{(a,b_i),(\phi(a), c_i),(\varphi(a),-b_i-c_i),(-a,-b_i),(-\phi(a),-c_i),(-\varphi(a), b_i+c_i)\}$$ and $M^*=\cup_{a\in L^*} \{(a,0)\}$.
Thus

$$\Gamma^* =\{(0,b),(0,-b)\} \cup \bigcup \limits_{a\in L^*}\{(a,0),(\phi(a),b),(\varphi(a),-b)\}\cup\bigcup \limits_{a\in L}\{(a,c),(-a,-c)\}\cup W.$$

If $s \geq |L^*|$, then we can realize $|L^*|$ zero-sum $3$-subsets in $\cup_{a\in L^*}\{(a,0),(\phi(a),b),(\varphi(a),-b)\}$. Since $s<|\Gamma|/5$, the remaining $s-|L^*|$ zero-sum $3$-subsets can be realized in $W$, and the remaining elements in $\Gamma^*$ form zero-sum $2$-subsets, so we are done.



Assume now that $s<|L^*|$. Note that we can also write $\Gamma^*$ in the following way:

$$\Gamma^* =M^*\cup \bigcup \limits_{a\in L}\{(a,b),(-a,-b)\}\cup \bigcup \limits_{a\in L}\{(a,c),(-a,-c)\}\cup W.$$

Since $|L^*|\equiv 3\pmod 4$ and $L^*\cong \cup_{a\in L^*} \{(a,0)\}$,  the sequence $(1,(|L^*|-3)/4,0)$ is realizable in $M^*$ by Theorem~\ref{SK}. Observe that $|W|=|L|(|H|-5)\geq 6|L|>2(3s)$. This implies that there exists a partition of $W$ into $s-1$ zero-sum 3-sets and $(|L|(|H|-5)-3(s-1))/2$ zero-sum 2-sets. Moreover, the set $\left(\cup_{a\in L}\{(a,b),(-a,-b)\}\right)\cup \left(\cup_{a\in L}\{(a,c),(-a,-c)\}\right)$ contains $2|L|>2s$ distinct zero-sum $2$-sets. Thus there exists the desired subset partition of $\Gamma^*$.

\textit{Case 2.} $|H|=5$.

Thus $H\cong \zet_5$ and for non-zero $b,c\in\zet_5$ such that $b\neq c$, $b\neq -c$ there is
$$\Gamma^* =M^*\cup \bigcup\limits_{a\in L}\{(a,b),(-a,-b)\}\cup \bigcup\limits_{a\in L}\{(a,c),(-a,-c)\}.$$
Assume first that $\Gamma\cong\left((\zet_2)^3\times\zet_5\right)^*$. By Theorem~\ref{Zeng}, $\Gamma\cong(\zet_2)^2\times\zet_5$ has $2$-ZSPP. The triples $(\alpha, \beta, \gamma)$ with $\beta<5$ were analyzed by a computer program we created, sample realizations can be found in the annexes. Therefore, for $A=(\zet_2)^2\times\zet_5$ and  $n_1=1$, by Lemma~\ref{l3.12}, we obtain that every triple $(\alpha, \beta, \gamma)$ is realizable in $\Gamma^*\cong\left((\zet_2)^3\times\zet_5\right)^*$.

From now on, we assume that $L\not\cong (\zet_2)^3$. If $3s\leq|L^*|$, then, for $|L^*|-3s\equiv0\pmod4$, the sequence $(s,(|L^*|-3s)/4,0)$ is realizable in $M^*\cong L^*$ by Theorem~\ref{SK}. We will obtain $s$ zero-sum subsets of odd cardinality by taking zero-sum $2$-subsets from the set $\left(\cup_{a\in L}\{(a,b),(-a,-b)\}\right)\cup \left(\cup_{a\in L}\{(a,c),(-a,-c)\}\right)$. If $|L^*|-3s\equiv 2\pmod4$, then the sequence $(s-1,(|L^*|-3s-2)/4,1)$ is realizable in $ M^*$ by Theorem~\ref{SK}, and we proceed as before. 

Now consider $3s>|L^*|$. Since $L\not \cong (\zet_{2})^3$, there exists a subgroup $B$ of $\Gamma$ such that $|B|=2^{\eta-2}=|L|/4$, $|I(B)|>3$ and $\Gamma/B \cong \zet_2\times \zet_{10}$. Since $\zet_2\times \zet_{10}=\{(0,0),(0,5),(1,5),(1,0)\}\cup\{(0,2),(0,8)\}\cup\{(0,4),(0,6)\}\cup\{(0,1),(1,3),(1,6),(0,9),(1,7),(1,4)\}\cup\{(1,1),(1,2),(0,7),(1,9),(1,8),(0,3)\}$ we can choose a set of coset representatives for the subgroup $B$ in $\Gamma$, say $A$, such
that 
$$A = \{a,0,-a\}\cup\{b,-b\} \cup \{ e_1,e_2,-e_1-e_2\}\cup\bigcup_{i=1,2}\{c_i,d_i, -c_i-d_i,-c_i,-d_i,c_i+d_i\},$$
 where $2e_1,2e_2 \in B$. Since  $|I(B)|>1$, there are
 {$\phi,\varphi\in$Bij$(B)$ such that $g+\phi(g)+\varphi(g)=0$ for every $g\in B$} by Lemma~\ref{bijection}. Moreover, without loss of generality, we can assume that  $\phi(0)=\varphi(0)=0$.
 If now $s< |L|/2-1$, then write  $$\begin{array}{ll}
\Gamma^* =& B^* \cup\bigcup\limits_{\substack{g\in B}}\{ a+g,-a-g\}\cup\bigcup\limits_{\substack{g\in B}}\{ b+g,-b-g\}\\& \cup\bigcup\limits_{\substack{g\in B}}\{ e_1+g,e_2+\phi(g),-e_1-e_2+\varphi(g)\}\\ 
&\cup\bigcup\limits_{\substack{g\in B\\i=1,2}}\left\{c_i+g, d_i+\phi(g),-c_i-d_i+\varphi(g),-c_i-g,-d_i-\phi(g), c_i+d_i-\varphi(g)\right\},\end{array}$$
where the latter $6$-subsets are good.

If now $|B^*|$ is divisible by $3$, then the sequence $(|B^*|/3,0,0)$ is realizable in  $ B^*$ by Theorem~\ref{SK} and $(|B|,0,0)$ is realizable in  $\cup_{g\in B}\{ e_1+g,e_2+\phi(g),-e_1-e_2+\varphi(g)\}$. Since $|B^*|/3+|B|=(|L|-1)/3<s$, we can find the remaining $s- (|L|-1)/3\leq 2|B|$ zero-sum $3$-subsets in 
$$\bigcup\limits_{g\in B}\left\{c_1+g, d_1+\phi(g),-c_1-d_1+\varphi(g),-c_1-g,-d_1-\phi(g), c_1+d_1-\varphi(g)\right\}.$$ Note that we are left now only with zero-sum $2$-subsets, thus we are done.

If $|B^*|$ is not divisible by $3$, then $|B^*|\equiv 7\pmod {12}$ and  the sequence $((|B^*|-4)/3,1,0)$ is realizable in  $ B^*$ by Theorem~\ref{SK} and we proceed as above.
 
For $s\geq |L|/2-1$, let
$$\begin{array}{ll}
\Gamma^* =& \{ a,-a\}\cup\bigcup\limits_{\substack{g\in B^*}}\{ a+g,\phi(g),-a+\varphi(g)\}\cup\bigcup\limits_{\substack{g\in B}}\{ b+g,-b-g\}\\& \cup\bigcup\limits_{\substack{g\in B}}\{ e_1+g,e_2+\phi(g),-e_1-e_2+\varphi(g)\}\\ 
&\cup\bigcup\limits_{\substack{g\in B\\i=1,2}}\left\{c_i+g, d_i+\phi(g),-c_i-d_i+\varphi(g),-c_i-g,-d_i-\phi(g), c_i+d_i-\varphi(g)\right\},\end{array}$$
where the latter $6$-subsets are good.

Since $|B|=|\Gamma|/20=|L|/4$, $(|L|/2-1,0,0)$ is realizable in $\left(\cup_{g\in B^*}\{a+g,\phi(g),-a+\varphi(g)\}\right)$ $\cup\left(\cup_{g\in B}\{ e_1+g,e_2+\phi(g),-e_1-e_2+\varphi(g)\}\right)$. Since $s<|L|$ and $0 \leq s-|L|/2+1$, we can find the remaining $s- |L|/2+1\leq|L|/2=2|B|$ zero-sum $3$-subsets in $$\bigcup\limits_{\substack{g\in B\\i=1,2}}\left\{c_i+g, d_i+\phi(g),-c_i-d_i+\varphi(g),-c_i-g,-d_i-\phi(g), c_i+d_i-\varphi(g)\right\}.$$ Note that we are left now only with zero-sum $2$-subsets, thus we are done.
\end{proof}

Putting the main results together, we obtain the following corollary. Recall that $|I(\Gamma)| = 1$ if and only if the factorization of $\Gamma$ contains exactly one finite cyclic subgroup of even order.

\begin{cor} A finite Abelian group $\Gamma$  has the $4$-Zero-Sum Partition Property if and only if  $|I(\Gamma)|\neq 1$.\label{nowemain}
\end{cor}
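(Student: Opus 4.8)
The plan is to derive Corollary~\ref{nowemain} purely by assembling earlier results; no new combinatorial construction is needed, so I expect no substantial obstacle here --- the real work lies in Theorems~\ref{1mod6}, \ref{3mod6} and~\ref{5mod6}.

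For necessity I would argue by contraposition. If $|I(\Gamma)|=1$, let $\iota$ be the unique involution; by Lemma~\ref{involutions}, $\sum_{g\in\Gamma^*}g=\sum_{g\in\Gamma}g=\iota\neq 0$, whereas any partition of $\Gamma^*$ into zero-sum subsets has total sum $0$. Hence $\Gamma^*$ admits no zero-sum partition at all, in particular none realizing a sequence with all parts at least $4$. Now $|\Gamma|$ is even; either $\Gamma\in\{\zet_2,\zet_4\}$, in which case $|\Gamma|-1\in\{1,3\}$ has no integer partition into parts of size $\geq 4$, so the $4$-ZSPP holds vacuously and these two groups must be excluded from (or verified apart from) the equivalence; or $|\Gamma|\geq 6$, so that $|\Gamma|-1\geq 5$ is a sum of $4$'s, $5$'s, $6$'s and $7$'s, giving an admissible sequence that $\Gamma$ cannot realize, and $\Gamma$ fails the $4$-ZSPP.

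For sufficiency, assume $|I(\Gamma)|\neq 1$. If $|I(\Gamma)|=0$ then $|\Gamma|$ is odd and $\Gamma$ has the $2$-ZSPP by Theorem~\ref{Zeng}, hence \emph{a fortiori} the $4$-ZSPP. If $|I(\Gamma)|>1$, write $\Gamma\cong L\times H$ with $|L|=2^{\eta}$, $\eta\geq 1$, and $|H|$ odd; since $|H|$ is odd, every involution of $\Gamma$ lies in $L\times\{0\}$, so $|I(L)|=|I(\Gamma)|>1$ and the hypotheses of Theorems~\ref{1mod6}--\ref{5mod6} hold. As $|H|$ is odd, $|H|\bmod 6\in\{1,3,5\}$, and these three exhaustive cases are handled respectively by Theorem~\ref{1mod6} (which also absorbs $|H|=1$ via its internal use of Theorem~\ref{SK}) and Theorem~\ref{3mod6}, each giving the $3$-ZSPP, and by Theorem~\ref{5mod6}, giving the $4$-ZSPP. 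Since the $3$-ZSPP implies the $4$-ZSPP, in every case $\Gamma$ has the $4$-ZSPP.

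The only points needing care are bookkeeping: that the residues of $|H|$ modulo $6$ exhaust all odd $|H|$ (including $|H|=1$), that passing from $\Gamma=L\times H$ to $L$ preserves the hypothesis $|I(L)|>1$ required by the cited theorems, and that $3$-ZSPP implies $4$-ZSPP. I do not anticipate a genuine obstacle; the corollary simply packages the three main theorems together with the odd-order case from Theorem~\ref{Zeng}.
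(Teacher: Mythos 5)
Your proposal is correct and follows essentially the same route as the paper: necessity via Lemma~\ref{involutions} (the sum of $\Gamma^*$ equals the unique involution, so no zero-sum partition exists), and sufficiency by writing $\Gamma\cong L\times H$ with $|L|=2^\eta$ and $|H|$ odd, invoking Theorem~\ref{Zeng} when $\eta=0$ and Theorems~\ref{1mod6}, \ref{3mod6} and~\ref{5mod6} otherwise, together with the observation that $3$-ZSPP implies $4$-ZSPP. Your additional remark that $\zet_2$ and $\zet_4$ satisfy the $4$-ZSPP vacuously (since $|\Gamma|-1<4$ admits no partition into parts of size at least $4$), so that the ``only if'' direction read literally requires these two groups to be set aside, is a valid point of care that the paper's necessity argument silently glosses over.
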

\begin{proof} If $|I(\Gamma)|= 1$, then $\Gamma$ does not have $x$-ZSPP for any $x$ because $\sum_{g\in\Gamma}g\neq 0$.

Assume now that $|I(\Gamma)|\neq 1$. By the fundamental theorem of finite Abelian groups, the group $\Gamma \cong L\times H$ such that  $|L|=2^{\eta}$ for some natural number $\eta$, $|I(L)|\neq 1$, and $|H|$ is odd. If $\eta=0$ then we are done by Theorem~\ref{Zeng} since $|I(\Gamma)|=0$. Assume now that $\eta>0$. So $|I(L)|>1$, and we can apply Theorems~\ref{1mod6}, \ref{3mod6} or \ref{5mod6}.
\end{proof}

\begin{cor}\label{cor_m}
Let $m$ be an integer with $m \geq 4$. Then one of the following cases holds:
\begin{enumerate}
\item $m \bmod{4} = 0$ and there exists an Abelian group $\Gamma$ of order $m$ that has $2$-ZSPP.
\item $m \bmod{4} \in \{1,3\}$ and every  Abelian group $\Gamma$ of order $m$ has $2$-ZSPP.
\item $m \bmod{4} = 2$ and every Abelian group $\Gamma$ of order $m$ does not have $x$-ZSPP for any $x$.
\end{enumerate}
\end{cor}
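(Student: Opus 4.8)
The plan is to split on the residue $m\bmod 4$, which takes exactly one of the values $0,1,2,3$, and to dispatch each class using results already in hand.

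For $m\equiv 0\pmod 4$ it is enough to exhibit one witness group, so I would take $\Gamma:=(\zet_2)^2\times H$ for any Abelian group $H$ of order $m/4$ (this makes sense since $4\mid m$; concretely one may use $H=\zet_{m/4}$). As $(\zet_2)^2$ is a subgroup of $\Gamma$ and orders of elements are unchanged on passing to a subgroup, $I\bigl((\zet_2)^2\bigr)\subseteq I(\Gamma)$, so $|I(\Gamma)|\ge 3$ and in particular $|I(\Gamma)|\ne 1$. Corollary~\ref{nowemain} then gives that $\Gamma$ has the $4$-ZSPP, which is all that Case~(1) asks for.

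For $m\equiv 1\pmod 4$ or $m\equiv 3\pmod 4$ the order $m$ is odd, so every Abelian group $\Gamma$ of order $m$ has odd order and hence no element of order $2$; thus $|I(\Gamma)|=0\in\{0,3\}$, and Theorem~\ref{Zeng} immediately yields that every such $\Gamma$ has the $2$-ZSPP. For $m\equiv 2\pmod 4$ the $2$-part of $m$ is exactly $2$, so the Sylow $2$-subgroup of any Abelian group $\Gamma$ of order $m$ is isomorphic to $\zet_2$; writing $\Gamma\cong\zet_2\times H$ with $|H|$ odd, the factor $H$ contributes no involutions, so $|I(\Gamma)|=1$. By Lemma~\ref{involutions}, $\sum_{g\in\Gamma}g=\iota\ne 0$, where $\iota$ is the unique involution. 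Then for any partition $\{S_i\}_{i=1}^t$ of $\Gamma^*$ into zero-sum blocks one would get $0=\sum_{i=1}^t\sum_{s\in S_i}s=\sum_{g\in\Gamma^*}g=\sum_{g\in\Gamma}g=\iota$, a contradiction; hence no zero-sum partition of $\Gamma^*$ exists, so $\Gamma$ has no $x$-ZSPP for any $x$ (already the one-block partition $t=1$, $m_1=m-1$ exhibits the failure whenever the property is not vacuously satisfied).

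The argument is pure bookkeeping, so there is no real obstacle; the one point I would be careful about is matching each residue class with the correct quantifier — an \emph{existence} statement in Case~(1), where universality genuinely fails (e.g.\ $\zet_4\times\zet_{m/4}$ has a single involution when $m/4$ is odd, hence no $x$-ZSPP), against \emph{universal} statements in Cases~(2) and~(3) — and reading the Case~(3) conclusion ``no $x$-ZSPP for any $x$'' in the intended non-vacuous sense.
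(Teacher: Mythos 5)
Your proposal is correct and follows essentially the same route as the paper: the witness $(\zet_2)^2\times\zet_{m/4}$ with Corollary~\ref{nowemain} for $m\equiv 0\pmod 4$, Theorem~\ref{Zeng} for odd $m$, and the unique-involution obstruction via Lemma~\ref{involutions} for $m\equiv 2\pmod 4$. You are in fact slightly more careful than the paper in Case~(2), where the paper only names $\zet_m$ although the claim is about every group of odd order, and your remark about the quantifier asymmetry between Case~(1) and Cases~(2)--(3) is exactly the right point to watch.
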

\begin{proof}
Note that, given a positive integer $m$, if $m \equiv 2 \pmod{4}$, then every group $\Gamma$ of order $m$ has exactly one involution and so does not have $x$-ZSPP for any $x$. But, if $m \geq 4$ and $m \not\equiv 2 \pmod{4}$, then there exists an Abelian group $\Gamma$ of order $m$ that has $2$-ZSPP. Indeed, if $m \bmod{4} \in \{1,3\}$, then $m$ is odd. In this case, $\zet_m$ has no involutions, thus has $2$-ZSPP by Theorem \ref{Zeng}. If $m \bmod{4} = 0$, then $\Gamma = \zet_2 \times \zet_{m/2}$ has exactly $3$ involutions, thus has $2$-ZSPP by Theorem \ref{Zeng}.
\end{proof}

Corollary \ref{nowemain} points in the direction of proving Conjecture \ref{conjecture}. On the other hand, let us show that Conjecture \ref{conjectureT} is false in general, and it is only true in the cases covered by Theorems \ref{Zeng} and \ref{Sylow}.

\begin{thm}\label{Skol}
Let $\Gamma$ be a finite Abelian group with $|I(\Gamma)|>1$. Let $R =\Gamma^* \setminus I(\Gamma)$. For any positive integer $t$ and an integer partition $\{m_i\}_{i=1}^t$ of $|\Gamma^*|$, with {$m_i \geq 2$ for all $i$, $1 \leq i \leq |R|/2$, and {$m_i \geq 3$} for all $i$, $|R|/2+1\leq i\leq t$}, there is a subset partition $\{S_i\}_{i=1}^t$ of $\Gamma^*$ such that $|S_i| = m_i$ and $\sum_{s\in S_i}s = 0$ for all  $1 \leq i \leq t$ if and only if $|I(\Gamma)|\in\{3,|\Gamma^*|\}$.
\end{thm}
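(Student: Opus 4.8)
The plan is to induct on $\eta$. The base cases $\eta\in\{0,1\}$ follow from Theorem~\ref{Zeng}: if $\eta=0$ then $\Gamma=H$ has no involutions and $R=\Gamma^*$, so the statement is exactly $2$-ZSPP; if $\eta=1$ then $|I(\Gamma)|=3$, so $\Gamma$ has $2$-ZSPP, which is stronger than the claim. So assume $\eta\ge 2$ and write $\Gamma=V\times\Gamma'$ with $V=(\zet_2)^2=\{0,v_1,v_2,v_3\}$ (so $v_1+v_2+v_3=0$) and $\Gamma'=(\zet_2)^{2(\eta-1)}\times H$. Put $L'=(\zet_2)^{2(\eta-1)}$; then $|I(\Gamma')|=2^{2(\eta-1)}-1\ge 3$, $R':=(\Gamma')^*\setminus I(\Gamma')=\{(a,h)\in\Gamma':h\neq 0\}$, $|R_\Gamma|=4|R'|$, and $I(\Gamma)=(V\times L')^*$ has $2^{2\eta}-1$ elements.

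First I would fix, using $h\mapsto 2h$ and $h\mapsto -3h$ on $H$ (bijections since $\gcd(6,|H|)=1$) together with a complete mapping of $L'$ (which exists as $2(\eta-1)\ge 2$), maps $\psi,\varphi\in\mathrm{Bij}(\Gamma')$ with $g+\psi(g)+\varphi(g)=0$ for all $g$, $\psi(0)=\varphi(0)=0$, and the extra property $\psi(-g)=-\psi(g)$ (hence also $\varphi(-g)=-\varphi(g)$); such $\psi,\varphi$ map $R'$ onto $R'$ and $I(\Gamma')\cup\{0\}$ onto itself. Splitting $\Gamma^*$ by the $V$-coordinate and writing $\bigcup_{i=1}^{3}\{v_i\}\times\Gamma'=\bigsqcup_{g\in\Gamma'}\{(v_1,g),(v_2,\psi(g)),(v_3,\varphi(g))\}$ (each summand a zero-sum $3$-subset), I obtain a decomposition
$$\Gamma^*=X\sqcup T\sqcup Z,$$
where $X=\{0\}\times(\Gamma')^*\cong(\Gamma')^*$; $T$ is the union of the $|L'|$ three-subsets with $g\in I(\Gamma')\cup\{0\}$, which equals $V^*\times(L'\times\{0\})$ — precisely the involutions of $\Gamma$ with nonzero $V$-part; and $Z$ is the union over the pairs $\{g,-g\}$ with $g\in R'$ of the sets $\{(v_1,g),(v_2,\psi(g)),(v_3,\varphi(g))\}\cup\{(v_1,-g),(v_2,-\psi(g)),(v_3,-\varphi(g))\}$, each of which, by the oddness of $\psi,\varphi$ and $v_1+v_2=v_3$, is a good $6$-subset. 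Thus $Z$ is a union of $|R'|/2$ good $6$-subsets.

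After the standard reduction I may assume all parts of the target sequence lie in $\{2,3,4,5\}$, with the number $d_2$ of $2$'s satisfying $d_2\le|R_\Gamma|/2=2|R'|$ (refining parts $\ge 3$ creates no new $2$'s); let $(\alpha,\beta,\gamma)$ count the $3$'s, $4$'s, $5$'s. Every zero-sum $2$-subset is an inverse pair, hence lies in $X\cup Z$. A union of $k$ good $6$-subsets splits into exactly $j$ zero-sum $2$-subsets together with zero-sum subsets of sizes in $\{3,4\}$ for every $j\in\{0,1,\dots,3k\}\setminus\{3k-1\}$, the value $3k-1$ being genuinely impossible since it would leave two elements. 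I would therefore pick $d_2^Z\in\{0,\dots,3|R'|/2\}\setminus\{3|R'|/2-1\}$ and $d_2^X=d_2-d_2^Z$ with $0\le d_2^X\le|R'|/2=|R_{\Gamma'}|/2$; this is always possible from $d_2\le 2|R'|$ (when $d_2=3|R'|/2-1$ shift one $2$ to $X$; near $d_2=2|R'|$ saturate both sides). Then realize $d_2^X$ twos plus a suitable $\{3,4,5\}$-subsequence inside $X\cong(\Gamma')^*$ by the induction hypothesis for $\Gamma'$, realize $d_2^Z$ twos plus its forced $3$'s and $4$'s inside $Z$, and use $T=V^*\times L'$ to absorb the remaining $3$'s, $4$'s and $5$'s. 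This last step rests on the lemma that for $\eta\ge 2$ the set $V^*\times L'$ realizes every valid $\{3,4,5\}$-triple summing to $3|L'|$: each coset $\{v_i\}\times L'$ is a zero-sum $|L'|$-subset splitting into zero-sum $4$-subsets (all-$4$'s), while $3$'s come from subsets $\{(v_1,a),(v_2,b),(v_3,a+b)\}$ and $5$'s from the $V$-pattern $(v_1,v_1,v_1,v_2,v_3)$, dovetailed across the three cosets using a complete mapping of $L'$.

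Finally, distributing $(\alpha,\beta,\gamma)$: $Z$ contributes $(2n_0,n_1,0)$ (for the $n_0$ good $6$-subsets used as two $3$-subsets and the $n_1$ used as a $2$- plus a $4$-subset, with $n_0,n_1$ tied to $d_2^Z$), $T$ contributes any valid triple summing to $3|L'|$, and $X$ contributes the rest, which is again a valid sequence for $\Gamma'$ (its number of $2$'s is $d_2^X\le|R_{\Gamma'}|/2$ and its other parts are $\ge 3$), so the induction hypothesis closes the argument. The part I expect to be hardest is exactly this bookkeeping together with the flexibility lemma for $V^*\times L'$: one must guarantee, for small $\eta$ (in particular $\eta=2$, where $|L'|=4$ and the only triples summing to $12$ are $(4,0,0)$, $(0,3,0)$, $(1,1,1)$, which are checked by hand) and for all near-extremal $d_2$, that every coordinate of every subsequence sent to $X$, $T$, $Z$ remains non-negative.
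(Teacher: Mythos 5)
Your decomposition $\Gamma^*=X\sqcup T\sqcup Z$ is correct, and the verification that $Z$ is a union of $|R'|/2$ good $6$-subsets (via the odd bijections built from $h\mapsto 2h$, $h\mapsto -3h$ and a complete mapping of $L'$) is sound; but the realization step has a genuine gap that is not merely ``hard bookkeeping'' --- it is impossible as set up. In your scheme the set $Z$, which has $3|R'|=3(|\Gamma'|-|L'|)$ elements and therefore constitutes almost all of $\Gamma^*$ as soon as $|H|>1$, is only ever cut into zero-sum pieces of sizes $2$, $3$ and $4$ (each good $6$-subset yields three $2$'s, or a $2$ and a $4$, or two $3$'s), so every part of size $5$ must be hosted inside $T\cup X$, which has only $3|L'|+|\Gamma'|-1$ elements. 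Concretely, take $\Gamma=(\zet_2)^4\times\zet_{31}$ (so $\eta=2$, $|H|=31\equiv 1\pmod 6$) and the partition of $|\Gamma|-1=495$ into $99$ parts equal to $5$: here $|T|=12$ and $|X|=123$, so your scheme produces at most $\lfloor 135/5\rfloor=27$ zero-sum $5$-subsets instead of $99$. A related rigidity bites even without $5$'s: once $d_2^Z$ is fixed, the numbers of $3$'s and $4$'s forced out of $Z$ are essentially determined ($2c$ threes and $b$ fours with $3a+b=d_2^Z$ and $a+b+c=|R'|/2$), so a sequence consisting of one $3$ and otherwise only $4$'s cannot be matched either. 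To repair this you must allow parts to straddle the building blocks of $Z$ --- a $5$ assembled from a zero-sum $3$-subset of one good $6$-subset and a zero-sum $2$-subset of another, a $4$ from two $2$-subsets --- which reintroduces the parity constraint that the number of $3$-blocks drawn from a union of good $6$-subsets is even, hence exactly the boundary case analysis you hoped to avoid; you would also need to say how a single part is split across the exact-sum boundaries between $X$, $T$ and $Z$, which your plan leaves implicit.

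For comparison, the paper does not induct on $\eta$: it keeps $L=(\zet_2)^{2\eta}$ whole, places all of $I(\Gamma)$ into a single block $M^*\cong L^*$ handled by Theorem~\ref{Sylow} (so $3$'s, $4$'s and $5$'s can all be realized there), and makes $W=R$ one union of good $6$-subsets indexed by a partition of $H^*$, from which $2$'s, $3$'s, $4$'s and $5$'s are all obtained by recombining $2$- and $3$-blocks across different good $6$-subsets; the case analysis then only concerns the one part that straddles the $M^*$/$W$ boundary. Your inductive peeling of a $(\zet_2)^2$ factor is an attractive alternative skeleton, but it needs $Z$ to enjoy the same full flexibility as the paper's $W$ before the induction can close.
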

\begin{proof}
By  Theorems \ref{Zeng} and \ref{Sylow} we can assume that $|I(\Gamma)| = 2^e - 1 < |\Gamma| - 1$ with $e \geq 3$. 
 
 Let first $e$ be even.
 Let $m_1,\dots,m_{|R|/2-1}=2$, $m_{|R|/2},\ldots,m_{|R|/2+|I(\Gamma)|/3-3}=3$, $m_{|R|/2+|I(\Gamma)|/3-2},$ $m_{|R|/2+|I(\Gamma)|/3-1}=4$.
 Note that $\sum_{i=1}^{|R|/2+|I(\Gamma)|/3-1}m_i=|\Gamma^*|$. Suppose that there is a subset partition $\{S_i\}_{i=1}^t$ of $\Gamma^*$ such that $|S_i| = m_i$ and $\sum_{s\in S_i}s = 0$ for all $i$. Then $R\setminus \left(\bigcup_{i=1}^{|R|/2-1}S_i\right)=\{a,-a\}$ for some $a\in R$. Thus $a\in S_j$ for some $j\in\{|R|/2,\ldots,|R|/2+|I(\Gamma)|/3-1\}$ but then $\sum_{s\in S_j}s\neq 0$ since  $I(\Gamma)\cup\{0\}$ is a subgroup of $\Gamma$. Thus for any $\iota_1,\iota_2,\iota_3\in I(\Gamma)$ there is $\iota_1+\iota_2 \in I(\Gamma)$, $\iota_1+\iota_2+\iota_3 \in I(\Gamma)\cup\{0\}$ but $a\notin I(\Gamma)\cup\{0\}$, a contradiction.

For $e$ odd we can show analogously that there is no partition of $\Gamma^*$ for $m_1,\dots,m_{|R|/2-1}=2$, $m_{|R|/2},$ $\ldots,$ $m_{|R|/2+(|I(\Gamma)|+2)/3-1}=3$.\end{proof}

\section{Some Applications}\label{sec:sa}
In this section, we present some applications of Corollary~\ref{nowemain} to magic- and antimagic-type labelings. Generally speaking, such a labeling of a graph $G=(V,E)$ is a mapping from only $V$ or $E$, or their union $V\cup E$, to a set of labels, which most often is a set of integers or elements of a group. Then the weight of a graph element is typically the sum of labels of the adjacent or incident elements of one or both types. When the weight of all elements is required to be equal, then we speak of a magic-type labeling; when the weights should be all different, then we speak of an antimagic-type labeling.

\subsection{\texorpdfstring{$\Gamma$}{Gamma}-irregular labeling for digraphs}\label{subsec:gild}
Let $\overrightarrow{G} = (V,A)$ be a digraph on the set of vertices $V$ with the set of arcs $A$. If $\psi\colon A \rightarrow \Gamma$ is a labeling of arcs of $\overrightarrow{G}$ with elements of $\Gamma$ such that $\varphi_{\psi}\colon V \rightarrow \Gamma$ and the labeling of vertices of $\overrightarrow{G}$ with elements of $\Gamma$ defined by
$$\varphi_{\psi}(x)=\sum_{y\in N^-(x)}\psi((y,x))-\sum_{y\in N^+(x)}\psi((x,y)),$$ is injective, then we say that $\psi$ is a \textit{$\Gamma$-irregular labeling of $\overrightarrow{G}$}.

{Recall that, given a digraph $\overrightarrow{G}=(V,A)$, its \textit{underlying graph} is the graph $G=(V,E)$ in which $\{x,y\} \in E$ if and only if $(x,y) \in A$ or $(y,x)\in A$. Given a digraph $\overrightarrow{G}$ and its underlying graph $G$, the subdigraph $\overrightarrow{C}$ of $\overrightarrow{G}$ induced by the vertex set of a connected component $C$ in $G$ is called a \textit{weakly connected component} of $\overrightarrow{G}$. Thus we have $\overrightarrow{C} = \overrightarrow{G}[V(C)]$.} 

We have the following result:
\begin{lem}[\citep{CicTuz}]\label{zerosum} A digraph $\overrightarrow{G}=(V,A)$ with no isolated
 vertices has a $\Gamma$-irregular labeling if and only if there exists an injection $\varphi$ from $V$ to $\Gamma$ such that\/
 $\sum_{x\in V(\overrightarrow{C})}\varphi(x)=0$ for every  weakly connected component $\overrightarrow{C}$ of\/ $\overrightarrow{G}$.
\end{lem}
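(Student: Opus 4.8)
The plan is to prove the two implications separately. For the forward direction, suppose $\psi$ is a $\Gamma$-irregular labeling of $\overrightarrow{G}$; I would take $\varphi=\varphi_\psi$, which is injective by the very definition of such a labeling, and verify the zero-sum condition by a telescoping argument. Fixing a weakly connected component $\overrightarrow{C}$, note that since $\overrightarrow{C}=\overrightarrow{G}[V(C)]$ is induced on a connected component of the underlying graph, every arc of $\overrightarrow{G}$ incident to a vertex of $V(\overrightarrow{C})$ has both of its endpoints in $V(\overrightarrow{C})$. Summing the defining formula for $\varphi_\psi$ over $x\in V(\overrightarrow{C})$, each such arc $(y,x)$ contributes $+\psi((y,x))$ in the term for $x$ and $-\psi((y,x))$ in the term for $y$, so all contributions cancel and $\sum_{x\in V(\overrightarrow{C})}\varphi(x)=0$.

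For the converse, given an injective $\varphi\colon V\to\Gamma$ with $\sum_{x\in V(\overrightarrow{C})}\varphi(x)=0$ for every weakly connected component $\overrightarrow{C}$, the plan is to construct a labeling $\psi$ component by component so that $\varphi_\psi=\varphi$; taking the union of these partial labelings then gives a $\Gamma$-irregular labeling, since $\varphi_\psi=\varphi$ is injective. Fix $\overrightarrow{C}$; because $\overrightarrow{G}$ has no isolated vertices (and, as usual, no loops), the underlying graph of $\overrightarrow{C}$ is connected on at least two vertices, so I can pick a spanning tree $T$ of it. I would set $\psi\equiv 0$ on every arc whose underlying edge is not in $T$, root $T$ at some vertex $r$, and then assign values to the tree arcs by processing the vertices from the leaves upward, so that every vertex is treated before its parent.

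When a non-root vertex $v$ is reached in this order, every arc incident to $v$ other than the one whose underlying edge joins $v$ to its parent has already received a value of $\psi$ (the tree arcs to its children, handled earlier, and the non-tree arcs, set to $0$); hence $\varphi_\psi(v)$ is an affine function of the single remaining value, with coefficient $+1$ or $-1$ according to the orientation of the parent edge, and there is a unique choice of that value making $\varphi_\psi(v)=\varphi(v)$. After all non-root vertices are handled, $\varphi_\psi$ agrees with $\varphi$ everywhere on $V(\overrightarrow{C})$ except possibly at $r$. To close the argument, I would invoke the telescoping identity from the forward direction, which forces $\sum_{x\in V(\overrightarrow{C})}\varphi_\psi(x)=0$, together with the hypothesis $\sum_{x\in V(\overrightarrow{C})}\varphi(x)=0$; subtracting yields $\varphi_\psi(r)=\varphi(r)$ as well.

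I expect the only real care to be organizational, in the converse direction: arranging the leaf-to-root processing order together with the convention that $\psi$ vanishes off $T$ so that exactly one value of $\psi$ is undetermined at each step, and then using the component zero-sum hypothesis precisely once to settle the root. No group structure beyond cancellation of inverses in $\Gamma$ is used, and the no-isolated-vertices assumption is exactly what guarantees that the spanning tree carries an edge to work with.
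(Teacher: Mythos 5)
Your argument is correct and is essentially the standard proof of this result, which the paper itself does not reprove but imports from the cited source: the forward direction by telescoping the arc contributions within each weakly connected component, and the converse by fixing a spanning tree of each component, zeroing $\psi$ off the tree, and propagating values from the leaves to the root, with the component zero-sum hypothesis settling the root. The only detail worth making explicit is that a tree edge may carry two oppositely oriented arcs, in which case you set one of them to $0$ and solve for the other; this falls under the organizational care you already flagged.
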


Cichacz and Tuza also showed the following:
\begin{thm}[\citep{CicTuz}]\label{glowne}Any digraph\/ $\overrightarrow{G}$ of order\/ $n$ with no weakly connected components of order less than\/ $3$ has a $\Gamma$-irregular labeling for every\/ $\Gamma$ such that\/
  $|\Gamma|\geq 2n+2\sqrt{n-1/2}-1$.
\end{thm}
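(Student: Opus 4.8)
The plan is to translate the statement into a purely group-theoretic one via Lemma~\ref{zerosum}. Since $\overrightarrow{G}$ has no weakly connected component of order less than $3$, in particular it has no isolated vertices, so by Lemma~\ref{zerosum} it suffices to exhibit an injective map $\varphi\colon V\to\Gamma$ (which exists set-theoretically because $|\Gamma|\ge n$) whose restriction to each weakly connected component sums to $0$. If $n_1,\dots,n_t$ denote the orders of the weakly connected components of $\overrightarrow{G}$ — so $n_i\ge 3$ for every $i$ and $\sum_{i=1}^t n_i=n$ — this is precisely the assertion that $\Gamma$ contains pairwise disjoint subsets $S_1,\dots,S_t$ with $|S_i|=n_i$ and $\sum_{s\in S_i}s=0$. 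Observe that there is no constraint linking the $S_i$ to one another beyond disjointness: the leftover set $\Gamma\setminus\bigcup_i S_i$ automatically has sum $\sum_{g\in\Gamma}g$, with no condition imposed on it, so groups with a single involution are not a problem here. Also, $\varphi$ is allowed to take the value $0$, so the $S_i$ need not avoid $0$.

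First I would normalise the component orders: since a disjoint union of zero-sum sets is again zero-sum, any $n_i\ge 6$ may be replaced by several sizes in $\{3,4,5\}$ summing to $n_i$, so I may assume $n_i\in\{3,4,5\}$ for every $i$ (and hence $t\le n/3$). Next I would build $S_1,\dots,S_t$ one at a time in an arbitrary order, keeping track of the set $U\subseteq\Gamma$ of elements not yet used. At the moment $S_i$ is chosen, the number of already-used elements is at most $n-n_i$, so $|U|\ge|\Gamma|-n+n_i$; combining with $|\Gamma|\ge 2n+2\sqrt{n-1/2}-1$ and $n\ge n_i$ gives $|U|\ge 2n_i+2\sqrt{n_i-1/2}-1$, and in fact $|U|>\tfrac12|\Gamma|+2$ (since $2n+2\sqrt{n-1/2}-1>2n-1$ forces $n<\tfrac12(|\Gamma|+1)$, with room to spare). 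So the theorem reduces to the following sublemma, which is the technical heart: \emph{if $k\in\{3,4,5\}$ and $U\subseteq\Gamma$ satisfies $|U|\ge 2k+2\sqrt{k-1/2}-1$ (equivalently, on the relevant trajectory, $|U|>\tfrac12|\Gamma|+2$), then $U$ contains a zero-sum $k$-subset.}

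To prove the sublemma I would construct the $k$-subset as a short gadget completed by an inverse pair: choose $k-2$ distinct elements $a_1,\dots,a_{k-2}\in U$, set $s=a_1+\dots+a_{k-2}$, and look for $\{x,y\}\subseteq U$ with $x\ne y$, $x+y=-s$ and $x,y\notin\{a_1,\dots,a_{k-2}\}$. The number of unordered pairs of $\Gamma$ with a prescribed sum is $\tfrac12|\Gamma|$ when that sum is not a double, and at least $\tfrac12(|\Gamma|-|I(\Gamma)|-1)$ in general; each element outside $U$, and each $a_j$, destroys at most one such pair, so a pigeonhole count produces the required pair once $|U|$ exceeds $\tfrac12|\Gamma|$ by the stated amount — after first choosing the $a_j$'s, if needed, so that $-s$ avoids the few ``bad'' sums realised by too few pairs. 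Making this uniform over the group structure is the delicate point: I would single out elementary abelian $2$-groups (more generally, groups with many involutions, where inverse pairs are scarce), using that two distinct pairs with the same sum are automatically disjoint, and that two zero-sum $3$-subsets $\{a,b,-a-b\}$, $\{-a,-b,a+b\}$ merge into a ``good'' $6$-subset which is a union of three inverse pairs. Cyclic-heavy groups are handled instead by Kneser/Cauchy--Davenport-type sumset estimates. The finitely many small groups not covered by the asymptotic count are settled by inspection.

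The main obstacle is precisely this sublemma in the presence of structured extremal configurations. For instance, in $(\zet_2)^d$ a set of the form $\{0\}\cup(v+W)$ with $W$ a hyperplane and $v\notin W$ has exactly $\tfrac12|\Gamma|+1$ elements yet contains no zero-sum $3$-subset, so a ``$+2$''-type slack over $\tfrac12|\Gamma|$ is genuinely needed, and one must check that $|\Gamma|\ge 2n+2\sqrt{n-1/2}-1$ always provides it. Pushing the count through uniformly for all abelian $\Gamma$ — $p$-group-heavy cases via the inverse-pair/good-$6$-subset device, cyclic-heavy cases via sumset bounds — so that a single clean threshold emerges is where the real effort lies; the appearance of the $2\sqrt{n-1/2}$ term is an artifact of carrying out that uniformisation with a simple counting argument rather than a sign of any genuine quadratic phenomenon, and I would not expect it to be tight.
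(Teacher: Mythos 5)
Note first that this theorem is not proved in the present paper at all: it is imported verbatim from \cite{CicTuz}, so there is no in-paper argument to compare yours against. Judged on its own terms, your reduction is correct. Lemma~\ref{zerosum} does turn the statement into finding pairwise disjoint zero-sum subsets of the prescribed sizes; the leftover set indeed carries no constraint (so $|I(\Gamma)|=1$ causes no obstruction), the value $0$ may be used, the normalization of component orders to $\{3,4,5\}$ is legitimate, and the greedy bookkeeping giving $|U|\geq|\Gamma|-n+n_i>\tfrac12|\Gamma|+4$ at every step is arithmetically sound. It is worth noting that your route, if completed, proves a \emph{stronger} bound, $|\Gamma|\geq 2n+O(1)$, rather than $2n+2\sqrt{n-1/2}-1$ (consistent with Corollary~\ref{liniowe} and the concluding conjecture of the paper); so you are not reconstructing the argument of \cite{CicTuz} but proposing a genuinely different, purely counting-based one.

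The gap is that the sublemma --- every $U\subseteq\Gamma$ with $|U|>\tfrac12|\Gamma|+4$ contains a zero-sum $k$-subset for each $k\in\{3,4,5\}$ --- which under your reduction \emph{is} the theorem, is only sketched, and you say yourself that this is ``where the real effort lies.'' As stated it is also partly wrong: the absolute form ``$|U|\geq 2k+2\sqrt{k-1/2}-1$'' is false (nine powers of $2$ in $\zet_N$ for large $N$ contain no zero-sum $3$-subset); only the density form is usable. That density form is true and your pair-counting device does close, but the steps you defer are exactly the ones that must be written. For $k=3$ a direct count of ordered solutions of $a+b+c=0$ in $U^3$ (at least $(2|U|-|\Gamma|)|U|$ of them, minus at most $3|U|$ degenerate ones) finishes at $|U|\geq\tfrac12|\Gamma|+2$ with no case analysis, matching your extremal hyperplane example. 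For $k=4,5$ one must control the up to $|I(\Gamma)|+1$ degenerate solutions of $2x=-s$; this is done by choosing the prefix $a_1,\dots,a_{k-2}$ so that $s\notin 2\Gamma$, which is possible because $|2\Gamma|\leq\tfrac12|\Gamma|<|U|$ whenever $\Gamma$ has even order (and costs only one pair when $|\Gamma|$ is odd). No Kneser or Cauchy--Davenport input is needed, and no finite list of small groups needs separate inspection; invoking these is a sign the count was never actually closed. Until the sublemma is proved in full, the proposal is an outline rather than a proof.
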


They also showed the following:
\begin{lem}[\citep{CicTuz}]\label{lemat} Let $\overrightarrow{G}$ be a digraph of order $n$ with no weakly connected component of the order less than 3,
and let $\Gamma$ be a finite Abelian group such that $|\Gamma|-|I(\Gamma)|\geq2n.$
Then  $\overrightarrow{G}$ has a $\Gamma$-irregular labeling.
\end{lem}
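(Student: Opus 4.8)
The plan is to pass through Lemma~\ref{zerosum} and then build the required element sets out of inverse pairs. Since every weakly connected component of $\overrightarrow{G}$ has order at least $3$, the digraph has no isolated vertices, so by Lemma~\ref{zerosum} it suffices to exhibit an injective map $\varphi\colon V\to\Gamma$ whose sum over the vertex set of each weakly connected component vanishes. Writing $n_1,\dots,n_t$ for the orders of the weakly connected components, so $n_i\ge 3$ and $\sum_{i=1}^t n_i=n$, this is the same as producing pairwise disjoint sets $S_1,\dots,S_t\subseteq\Gamma$ with $|S_i|=n_i$ and $\sum_{s\in S_i}s=0$ for all $i$; one then sends the vertices of the $i$-th component bijectively onto $S_i$.

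First I would count the available inverse pairs. Put $R=\Gamma\setminus(\{0\}\cup I(\Gamma))$, the set of elements that are neither $0$ nor involutions. Then $R=-R$ and $g\neq -g$ for $g\in R$, so $R$ splits into $|R|/2$ inverse pairs $\{g,-g\}$. Since $|I(\Gamma)|$ equals $0$ when $|\Gamma|$ is odd and $2^{e}-1$ with $e\ge 1$ when $|\Gamma|$ is even, $|\Gamma|$ and $|I(\Gamma)|$ always have opposite parity; hence $|\Gamma|-|I(\Gamma)|$ is odd, and the hypothesis $|\Gamma|-|I(\Gamma)|\ge 2n$ upgrades to $|\Gamma|-|I(\Gamma)|\ge 2n+1$. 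Therefore $|R|=|\Gamma|-|I(\Gamma)|-1\ge 2n$, i.e. at least $n$ inverse pairs are available, while we need only $n$ elements in total.

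Next I would assemble the sets. For a component of even order $n_i$, take $S_i$ to be a union of $n_i/2$ as-yet-unused inverse pairs; its elements sum to $0$. For a component of odd order $n_i\ge 3$, take $S_i$ to be the union of one zero-sum $3$-subset and $(n_i-3)/2$ unused inverse pairs. The zero-sum $3$-subsets are obtained from pairs of unused inverse pairs: given two of them, $\{a,-a\}$ and $\{b,-b\}$, the triple $\{a,b,-a-b\}$ sums to $0$ and, after possibly replacing $a$ by $-a$ and/or $b$ by $-b$ (which only has to defeat the equalities $2a=-b$ and $2b=-a$), consists of three distinct elements; its mirror $\{-a,-b,a+b\}$ is then a second zero-sum $3$-subset, so each such pair of inverse pairs can in fact be chosen so that $\{a,b,-a-b,-a,-b,a+b\}$ is a good $6$-subset of $R$, furnishing two disjoint zero-sum triples. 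Letting $s$ be the number of odd components, $\lceil s/2\rceil$ well-chosen good $6$-subsets supply the $s$ triples we need, and a direct count using $\sum_i n_i=n$ and $3s\le n$ shows the total number of inverse pairs consumed stays well below $n$, so the process never stalls.

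The main obstacle is the bookkeeping in this last step: one must verify that at every stage an unused inverse pair with the desired property still exists. The only things that can go wrong are the finitely many coincidences $2a=-b$, $2b=-a$, $a+b\in I(\Gamma)$, together with the event that $-a-b$ — which is automatically nonzero — has already been used; each of these excludes only a bounded number of candidate pairs, so the slack $|R|\ge 2n$ guarantees a valid choice remains. If one prefers to avoid this case analysis altogether, it is harmless to treat the tight extremal situation in which every component has order exactly $3$ separately, where one simply fixes $\lceil t/2\rceil$ pairwise disjoint good $6$-subsets of $R$ at the outset and distributes their halves among the components.
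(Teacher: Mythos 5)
First, note that the paper does not prove this lemma at all: it is quoted verbatim from \cite{CicTuz}, so there is no in-paper proof to compare against, and your proposal must stand on its own. Your reduction via Lemma~\ref{zerosum} to finding pairwise disjoint zero-sum subsets of sizes $n_1,\dots,n_t$, your parity observation that $|\Gamma|-|I(\Gamma)|$ is always odd (hence $|R|=|\Gamma|-|I(\Gamma)|-1\ge 2n$, giving at least $n$ inverse pairs in $R=\Gamma\setminus(\{0\}\cup I(\Gamma))$), and the treatment of even-order components by whole inverse pairs are all correct.

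The gap is exactly in the step you yourself flag as ``bookkeeping'': the existence of $\lceil s/2\rceil$ pairwise disjoint good $6$-subsets inside $R$. Your claim that each coincidence ``excludes only a bounded number of candidate pairs'' is false: for a fixed $a$, the condition $a+b\in I(\Gamma)$ excludes exactly $|I(\Gamma)|$ values of $b$, the condition $2b=-a$ excludes $|I(\Gamma)|+1$ values whenever it has any solution, and the condition that $-a-b$ be unused excludes up to $|U|$ values, where $U$ is the set of elements already consumed. Neither $|I(\Gamma)|$ nor $|U|$ is bounded; indeed $|I(\Gamma)|$ can be of the same order as $|R|$ itself (e.g.\ $\Gamma\cong(\zet_2)^k\times\zet_3$ has $|I(\Gamma)|=2^k-1$ while $|R|=2^{k+1}$). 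Quantitatively, when you extract the last good $6$-subset you may already have $|U|$ close to $n$, so only about $|R|-|U|\le n$ candidates for $b$ remain, while the forbidden set for $a+b$ has size up to about $|U|+2|I(\Gamma)|$, which can exceed the number of candidates; the greedy choice therefore does not follow from $|R|\ge 2n$. In the example above, if every component has order $3$ one needs on the order of $2^{k}/6$ pairwise disjoint good $6$-subsets, which amounts to packing disjoint triples $\{u,v,u+v\}$ into $(\zet_2)^k\setminus\{0\}$ --- this is true, but by a partial-spread/Steiner-type argument, not by exclusion counting. So the conclusion of that step is plausible (for odd-order groups it is essentially what Theorem~\ref{Tannenbaum1} supplies), but it is precisely the nontrivial core of the lemma, and your write-up does not establish it.
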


By Corollary~\ref{nowemain}, we obtain the following:
\begin{cor} Any digraph\/ $\overrightarrow{G}$ of order\/ $n$ with no weakly connected components
 of order less than\/ $4$ has a $\Gamma$-irregular labeling for every\/ $\Gamma$ such that\/
  $|I(\Gamma)|\neq 1$ and $|\Gamma|\geq n+5$.\label{liniowe}\end{cor}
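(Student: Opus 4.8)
The plan is to reduce the statement to Corollary~\ref{nowemain} by way of Lemma~\ref{zerosum}. First I would observe that, since every weakly connected component of $\overrightarrow{G}$ has at least $4\geq 2$ vertices, $\overrightarrow{G}$ has no isolated vertices, so Lemma~\ref{zerosum} applies and it suffices to construct an injective map $\varphi\colon V\to\Gamma$ whose restriction to the vertex set of each weakly connected component sums to $0$. Let $n_1,\dots,n_t$ denote the orders of the weakly connected components $\overrightarrow{C}_1,\dots,\overrightarrow{C}_t$ of $\overrightarrow{G}$, so $n_i\geq 4$ for every $i$ and $\sum_{i=1}^t n_i=n$ (if $n=0$ there is nothing to prove, and otherwise $n\geq 4$).

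The key point is that the hypothesis $|\Gamma|\geq n+5$ leaves room for one extra ``dummy'' block. Writing $m=|\Gamma|$, I would set $n_{t+1}=m-1-n$; then $n_{t+1}\geq 4$, so $n_1,\dots,n_t,n_{t+1}$ is an integer partition of $m-1$ all of whose parts are at least $4$. Since $|I(\Gamma)|\neq 1$, Corollary~\ref{nowemain} guarantees that $\Gamma$ has the $4$-Zero-Sum Partition Property, and therefore there is a subset partition $\{S_i\}_{i=1}^{t+1}$ of $\Gamma^*$ with $|S_i|=n_i$ and $\sum_{s\in S_i}s=0$ for every $i$, $1\leq i\leq t+1$.

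Finally I would define $\varphi$ by choosing, for each $i$ with $1\leq i\leq t$, an arbitrary bijection from $V(\overrightarrow{C}_i)$ onto $S_i$, simply discarding the block $S_{t+1}$. Because $S_1,\dots,S_t$ are pairwise disjoint subsets of $\Gamma$, the resulting map $\varphi\colon V\to\Gamma$ is injective, and for every weakly connected component we have $\sum_{x\in V(\overrightarrow{C}_i)}\varphi(x)=\sum_{s\in S_i}s=0$. Lemma~\ref{zerosum} then produces the desired $\Gamma$-irregular labeling of $\overrightarrow{G}$.

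I do not expect a genuine obstacle here: the argument is a short bookkeeping reduction, and the only thing that really needs checking is that the bound $|\Gamma|\geq n+5$ is precisely what forces the dummy block $n_{t+1}=m-1-n$ to have size at least $4$, so that the padded sequence is admissible for $4$-ZSPP. I would also note in passing that this is exactly where the bound is used: if $m-1-n\in\{1,2,3\}$ the padding trick breaks down, and one would need a finer argument (splitting the dummy block across several components, or invoking a stronger zero-sum property) to push the constant below roughly $n+5$.
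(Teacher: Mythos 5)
Your proposal is correct and follows essentially the same route as the paper: pad the component orders with a dummy block of size $|\Gamma^*|-n\geq 4$, apply the $4$-ZSPP from Corollary~\ref{nowemain}, and transfer the zero-sum partition to a labeling via Lemma~\ref{zerosum}. Your remark on where the bound $|\Gamma|\geq n+5$ is used matches the paper's (terser) justification exactly.
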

\begin{proof}
Let $\{\overrightarrow{C}_i\}_{i=1}^t$ be the weakly connected components of $\overrightarrow{G}$. By Lemma~\ref{zerosum}, there exists a $\Gamma$-irregular labeling of a digraph $\overrightarrow{G}$ with weakly connected components $\{\overrightarrow{C}_i\}_{i=1}^t$ if and only if there exist in $\Gamma$ pairwise disjoint subsets $\{S_i\}_{i=1}^t$ such that $|S_i|=|V(\overrightarrow{C}_i)|$ and $\sum_{s\in S_i}s=0$ for every $1 \leq $i$ \leq t$. Let $m_i=|V(\overrightarrow{C}_i)|$ for every $1 \leq i \leq t$ and let $m_{t+1}=|\Gamma^*|-n$. Since $m_{t+1}\geq 4$, using the sequence $\{m_i\}_{i=1}^{t+1}$, by Theorem~\ref{Tannenbaum1} and Corollary~\ref{nowemain}, we get the result.
\end{proof}

If we consider all Abelian groups, then by Corollary \ref{liniowe} and Lemma~\ref{lemat} one can easily see the following:

\begin{cor}Any digraph\/ $\overrightarrow{G}$ of order\/ $n$ with no weakly connected components
 of order less than\/ $4$ has a $\Gamma$-irregular labeling for every\/ $\Gamma$ such that $|\Gamma|\geq 2n+1$.
\end{cor}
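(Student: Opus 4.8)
The plan is to split into two cases according to whether $\Gamma$ has exactly one involution, and in each case invoke one of the two results already available: Corollary~\ref{liniowe} for the case $|I(\Gamma)|\neq 1$, and Lemma~\ref{lemat} for the case $|I(\Gamma)|= 1$. First I would observe that we may assume $\overrightarrow{G}$ has at least one vertex (otherwise the statement is vacuous), and since it has no isolated vertices and no weakly connected component of order less than $4$, every weakly connected component has order at least $4$; in particular $n\geq 4$. This is the only genuinely numerical input needed, because it gives $2n+1\geq n+5$.

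Next, in the case $|I(\Gamma)|\neq 1$, I would note that $|\Gamma|\geq 2n+1\geq n+5$ by the inequality $n\geq 4$ just established, so the hypotheses of Corollary~\ref{liniowe} are met and $\overrightarrow{G}$ has a $\Gamma$-irregular labeling. In the case $|I(\Gamma)|=1$, I would compute $|\Gamma|-|I(\Gamma)|=|\Gamma|-1\geq 2n$, so the hypotheses of Lemma~\ref{lemat} are met (the condition ``no weakly connected component of order less than $3$'' being a weakening of our hypothesis), and again $\overrightarrow{G}$ has a $\Gamma$-irregular labeling. Combining the two cases completes the proof.

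There is no real obstacle here: the statement is a packaging of Corollary~\ref{liniowe} (which rests on the main result, Corollary~\ref{nowemain}) together with the earlier Lemma~\ref{lemat}, and the only thing to verify is the threshold arithmetic, which reduces to $n\geq 4$. The mild subtlety worth spelling out is exactly why $n\geq 4$: it is forced by the combination of ``no isolated vertices'' (implicit in the existence of a $\Gamma$-irregular labeling, cf.\ Lemma~\ref{zerosum}) and ``no weakly connected component of order less than $4$''. I would state this explicitly so that the bound $|\Gamma|\geq 2n+1$ visibly dominates both $n+5$ and $2n+|I(\Gamma)|$ in the respective cases.
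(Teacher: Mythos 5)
Your proof is correct and is exactly the argument the paper intends: the paper derives this corollary by combining Corollary~\ref{liniowe} (for $|I(\Gamma)|\neq 1$) with Lemma~\ref{lemat} (for $|I(\Gamma)|=1$), and the only arithmetic to check is that $n\geq 4$ gives $2n+1\geq n+5$, which you verify. Your explicit handling of the degenerate case and of why $n\geq 4$ is a reasonable addition to what the paper leaves as "one can easily see."
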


We think that the results from Corollary~\ref{liniowe} can be achieved for all Abelian groups and digraphs with no weakly connected components
 of order less than\/ $3$, therefore we finish this subsection with the following conjecture.

\begin{conj}There exists a constant $K$ such that any digraph\/ $\overrightarrow{G}$ of order\/ $n$ with no weakly connected components
 of order less than\/ $3$ has a $\Gamma$-irregular labeling for every\/ $\Gamma$ such that $|\Gamma|\geq n+K$.
\end{conj}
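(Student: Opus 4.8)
The plan is to translate the statement, via Lemma~\ref{zerosum}, into a packing question for zero-sum subsets, and then to feed it into the machinery of Section~\ref{sec:mr}, exploiting that a constant number of group elements may stay unused.

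First, the reduction. If $\overrightarrow{G}$ has weakly connected components of orders $m_1,\dots,m_t\ge 3$, so $n=\sum_i m_i$, then by Lemma~\ref{zerosum} it suffices to produce pairwise disjoint $S_1,\dots,S_t\subseteq\Gamma$ with $|S_i|=m_i$ and $\sum_{s\in S_i}s=0$. As in the paragraph preceding Theorem~\ref{1mod6}, each $m_i$ may be split into parts from $\{3,4,5\}$ (a component is satisfied once its label set is a disjoint union of zero-sum pieces), so we may assume all targets lie in $\{3,4,5\}$ and write the multiset compactly as $(a,b,c)$ with $3a+4b+5c=n$. Since $|\Gamma|\ge n+K$, we add a \emph{dummy} target of size $d:=|\Gamma|-1-n\ge K-1$, so that the whole multiset sums to $|\Gamma|-1$; if $K\ge 4$ then $d\ge 3$.

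Write $\Gamma\cong L\times H$ with $|L|=2^{\eta}$ and $|H|$ odd, so $|I(\Gamma)|=|I(L)|$. If $\eta=0$ or $|I(\Gamma)|=3$, then $\Gamma$ has $2$-ZSPP (Theorem~\ref{Zeng}); if $|I(\Gamma)|\ge 7$ and $|H|\bmod 6\in\{1,3\}$ (in particular if $|H|=1$), then $\Gamma$ has $3$-ZSPP (Theorems~\ref{SK}, \ref{1mod6}, \ref{3mod6}). In all these cases, realizing $(a,b,c)$ together with the dummy part in $\Gamma^*$, recombining the pieces of each split component into a single zero-sum set, and discarding the dummy's set, yields the required $S_i\subseteq\Gamma^*\subseteq\Gamma$; so they are settled with $K=4$. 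When $|I(\Gamma)|\ne 1$, the only case not covered is $|I(\Gamma)|\ge 7$ with $|H|\equiv 5\pmod 6$, where only $4$-ZSPP is known (Theorem~\ref{5mod6}), and this is the crux. The slack is of no help here: a disjoint union of $\approx n/3$ directed triangles forces $a\approx|\Gamma|/3$ targets of size $3$, far more than the $s<|\Gamma|/5$ produced by the refinement in the proof of Theorem~\ref{5mod6}. So finishing this case amounts to proving $3$-ZSPP for all $\Gamma$ with $|H|\equiv 5\pmod 6$ — the still-open instance of Conjecture~\ref{conjecture}. I would attack it by extending the case analyses of Theorems~\ref{1mod6} and~\ref{3mod6}: form $M^*\cong L^*$ and the reservoir $W$ from the Skolem partition of $H^*$ (now two zero-sum $2$-subsets together with good $6$-subsets, each of which splits into two zero-sum $3$-subsets or three zero-sum $2$-subsets), and for an arbitrary triple $(a,b,c)$ follow the index $l$ at which the realization passes from $M^*$ into $W$, settling the residual configurations $r_l''=1$ and $r_l'\in\{1,2\}$. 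The delicate point — and the reason this is still open — is that $M^*$ has only $3$-ZSPP (not $2$-ZSPP), that $|L^*|\equiv 3\pmod 4$, and that the $H$-coordinates of the block of $\Gamma^*$ bordering $M^*$ are confined to four values, so the boundary corrections must be paid with good $6$-subsets rather than with loose zero-sum $2$-subsets.

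Finally, the case $|I(\Gamma)|=1$, i.e., $\Gamma\cong\zet_{2^{\eta}}\times H$ with $H$ of odd order and $\eta\ge 1$. Now $\sum_{g\in\Gamma}g=\iota\ne 0$, so no version of ZSPP holds; but the slack is spent differently, by declaring $\iota$ (and, if convenient, $0$) unused. I would induct on $\eta$. For $\eta=1$, $\Gamma\cong\zet_2\times H$: the set $\{0\}\times H^*$ carries a Skolem partition (Theorem~\ref{Tannenbaum1}) whose good $6$-subsets split into zero-sum $3$-subsets, while $\{1\}\times H^*$ is the union of the zero-sum $2$-subsets $\{(1,h),(1,-h)\}$; building a $4$-target as $2+2$, a $5$-target as $3+2$, and a $3$-target either directly or as $\{(0,0),(1,h),(1,-h)\}$ realizes an arbitrary $(a,b,c)$ with $3a+4b+5c\le|\Gamma|-2$, the bounded discrepancies (the residue $|H|\bmod 6$, a possibly unpaired half of a good $6$-subset) being absorbed by the slack. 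For $\eta\ge 2$ one passes to the index-$2$ subgroup $\Gamma'\cong\zet_{2^{\eta-1}}\times H$, which still has a single involution: its coset $(1,0)+\Gamma'$ consists of the elements with odd first coordinate, hence is a union of inverse pairs and can host only zero-sum subsets of even size, so odd-size targets go inside $\Gamma'\setminus\{0\}$ (by induction) or into small bridging sets such as $\{(2^{\eta}-2,h),(1,h'),(1,h'')\}$ with $h+h'+h''=0$. Put together, these steps would give the conjecture with a small absolute constant $K$; the whole weight rests on the third paragraph, the $|H|\equiv 5\pmod 6$ case, which boils down to a currently open instance of $3$-ZSPP.
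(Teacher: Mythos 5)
There is a genuine gap: what you have written is not a proof, and you say so yourself. The statement is posed in the paper as an open conjecture, and your proposal correctly identifies why it is open but does not close it. Your reduction via Lemma~\ref{zerosum} and the dummy part of size $|\Gamma|-1-n$ is sound, and it does settle the cases where $\Gamma$ has $2$- or $3$-ZSPP (Theorems~\ref{Zeng}, \ref{SK}, \ref{1mod6}, \ref{3mod6}); but for $|I(\Gamma)|\geq 7$ with $|H|\equiv 5\pmod 6$ you correctly observe that a constant slack cannot compensate for an unbounded number of size-$3$ targets (the all-triangles digraph forces $\approx|\Gamma|/3$ odd parts, while Theorem~\ref{5mod6} only handles $s<|\Gamma|/5$ of them), and your ``attack plan'' for this case is a restatement of the difficulty rather than an argument: no mechanism is given for paying the boundary corrections between $M^*$ and $W$ when the Skolem partition of $H^*$ contributes two loose $2$-subsets, which is exactly the unresolved instance of Conjecture~\ref{conjecture}.

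The $|I(\Gamma)|=1$ branch is also not established. For $\eta=1$ the construction over $\zet_2\times H$ is plausible, but for $\eta\geq 2$ the plan ``odd-size targets go inside $\Gamma'\setminus\{0\}$ by induction'' fails on the same extremal example: a disjoint union of triangles needs about $|\Gamma|/3$ pairwise disjoint zero-sum $3$-subsets, whereas the index-$2$ subgroup $\Gamma'$ can host at most $|\Gamma|/6$ of them, so most triangles must be realized by bridging sets meeting the nontrivial coset in exactly two elements, and you give no accounting that this can be done while keeping the leftover bounded by a constant independent of $\eta$. In short, the proposal is an honest and partially correct reduction whose two essential cases ($|H|\equiv 5\pmod 6$ with many involutions, and $|I(\Gamma)|=1$) remain unproved; this is consistent with the statement being a conjecture rather than a theorem of the paper.
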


\subsection{\texorpdfstring{$\Gamma$}{Gamma}-antimagic labeling}\label{subsec:any}

Let $\Gamma$ be an Abelian group. A $\Gamma$-antimagic labeling of a graph $G$ is defined as a bijection 
$f\colon E(G)\to \Gamma$, where the weight of each vertex, which is the sum of the labels on its incident edges, is distinct for all vertices. \citet{KLR} demonstrated that if $\Gamma$ has a unique involution, then no tree of order $|\Gamma|$ has a $\Gamma$-antimagic labeling. They conjectured that a tree of order $|\Gamma|$ has a $\Gamma$-antimagic labeling if and only if $\Gamma$ does not have a unique involution. By applying the same approach used by \citet{KLR}  for 2-trees, and using Corollary~\ref{nowemain}, we derive the following result:

\begin{cor}Every $4$-tree\footnote{A $4$-tree $T$ is a rooted tree, where every vertex that is not a leaf has at least four children.} $T$ of order $|\Gamma|$ admits a $\Gamma$-antimagic labeling if and only if $|I(\Gamma)|\neq 1$.\end{cor}
\begin{proof} 
The necessity of the condition follows directly from the work of~\citet[Theorem E]{KLR}. Thus, let us assume that $|I(\Gamma)|\neq 1$. Let $\{v_i\}_{i=1}^{t}$ be the vertices of $T$ which are not leaves. Let us denote their corresponding numbers of children by $\{m_i\}_{i=1}^{t}$. Thus $\sum_{i=1}^{t}m_i=|\Gamma|-1$. Since $T$ is a
$4$-tree, we have that $m_i \geq 4$ for every  $1 \leq i \leq t$. The group $\Gamma$ has $4$-ZSPP by Corollary~\ref{nowemain}, and there exists a zero-sum partition $\{S_i\}_{i=1}^{t}$ of $\Gamma^*$ such that $|S_i|=m_i$ for $1 \leq i \leq t$. For every $1 \leq i \leq t$, we label the edges of the set $E_i=\{v_iw\colon w$ is a child of $v_i\}$ by the elements of $S_i$. The edges of $T$ are labeled bijectively with the non-zero elements of $\Gamma$, and the sum of
the labels in every $E_i$ is $0$. Since every vertex of $T$, except the root (whose weight is $0$), has a unique parent, it easily follows that the weights are pairwise distinct.
\end{proof}

\subsection{\texorpdfstring{$\Gamma$}{Gamma}-distance-magic and -antimagic labeling in graphs with twins}\label{ssec:twins}

\citet{Fro1} defined the notion of a \textit{group-distance-magic labeling} of a graph $G=(V,E)$. In such a labeling, the vertices of the graph are labeled through a bijection with the elements of an Abelian group $\Gamma$. The weight of each vertex is computed as the sum (in $\Gamma$) of the labels assigned to its neighbors. If all weights are the same, then it is a \textit{$\Gamma$-distance-magic labeling}. If all weights are different, then it is a \textit{$\Gamma$-distance-antimagic labeling} \citep{AnhCicFroSimQiu}.

For ease of presentation of the results in this subsection we will create labelings using only the non-zero elements of $\Gamma$, and speak of \textit{$\Gamma^*$-distance-magic labeling}, etc. It is easy to see that these results can be amended to results on labelings with all elements of $\Gamma$ by dealing with the $0$ element accordingly.

Recall that, in a graph $G=(V,E)$, a set of vertices $M \subset V$ is a \textit{module} if for every pair of vertices $x, y$ with $x,y \in M$ there is $N(x)\setminus M = N(y) \setminus M$. In other words, a module $M$ is a set of vertices that share the same neighbors outside of $M$. Two vertices $x$ and $y$ are called \textit{twins} if $\{x,y\}$ is a module. If additionally $\{x,y\} \in E$, then they are \textit{true twins}, and they are \textit{false twins} otherwise. 

For ease of presentation, we assume that every vertex $x$ is both a false and true twin of itself (in the literature, the relation of being twins is usually considered only for distinct vertices). In this context, notice that the relation of being false twins is an equivalence relation, and the same holds for the relation of being true twins. So they define equivalence classes corresponding to inclusion maximal sets of (false or true) twins. Any set of (false or true) twins is a module.

Based on the above observations, we can create $\Gamma^*$-distance-magic and -antimagic labelings for groups and graphs that satisfy certain conditions detailed in the following propositions.

\begin{prop}\label{prop:twinm}
Let $G=(V,E)$ be a graph with a partition of the vertex set into subsets of false twins $\{V_i\}_{i=1}^t$ for a positive integer $t$. Let $m_i = |V_i|$ for every $1 \leq i \leq t$. Let $\Gamma$ be an Abelian group of order $m$ such that $\sum_{i=1}^t m_i=m-1$. If the non-zero elements of $\Gamma$ can be partitioned into zero-sum subsets $\{S_i\}_{i=1}^t$ such that $|S_i| = m_i$ for every $1 \leq i \leq t$, then $G$ has a $\Gamma^*$-distance-magic labeling.
\end{prop}
\begin{proof}
Let us assign to the vertices of every $V_i$ with $1 \leq i \leq t$ unique elements of the corresponding $S_i$, as in the statement of the proposition. It can be done, since $|V_i| = |S_i|$. Let us show that, under this labeling, the weight of every vertex in $V$ is $0$.

Take any vertex $v \in V_a$ for any $1 \leq a \leq t$. Consider the sum of labels of the neighbors of $v$. Consider any $b\neq a$ with $1\leq b \leq t$. Since $V_b$ is a module, either $v$ is adjacent to no vertices in $V_b$ or to all of them. In both cases, the contribution of $N(v) \cap V_b$ to the weight of $v$ is $0$. 

The only set of neighbors of $v$ left to consider is $N(v)\cap V_a$. $v$ has no neighbors in $V_a$ since $V_a$ is a set of false twins. So the contribution of $N(v)\cap V_a$ to the weight of $v$ is also $0$.
\end{proof}

\begin{prop}\label{prop:twinam}
Let $G=(V,E)$ be a graph with a partition of the vertex set into subsets of true twins $\{V_i\}_{i=1}^t$ for a positive integer $t$. Let $m_i = |V_i|$ for every $1 \leq i \leq t$. Let $\Gamma$ be an Abelian group of order $m$ such that $\sum_{i=1}^t m_i=m-1$. If the non-zero elements of $\Gamma$ can be partitioned into zero-sum subsets $\{S_i\}_{i=1}^t$ such that $|S_i| = m_i$ for every $1 \leq i \leq t$, then $G$ has a $\Gamma^*$-distance-antimagic labeling.
\end{prop}

\begin{proof}
Let us do the labeling as in the proof of Proposition \ref{prop:twinm}. Like therein, take any vertex $v \in V_a$ for any $1 \leq a \leq t$ and consider its weight (the sum of labels of the neighbors of $v$). Let $\gamma$ be the element of $\Gamma$ assigned to $v$.

Like in the proof of Proposition \ref{prop:twinm}, for any $b\neq a$ with $1\leq b \leq t$, the contribution of $N(v) \cap V_b$ to the weight of $v$ is $0$. But now $V_a$ is a clique. The sum of the elements of $\Gamma$ assigned to the vertices in $V_a$ is $0$. So the sum of the labels assigned to $N(v) \cap V_a$ is $-\gamma$, the element of $\Gamma$ inverse to $\gamma$.

Since the assignment of non-zero elements of $\Gamma$ to the elements of $V$ is a bijection, by uniqueness of inverses, the sums of labels of neighbors of the vertices in $V$ are unique too.
\end{proof}

The following corollaries are direct consequences of Propositions \ref{prop:twinm} and \ref{prop:twinam}, Corollary \ref{nowemain}, and the properties of twins mentioned above. Note that we say that a vertex has $k$ twins if it has $k$ twins different from itself. 

\begin{cor}\label{cor_twinm}
Let $G=(V,E)$ be a graph in which every vertex has at least $3$ false twins. Let $\Gamma$ be a finite Abelian group of order $|V|+1$ such that $|I(\Gamma)|\neq 1$. Then $G$ has a $\Gamma^*$-distance-magic labeling.
\end{cor}

Notice that Corollary \ref{cor_twinm} extends the results on $\Gamma$-distance-magic labelings of complete $k$-partite graphs presented by \citet{CicrSuch}.

\begin{cor}\label{cor_twinam}
Let $G=(V,E)$ be a graph in which every vertex has at least $3$ true twins. Let $\Gamma$ be a finite Abelian group of order $|V|+1$ such that $|I(\Gamma)|\neq 1$. Then $G$ has a $\Gamma^*$-distance-antimagic labeling.
\end{cor}

For the last two results of this subsection, let us recall the definition of a $G$-join introduced by~\citet{Sabidussi61}.

\begin{dfn}[\citep{Sabidussi61}]
Let $G=(V,E)$ be a graph and $\{X_v\}_{v\in V}$ a collection of graphs indexed by $V$. The $G$-join of $\{X_v\}_{v\in V}$ is the graph $H$ given by
$V(H) = \{(x,v) \mid x \in X_v, v\in V\}$, 
$E(H)=\{\{(x,v),(x',v')\} \mid \{v,v'\} \in E \text{, or } v=v'  \text{ and } \{x,x'\} \in E(X_v) \}$.
\end{dfn}

Notice that, given a graph $G=(V,E)$, in the graph $H$ that is a $G$-join of $\{X_v\}_{v\in V}$, the set of vertices $\{(x,v) \mid x \in X_v\}$ for every $v\in V$ is a module in $H$. Moreover, if $X_v$ is an empty graph, then the vertices of $H$ in $\{(x,v) \mid x \in X_v\}$ are false twins. If $X_v$ is a complete graph, then the vertices of $H$ in $\{(x,v) \mid x \in X_v\}$ are true twins. 

Recall that $X$ is an \textit{empty graph} if $E(X)=\emptyset$. In other words, its vertex set is stable. We use $K_k$ to denote the complete graph on $k$ vertices and $\overline{K_k}$ for the empty graph on $k$ vertices. The following corollaries are direct consequences of Corollaries \ref{cor_twinm} and \ref{cor_twinam}.



\begin{cor}\label{cor_blowuptwinm}
Let $G=(V,E)$ be any graph. Then there exists a $G$-join $H$ of a sequence of empty graphs of appropriate orders such that every vertex in $H$ has at least $3$ false twins. In particular, $H$ has a $\Gamma^*$-distance-magic labeling for every finite Abelian group $\Gamma$ with $|\Gamma|=|V(H)|+1$ and $|I(\Gamma)|\neq 1$.
\end{cor}


\begin{cor}\label{cor_blowuptwinam}
Let $G=(V,E)$ be any graph. Then there exists a $G$-join $H$ of a sequence of complete graphs of appropriate orders such that every vertex in $H$ has at least $3$ true twins. In particular, $H$ has a $\Gamma^*$-distance-antimagic labeling for every finite Abelian group $\Gamma$ with $|\Gamma|=|V(H)|+1$ and $|I(\Gamma)|\neq 1$.
\end{cor}


Note that the equivalence classes of twins in a graph can be found in linear time~\citep{HABIB201041}. So, if a respective zero-sum partition of $\Gamma^*$ is given or it can be found efficiently, then we can efficiently compute the corresponding labeling.

{
Note that, based on Corollary \ref{cor_m}, the results presented in this subsection can be made stronger in particular cases. If the order $n$ of the graph satisfies $n+1 \geq 4$ and $(n+1) \bmod 4 \neq 2$, then it is enough to only require at least one twin for every vertex. If additionally $(n+1) \bmod 4 \in \{1,3\}$, then any Abelian group of order $n+1$ can be used for the labeling. If $(n+1) \bmod 4 = 0$, then there exists a particular group that can be used. In the case of $(n+1) \bmod 4 = 2$, no group with a suitable zero-sum partition exists, but the graph can be ``fixed'' by adding an additional twin for one of the vertices, and thus reducing to the case $(n+1) \bmod 4 = 3$.
}

\section{Final Remarks}\label{sec:final}
On the one hand we showed that Conjecture~\ref{conjectureT} is not true in general, but, on the other hand, we have achieved important progress towards proving Conjecture~\ref{conjecture}. Recall that any group $\Gamma$ can be factorized as $\Gamma\cong L\times H$, where $L$ is the Sylow $2$-group of $\Gamma$ and the order of $H$ is odd. In this context, towards proving Conjecture~\ref{conjecture}, we leave open only the question if $\Gamma$ has not only $4$-ZSPP, but also $3$-ZSPP, in the case where $(|H| \bmod{6}) = 5$.

Finally, let us complement Definition \ref{dfn:skolem}. Let $S$ be a subset of cardinality $m=6k+s$, for a positive integer $k$ and $s\in\{0,2,4\}$, of a finite Abelian group $\Gamma$. A partition of $S$ into $k$ good $6$-subsets and $s/2$ zero-sum $2$-subsets is called a {\em Skolem partition of $S$}. It is known that $\zet_m^*\setminus I(\zet_m)${, for $m=6k+s$ with any positive integer $k$,} has a Skolem partition when $s\in\{0,4\}$, but the situation is slightly different for $s=2$. For $m\equiv 2$ or $8\pmod{24}$, there exists a Skolem partition, whereas for $m\equiv 14$ or $20\pmod{24}$, such a partition does not exist~\citep[Lemma 4]{Tannenbaum1}). Note that there exist groups $\Gamma$ with $|I(\Gamma)|>1$ such that the set $R =\Gamma^* \setminus I(\Gamma)$ has a Skolem partition. For instance, let $\Gamma\cong(\zet_2)^{\eta}\times H$  for some natural number $\eta>1$ and $|H|\equiv 1\pmod 6$. By Theorem~\ref{Tannenbaum1}, there exists a partition of $H^*$ into good $6$-subsets:
$$H^* = \bigcup\limits_{i=1}^{|H^*|/6}\{b_i, c_i,-b_i - c_i,-b_i,-c_i, b_i + c_i\}.$$

Let $L=(\zet_2)^{\eta}$. Note that $I(\Gamma)\cong L^*$. By  Lemma~\ref{bijection}, there exist $\phi,\varphi\in$Bij$(L)$ such that $a+\phi(a)+\varphi(a)=0$  for every $a\in L$. Thus, 
$$R = \bigcup \limits_{a\in L}\bigcup\limits_{i=1}^{|H^*|/6}\{(a,b_i),(\phi(a), c_i),(\varphi(a),-b_i-c_i),(-a,-b_i),(-\phi(a),-c_i),(-\varphi(a), b_i+c_i)\}.$$
We intuit these are not the only groups for which the set $R$ has a Skolem partition. This motivates the following problem.  
\begin{prob}\label{conjectureSK}Characterize finite Abelian groups $\Gamma$ for which $R =\Gamma^* \setminus I(\Gamma)$ has a Skolem partition.
\end{prob}

{
As reflected in Proposition~\ref{prop:twinm}, given a graph $G=(V,E)$ with a partition of the vertex set into subsets of false twins,
existence of a zero-sum partition of an Abelian group $\Gamma$ into subsets of corresponding cardinalities is a sufficient condition for existence of a $\Gamma^*$-distance-magic labeling of $G$. One can easily see that this condition is not necessary. For example, consider the group $\zet_6$. There exists no zero-sum partition of $\zet_6^*$ as required in Proposition~\ref{prop:twinm}. On the other hand, $\{\{1,2\}, \{3\}, \{4,5\} \}$ is a constant-sum partition of $\zet^*_6$, with constant sum $3$, which leads to a $\zet_6^*$-distance-magic labeling
of the complete tripartite graph $K_{1,2,2}$. So far, constant-sum partitions of Abelian groups have been studied only for the case of partitioning into three subsets~\citep{Cic3}. It would be interesting to consider constant-sum partitions into more than three subsets.
}

{On the algorithmic side, given a finite Abelian group $\Gamma$ of order $m$ and an integer partition $\{m_i\}_{i=1}^t$ of $m-1$, it would be interesting to know the time complexity (and an efficient algorithm) of finding a zero-sum partition that realizes $\{m_i\}_{i=1}^t$ (if it exists).}
\section{Acknowledgments}

The authors would like to thank the anonymous reviewers for their careful reading of the manuscript and their insightful comments and suggestions.
\section{Statements and Declarations}
\begin{itemize}
\item Funding. This work was supported by the program ``Excellence initiative – research university'' for the AGH University.
\item Competing interests. The authors have no relevant financial or non-financial interests to disclose.
\item Availability of data and materials. All data generated or analyzed during this study are included in this published article. 
\end{itemize}

\nocite{*}
\bibliographystyle{abbrvnat}
\bibliography{grupy}

\begin{thebibliography}{51}
\providecommand{\natexlab}[1]{#1}
\providecommand{\url}[1]{\texttt{#1}}
\expandafter\ifx\csname urlstyle\endcsname\relax
  \providecommand{\doi}[1]{doi: #1}\else
  \providecommand{\doi}{doi: \begingroup \urlstyle{rm}\Url}\fi

\bibitem[Aigner and Triesch(1990)]{ref_AigTri2}
M.~Aigner and E.~Triesch.
\newblock Irregular assignments of trees and forests.
\newblock \emph{SIAM Journal on Discrete Mathematics}, 3\penalty0 (4):\penalty0
  439--449, 1990.

\bibitem[Aigner and Triesch(1994)]{ref_AigTri}
M.~Aigner and E.~Triesch.
\newblock Codings of graphs with binary edge labels.
\newblock \emph{Graphs and Combinatorics}, 10\penalty0 (1):\penalty0 1--10,
  1994.

\bibitem[Alavi et~al.(1987)Alavi, Boals, Chartrand, Erdos, and
  Oellermann]{ref_AlaBoaChaErd}
Y.~Alavi, A.~J. Boals, G.~Chartrand, P.~Erdos, and O.~R. Oellermann.
\newblock The ascending subgraph decomposition problem.
\newblock \emph{Congressus Numerantium}, 58\penalty0 (7):\penalty0 14, 1987.

\bibitem[Alon(1999)]{ref_Alon}
N.~Alon.
\newblock Combinatorial nullstellensatz.
\newblock \emph{Combinatorics, Probability and Computing}, 8\penalty0
  (1-2):\penalty0 7--29, 1999.

\bibitem[Anholcer and Cichacz(2017)]{ref_AnhCic}
M.~Anholcer and S.~Cichacz.
\newblock Group irregular labelings of disconnected graphs.
\newblock \emph{Contributions to Discrete Mathematics}, 12\penalty0 (2), 2017.

\bibitem[Anholcer et~al.(2015)Anholcer, Cichacz, and Milani\v{c}]{ref_AnhCic1}
M.~Anholcer, S.~Cichacz, and M.~Milani\v{c}.
\newblock Group irregularity strength of connected graphs.
\newblock \emph{Journal of Combinatorial Optimization}, 30\penalty0
  (1):\penalty0 1--17, 2015.

\bibitem[Anholcer et~al.(2021)Anholcer, Cichacz, Froncek, Simanjuntak, and
  Qiu]{AnhCicFroSimQiu}
M.~Anholcer, S.~Cichacz, D.~Froncek, R.~Simanjuntak, and J.~Qiu.
\newblock Group distance magic and antimagic hypercubes.
\newblock \emph{Discrete Mathematics}, 344\penalty0 (12):\penalty0 112625,
  2021.

\bibitem[Ba{\v{c}}a et~al.(2019)Ba{\v{c}}a, Miller, Ryan, and
  Semani{\v{c}}ov{\'a}-Fe{\v{n}}ov{\v{c}}{\'\i}kov{\'a}]{BMRS}
M.~Ba{\v{c}}a, M.~Miller, J.~Ryan, and
  A.~Semani{\v{c}}ov{\'a}-Fe{\v{n}}ov{\v{c}}{\'\i}kov{\'a}.
\newblock \emph{Magic and Antimagic Graphs}.
\newblock Springer, 2019.

\bibitem[Bateman(1950)]{Bateman}
T.~Bateman, P.
\newblock A remark on infinite groups.
\newblock \emph{Amer. Math. Monthly}, 57:\penalty0 623--624, 1950.

\bibitem[Bondy and Murty(2008)]{BM}
A.~Bondy and U.~Murty.
\newblock \emph{Graph Theory}.
\newblock Springer, 2008.

\bibitem[Brandst{\"a}dt et~al.(1999)Brandst{\"a}dt, Le, and
  Spinrad]{Brandstadt}
A.~Brandst{\"a}dt, V.~B. Le, and J.~P. Spinrad.
\newblock \emph{Graph Classes: A Survey}.
\newblock SIAM, 1999.

\bibitem[Caccetta and Jia(1997)]{ref_CaccJia}
L.~Caccetta and R.-Z. Jia.
\newblock Binary labeling of graphs.
\newblock \emph{Graphs and Combinatorics}, 13\penalty0 (2):\penalty0 119--137,
  1997.

\bibitem[Chartrand et~al.(1988)Chartrand, Jacobson, Lehel, Oellermann, Ruiz,
  and Saba]{ref_ChaJacLehOelRuiSab1}
G.~Chartrand, M.~Jacobson, J.~Lehel, O.~Oellermann, S.~Ruiz, and F.~Saba.
\newblock Irregular networks.
\newblock \emph{Congressus Numerantium}, 64:\penalty0 187--192, 1988.

\bibitem[Cichacz(2014)]{CichaczOM}
S.~Cichacz.
\newblock Note on group distance magic complete bipartite graphs.
\newblock \emph{Open Mathematics}, 12\penalty0 (3):\penalty0 529--533, 2014.

\bibitem[Cichacz(2017)]{Cic3}
S.~Cichacz.
\newblock On zero sum-partition of {Abelian} groups into three sets and group
  distance magic labeling.
\newblock \emph{Ars Mathematica Contemporanea}, 13\penalty0 (2):\penalty0
  417--425, 2017.

\bibitem[Cichacz(2018)]{CicZ}
S.~Cichacz.
\newblock Zero sum partition of {Abelian} groups into sets of the same order
  and its applications.
\newblock \emph{The Electronic Journal of Combinatorics}, 25\penalty0
  (1):\penalty0 1--20, 2018.

\bibitem[Cichacz and Suchan(2023)]{CicrSuch}
S.~Cichacz and K.~Suchan.
\newblock Zero-sum partitions of {Abelian} groups of order $2^n$.
\newblock \emph{Discrete Mathematics \& Theoretical Computer Science},
  25\penalty0 (1):\penalty0 \#6, 2023.

\bibitem[Cichacz and Tuza(2022)]{CicTuz}
S.~Cichacz and Z.~Tuza.
\newblock Realization of digraphs in {Abelian} groups and its consequences.
\newblock \emph{Journal of Graph Theory}, 100\penalty0 (2):\penalty0 331--345,
  2022.

\bibitem[Cichacz et~al.(2016)Cichacz, Froncek, Sugeng, and Zhou]{CFSZ2016}
S.~Cichacz, D.~Froncek, K.~Sugeng, and S.~Zhou.
\newblock Group distance magic and antimagic graphs.
\newblock \emph{Acta Mathematica Sinica, English Series}, 32\penalty0
  (10):\penalty0 1159--1176, 2016.

\bibitem[Combe et~al.(2004)Combe, Nelson, and Palmer]{ref_ComNelPal}
D.~Combe, A.~M. Nelson, and W.~D. Palmer.
\newblock Magic labellings of graphs over finite {Abelian} groups.
\newblock \emph{Australasian Journal of Combinatorics}, 29:\penalty0 259--272,
  2004.

\bibitem[Diestel(2017)]{Diestel}
R.~Diestel.
\newblock \emph{{Graph Theory}}, volume 173 of \emph{{Graduate Texts in
  Mathematics}}.
\newblock Springer, 2017.

\bibitem[Egawa(1997)]{Egawa}
Y.~Egawa.
\newblock Graph labelings in elementary {Abelian} 2-groups.
\newblock \emph{Tokyo Journal of Mathematics}, 20\penalty0 (2):\penalty0
  365--379, 1997.

\bibitem[Freyberg(2020)]{FREYBERG2019}
B.~Freyberg.
\newblock On constant sum partitions and applications to distance magic-type
  graphs.
\newblock \emph{AKCE International Journal of Graphs and Combinatorics},
  17\penalty0 (1):\penalty0 373--379, 2020.

\bibitem[Friedlander et~al.(1981)Friedlander, Gordon, and
  Tannenbaum]{Tannenbaum3}
R.~J. Friedlander, B.~Gordon, and P.~Tannenbaum.
\newblock Partitions of groups and complete mappings.
\newblock \emph{Pacific Journal of Mathematics}, 92:\penalty0 283--293, 1981.

\bibitem[Froncek(2013{\natexlab{a}})]{Fro}
D.~Froncek.
\newblock Handicap distance antimagic graphs and incomplete tournaments.
\newblock \emph{AKCE Int. J. Graphs Comb.}, 10, No. 2:\penalty0 119--127,
  2013{\natexlab{a}}.

\bibitem[Froncek(2013{\natexlab{b}})]{Fro1}
D.~Froncek.
\newblock Group distance magic labeling of cartesian product of cycles.
\newblock \emph{Australasian Journal of Combinatorics}, 55:\penalty0 167--174,
  2013{\natexlab{b}}.

\bibitem[Froncek et~al.(2006)Froncek, Kov\'{a}\v{r}, and
  Kov\'{a}\v{r}ov\'{a}]{FKK1}
D.~Froncek, P.~Kov\'{a}\v{r}, and T.~Kov\'{a}\v{r}ov\'{a}.
\newblock Fair incomplete tournaments.
\newblock \emph{Bulletin of the ICA}, 48:\penalty0 31--33, 2006.

\bibitem[Fukuchi(1998)]{Fukuchi}
Y.~Fukuchi.
\newblock Graph labelings in elementary {Abelian} groups.
\newblock \emph{Discrete mathematics}, 189\penalty0 (1-3):\penalty0 117--122,
  1998.

\bibitem[Gallian(2016)]{Gallian}
J.~Gallian.
\newblock \emph{Contemporary Abstract Algebra}.
\newblock Cengage Learning, 2016.

\bibitem[Habib and Paul(2010)]{HABIB201041}
M.~Habib and C.~Paul.
\newblock A survey of the algorithmic aspects of modular decomposition.
\newblock \emph{Computer Science Review}, 4\penalty0 (1):\penalty0 41--59,
  2010.
\newblock ISSN 1574-0137.

\bibitem[Hall and Paige(1955)]{Hall2}
M.~Hall and L.~J. Paige.
\newblock Complete mappings of finite groups.
\newblock \emph{Pacific J. Math.}, 5:\penalty0 541--549, 1955.

\bibitem[Hanani(1960)]{ref_Han}
H.~Hanani.
\newblock Note on steiner triple systems.
\newblock \emph{Mathematica Scandinavica}, 8:\penalty0 154--156, 1960.

\bibitem[Hulett et~al.(2008)Hulett, Will, and Woeginger]{HULETT2008594}
H.~Hulett, T.~G. Will, and G.~J. Woeginger.
\newblock Multigraph realizations of degree sequences: Maximization is easy,
  minimization is hard.
\newblock \emph{Operations Research Letters}, 36\penalty0 (5):\penalty0
  594--596, 2008.

\bibitem[Imrich and Klav\v{z}ar(2000)]{IK}
W.~Imrich and S.~Klav\v{z}ar.
\newblock \emph{Product Graphs: Structure and Recognition}.
\newblock John Wiley \& Sons, New York, 2000.

\bibitem[Ivan{\v{c}}o and Jendrol(2006)]{ref_AmaTog}
J.~Ivan{\v{c}}o and S.~Jendrol.
\newblock Total edge irregularity strength of trees.
\newblock \emph{Discussiones mathematicae graph theory}, 26\penalty0
  (3):\penalty0 449--456, 2006.

\bibitem[Kalkowski et~al.(2011)Kalkowski, Karo{\'n}ski, and
  Pfender]{ref_KalKarPfe1}
M.~Kalkowski, M.~Karo{\'n}ski, and F.~Pfender.
\newblock A new upper bound for the irregularity strength of graphs.
\newblock \emph{SIAM journal on discrete mathematics}, 25\penalty0
  (3):\penalty0 1319--1321, 2011.

\bibitem[Kaplan et~al.(2009)Kaplan, Lev, and Roditty]{KLR}
G.~Kaplan, A.~Lev, and Y.~Roditty.
\newblock On zero-sum partitions and anti-magic trees.
\newblock \emph{Discrete Mathematics}, 309\penalty0 (8):\penalty0 2010--2014,
  2009.

\bibitem[Llad{\'o} and Moragas(2012)]{ref_LlaMor2}
A.~Llad{\'o} and J.~Moragas.
\newblock On the modular sumset partition problem.
\newblock \emph{European Journal of Combinatorics}, 33\penalty0 (4):\penalty0
  427--434, 2012.

\bibitem[Mann(1944)]{Mann}
H.~Mann.
\newblock On orthogonal latin squares.
\newblock \emph{Bull. Amer. Math. Soc.}, 50:\penalty0 249--257, 1944.

\bibitem[Marr and Wallis(2013)]{MW}
A.~M. Marr and W.~Wallis.
\newblock \emph{Magic Graphs}.
\newblock Birkhäuser Basel, 2013.

\bibitem[Müyesser and Pokrovskiy(2022)]{MP}
A.~Müyesser and A.~Pokrovskiy.
\newblock A random {Hall-Paige} conjecture.
\newblock arXiv preprint, 2022.
\newblock https://arxiv.org/abs/2204.09666.

\bibitem[Ringel and Hartsfield(1994)]{HR}
G.~Ringel and N.~Hartsfield.
\newblock \emph{Pearls in Graph Theory}.
\newblock United Kingdom: Academic Press Limited, 1994.

\bibitem[Sabidussi(1961)]{Sabidussi61}
G.~Sabidussi.
\newblock Graph derivatives.
\newblock \emph{Mathematische Zeitschrift}, 76:\penalty0 385--401, 1961.

\bibitem[Skolem(1957)]{ref_Sko57}
T.~Skolem.
\newblock On certain distributions of integers in pairs with given differences.
\newblock \emph{Mathematica Scandinavica}, 5\penalty0 (1):\penalty0 57--68,
  1957.

\bibitem[Skolem(1958)]{ref_Sko58}
T.~Skolem.
\newblock Some remarks on the triple systems of {Steiner}.
\newblock \emph{Mathematica Scandinavica}, 6:\penalty0 273--280, 1958.

\bibitem[Stanley(1973)]{ref_Sta}
R.~Stanley.
\newblock Linear homogeneous diophantine equations and magic labelings of
  graphs.
\newblock \emph{Duke Mathematical Journal}, 40:\penalty0 607--632, 1973.

\bibitem[Tannenbaum(1981)]{Tannenbaum1}
P.~Tannenbaum.
\newblock Partitions of {Abelian} groups into sets with zero sums.
\newblock \emph{Congressus Numerantium}, 33:\penalty0 341--348, 1981.

\bibitem[Tannenbaum(1983)]{Tannenbaum2}
P.~Tannenbaum.
\newblock Partitions of $\zet_2^n$.
\newblock \emph{SIAM Journal on Algebraic Discrete Methods}, 4\penalty0
  (1):\penalty0 22--29, 1983.

\bibitem[Tuza(1990)]{Tuza}
Z.~Tuza.
\newblock Encoding the vertices of a graph with binary edge labels.
\newblock In \emph{Sequences}, pages 287--299. Springer, 1990.

\bibitem[Wilcox(2009)]{Wilcox}
S.~Wilcox.
\newblock Reduction of the hall–paige conjecture to sporadic simple groups.
\newblock \emph{J. Algebra}, 321:\penalty0 1407--1428, 2009.

\bibitem[Zeng(2015)]{Zeng}
X.~Zeng.
\newblock On zero-sum partitions of {Abelian} groups.
\newblock \emph{Integers}, 15:\penalty0 Paper No. A44: 16 pp, 2015.

\end{thebibliography}

\newpage
\begin{appendices}
Throughout the annexes, the notation ``\verb|a*3  b*4  c*5|'' refers to $a$ subsets of size $3$, $b$ subsets of size $4$, and $c$ subsets of size $5$. In each subset partition, we present one set per line. Each tuple represents one element of the partitioned set, with the elements of the tuple following the order of the direct product.

\section{Zero-sum partitions of 
\texorpdfstring{$\left((\zet_2)^3\times\zet_3\right)^*$}
{((Z\_2)\textasciicircum 3 x Z\_3)*}
with 
\texorpdfstring{$b\leq2$}
{b<=2}
}\label{a:2223}

\footnotesize
\begin{verbatim}
[2, 2, 2, 3]
[5, 2, 0]
[
[[0, 0, 1, 0], [0, 1, 0, 0], [0, 1, 1, 0]],
[[1, 0, 0, 0], [0, 0, 0, 1], [1, 0, 0, 2]],
[[1, 1, 1, 0], [0, 0, 1, 1], [1, 1, 0, 2]],
[[0, 1, 0, 1], [1, 0, 0, 1], [1, 1, 0, 1]],
[[0, 0, 1, 2], [0, 1, 0, 2], [0, 1, 1, 2]],
[[1, 0, 1, 1], [1, 1, 1, 1], [1, 0, 1, 2], [1, 1, 1, 2]],
[[1, 0, 1, 0], [1, 1, 0, 0], [0, 1, 1, 1], [0, 0, 0, 2]]
]
A partition for subsets of sizes:  5*3  2*4  0*5

[2, 2, 2, 3]
[6, 0, 1]
[
[[0, 0, 1, 0], [0, 1, 0, 0], [0, 1, 1, 0]],
[[1, 0, 0, 0], [0, 0, 0, 1], [1, 0, 0, 2]],
[[1, 1, 1, 0], [0, 0, 1, 1], [1, 1, 0, 2]],
[[0, 1, 0, 1], [1, 0, 0, 1], [1, 1, 0, 1]],
[[0, 0, 1, 2], [0, 1, 0, 2], [0, 1, 1, 2]],
[[1, 1, 0, 0], [0, 1, 1, 1], [1, 0, 1, 2]],
[[1, 0, 1, 0], [1, 0, 1, 1], [1, 1, 1, 1], [0, 0, 0, 2], [1, 1, 1, 2]]
]
A partition for subsets of sizes:  6*3  0*4  1*5

[2, 2, 2, 3]
[3, 1, 2]
[
[[0, 0, 1, 0], [0, 1, 0, 0], [0, 1, 1, 0]],
[[1, 0, 0, 0], [0, 0, 0, 1], [1, 0, 0, 2]],
[[1, 1, 1, 0], [0, 0, 1, 1], [1, 1, 0, 2]],
[[0, 1, 0, 1], [0, 1, 1, 1], [0, 0, 0, 2], [0, 0, 1, 2]],
[[1, 1, 0, 0], [1, 0, 0, 1], [1, 1, 1, 1], [0, 1, 0, 2], [1, 1, 1, 2]],
[[1, 0, 1, 0], [1, 0, 1, 1], [1, 1, 0, 1], [0, 1, 1, 2], [1, 0, 1, 2]]
]
A partition for subsets of sizes:  3*3  1*4  2*5

[2, 2, 2, 3]
[0, 2, 3]
[
[[0, 0, 1, 0], [0, 1, 0, 0], [1, 0, 0, 0], [1, 1, 1, 0]],
[[1, 0, 1, 0], [1, 1, 0, 0], [0, 0, 0, 1], [0, 1, 1, 2]],
[[0, 0, 1, 1], [0, 1, 0, 1], [0, 1, 1, 1], [1, 0, 0, 1], [1, 0, 0, 2]],
[[1, 1, 0, 1], [0, 0, 0, 2], [0, 0, 1, 2], [0, 1, 0, 2], [1, 0, 1, 2]],
[[0, 1, 1, 0], [1, 0, 1, 1], [1, 1, 1, 1], [1, 1, 0, 2], [1, 1, 1, 2]]
]
A partition for subsets of sizes:  0*3  2*4  3*5
\end{verbatim}

\newpage

\begin{verbatim}
[2, 2, 2, 3]
[1, 0, 4]
[
[[0, 0, 1, 0], [0, 1, 0, 0], [0, 1, 1, 0]],
[[1, 0, 0, 0], [1, 0, 1, 0], [1, 1, 0, 0], [0, 0, 0, 1], [1, 1, 1, 2]],
[[0, 0, 1, 1], [0, 1, 0, 1], [0, 1, 1, 1], [1, 0, 0, 1], [1, 0, 0, 2]],
[[1, 1, 0, 1], [0, 0, 0, 2], [0, 0, 1, 2], [0, 1, 0, 2], [1, 0, 1, 2]],
[[1, 1, 1, 0], [1, 0, 1, 1], [1, 1, 1, 1], [0, 1, 1, 2], [1, 1, 0, 2]]
]
A partition for subsets of sizes:  1*3  0*4  4*5
\end{verbatim}

\normalsize
\section{Zero-sum partitions of
\texorpdfstring{$\left((\zet_2)^3\times\zet_5\right)^*$}
{((Z\_2)\textasciicircum 3 x Z\_5)*}
 with 
\texorpdfstring{$b\leq4$}
{b<=4}
}\label{a:2225}
\footnotesize

\begin{verbatim}
[2, 2, 2, 5]
[13, 0, 0]
[
[[0, 0, 1, 0], [0, 1, 0, 0], [0, 1, 1, 0]],
[[1, 0, 0, 0], [0, 0, 0, 1], [1, 0, 0, 4]],
[[1, 1, 1, 0], [0, 0, 1, 1], [1, 1, 0, 4]],
[[0, 1, 0, 1], [0, 1, 1, 1], [0, 0, 1, 3]],
[[1, 0, 1, 1], [1, 1, 0, 1], [0, 1, 1, 3]],
[[0, 0, 1, 2], [0, 0, 0, 4], [0, 0, 1, 4]],
[[1, 0, 0, 2], [0, 1, 1, 4], [1, 1, 1, 4]],
[[1, 0, 0, 1], [0, 1, 0, 2], [1, 1, 0, 2]],
[[0, 1, 0, 3], [1, 1, 1, 3], [1, 0, 1, 4]],
[[1, 0, 0, 3], [1, 1, 0, 3], [0, 1, 0, 4]],
[[1, 1, 0, 0], [0, 1, 1, 2], [1, 0, 1, 3]],
[[1, 1, 1, 1], [0, 0, 0, 2], [1, 1, 1, 2]],
[[1, 0, 1, 0], [1, 0, 1, 2], [0, 0, 0, 3]]
]
A partition for subsets of sizes: 13*3  0*4  0*5

[2, 2, 2, 5]
[9, 3, 0]
[
[[0, 0, 1, 0], [0, 1, 0, 0], [0, 1, 1, 0]],
[[1, 0, 0, 0], [0, 0, 0, 1], [1, 0, 0, 4]],
[[1, 1, 1, 0], [0, 0, 1, 1], [1, 1, 0, 4]],
[[0, 1, 0, 1], [0, 1, 1, 1], [0, 0, 1, 3]],
[[1, 0, 1, 1], [1, 1, 0, 1], [0, 1, 1, 3]],
[[0, 0, 1, 2], [0, 0, 0, 4], [0, 0, 1, 4]],
[[1, 0, 0, 2], [0, 1, 1, 4], [1, 1, 1, 4]],
[[1, 1, 1, 2], [0, 1, 0, 4], [1, 0, 1, 4]],
[[1, 0, 0, 1], [0, 1, 0, 2], [1, 1, 0, 2]],
[[0, 0, 0, 2], [0, 1, 1, 2], [1, 0, 0, 3], [1, 1, 1, 3]],
[[1, 1, 1, 1], [0, 0, 0, 3], [0, 1, 0, 3], [1, 0, 1, 3]],
[[1, 0, 1, 0], [1, 1, 0, 0], [1, 0, 1, 2], [1, 1, 0, 3]]
]
A partition for subsets of sizes:  9*3  3*4  0*5
\end{verbatim}

\newpage

\begin{verbatim}
[2, 2, 2, 5]
[10, 1, 1]
[
[[0, 0, 1, 0], [0, 1, 0, 0], [0, 1, 1, 0]],
[[1, 0, 0, 0], [0, 0, 0, 1], [1, 0, 0, 4]],
[[1, 1, 1, 0], [0, 0, 1, 1], [1, 1, 0, 4]],
[[0, 1, 0, 1], [0, 1, 1, 1], [0, 0, 1, 3]],
[[1, 0, 1, 1], [1, 1, 0, 1], [0, 1, 1, 3]],
[[0, 0, 1, 2], [0, 0, 0, 4], [0, 0, 1, 4]],
[[1, 0, 0, 2], [0, 1, 1, 4], [1, 1, 1, 4]],
[[1, 1, 1, 2], [0, 1, 0, 4], [1, 0, 1, 4]],
[[1, 0, 0, 1], [0, 1, 0, 2], [1, 1, 0, 2]],
[[1, 1, 0, 0], [0, 0, 0, 2], [1, 1, 0, 3]],
[[1, 1, 1, 1], [0, 0, 0, 3], [0, 1, 0, 3], [1, 0, 1, 3]],
[[1, 0, 1, 0], [0, 1, 1, 2], [1, 0, 1, 2], [1, 0, 0, 3], [1, 1, 1, 3]]
]
A partition for subsets of sizes: 10*3  1*4  1*5

[2, 2, 2, 5]
[6, 4, 1]
[
[[0, 0, 1, 0], [0, 1, 0, 0], [0, 1, 1, 0]],
[[1, 0, 0, 0], [0, 0, 0, 1], [1, 0, 0, 4]],
[[1, 1, 1, 0], [0, 0, 1, 1], [1, 1, 0, 4]],
[[0, 1, 0, 1], [0, 1, 1, 1], [0, 0, 1, 3]],
[[1, 0, 1, 1], [1, 1, 0, 1], [0, 1, 1, 3]],
[[0, 0, 1, 2], [0, 0, 0, 4], [0, 0, 1, 4]],
[[0, 1, 1, 2], [1, 0, 0, 2], [1, 0, 1, 2], [0, 1, 0, 4]],
[[1, 0, 0, 1], [0, 0, 0, 3], [0, 1, 0, 3], [1, 1, 0, 3]],
[[1, 1, 1, 1], [1, 1, 1, 2], [1, 0, 1, 3], [1, 0, 1, 4]],
[[1, 1, 0, 0], [0, 1, 0, 2], [0, 1, 1, 4], [1, 1, 1, 4]],
[[1, 0, 1, 0], [0, 0, 0, 2], [1, 1, 0, 2], [1, 0, 0, 3], [1, 1, 1, 3]]
]
A partition for subsets of sizes:  6*3  4*4  1*5

[2, 2, 2, 5]
[7, 2, 2]
[
[[0, 0, 1, 0], [0, 1, 0, 0], [0, 1, 1, 0]],
[[1, 0, 0, 0], [0, 0, 0, 1], [1, 0, 0, 4]],
[[1, 1, 1, 0], [0, 0, 1, 1], [1, 1, 0, 4]],
[[0, 1, 0, 1], [0, 1, 1, 1], [0, 0, 1, 3]],
[[1, 0, 1, 1], [1, 1, 0, 1], [0, 1, 1, 3]],
[[0, 0, 1, 2], [0, 0, 0, 4], [0, 0, 1, 4]],
[[1, 0, 0, 2], [0, 1, 1, 4], [1, 1, 1, 4]],
[[1, 1, 0, 2], [1, 1, 1, 2], [1, 0, 0, 3], [1, 0, 1, 3]],
[[1, 1, 1, 1], [0, 0, 0, 2], [0, 1, 0, 3], [1, 0, 1, 4]],
[[0, 1, 1, 2], [0, 0, 0, 3], [1, 1, 0, 3], [1, 1, 1, 3], [0, 1, 0, 4]],
[[1, 0, 1, 0], [1, 1, 0, 0], [1, 0, 0, 1], [0, 1, 0, 2], [1, 0, 1, 2]]
]
A partition for subsets of sizes:  7*3  2*4  2*5
\end{verbatim}

\newpage

\begin{verbatim}
[2, 2, 2, 5]
[8, 0, 3]
[
[[0, 0, 1, 0], [0, 1, 0, 0], [0, 1, 1, 0]],
[[1, 0, 0, 0], [0, 0, 0, 1], [1, 0, 0, 4]],
[[1, 1, 1, 0], [0, 0, 1, 1], [1, 1, 0, 4]],
[[0, 1, 0, 1], [0, 1, 1, 1], [0, 0, 1, 3]],
[[1, 0, 1, 1], [1, 1, 0, 1], [0, 1, 1, 3]],
[[0, 0, 1, 2], [0, 0, 0, 4], [0, 0, 1, 4]],
[[1, 0, 0, 2], [0, 1, 1, 4], [1, 1, 1, 4]],
[[1, 1, 1, 2], [0, 1, 0, 4], [1, 0, 1, 4]],
[[1, 0, 0, 1], [1, 1, 1, 1], [0, 0, 0, 2], [1, 0, 0, 3], [1, 1, 1, 3]],
[[1, 1, 0, 0], [0, 1, 0, 2], [1, 1, 0, 2], [0, 0, 0, 3], [0, 1, 0, 3]],
[[1, 0, 1, 0], [0, 1, 1, 2], [1, 0, 1, 2], [1, 0, 1, 3], [1, 1, 0, 3]]
]
A partition for subsets of sizes:  8*3  0*4  3*5

[2, 2, 2, 5]
[4, 3, 3]
[
[[0, 0, 1, 0], [0, 1, 0, 0], [0, 1, 1, 0]],
[[1, 0, 0, 0], [0, 0, 0, 1], [1, 0, 0, 4]],
[[1, 1, 1, 0], [0, 0, 1, 1], [1, 1, 0, 4]],
[[0, 1, 0, 1], [0, 1, 1, 1], [0, 0, 1, 3]],
[[1, 0, 1, 1], [1, 1, 0, 1], [1, 1, 1, 1], [1, 0, 0, 2]],
[[0, 0, 1, 2], [0, 1, 0, 2], [0, 1, 1, 2], [0, 0, 0, 4]],
[[1, 0, 1, 2], [1, 1, 0, 2], [0, 0, 0, 3], [0, 1, 1, 3]],
[[1, 0, 0, 1], [0, 1, 0, 3], [1, 0, 0, 3], [0, 0, 1, 4], [0, 1, 1, 4]],
[[0, 0, 0, 2], [1, 1, 1, 2], [1, 0, 1, 3], [1, 0, 1, 4], [1, 1, 1, 4]],
[[1, 0, 1, 0], [1, 1, 0, 0], [1, 1, 0, 3], [1, 1, 1, 3], [0, 1, 0, 4]]
]
A partition for subsets of sizes:  4*3  3*4  3*5

[2, 2, 2, 5]
[5, 1, 4]
[
[[0, 0, 1, 0], [0, 1, 0, 0], [0, 1, 1, 0]],
[[1, 0, 0, 0], [0, 0, 0, 1], [1, 0, 0, 4]],
[[1, 1, 1, 0], [0, 0, 1, 1], [1, 1, 0, 4]],
[[0, 1, 0, 1], [0, 1, 1, 1], [0, 0, 1, 3]],
[[1, 0, 1, 1], [1, 1, 0, 1], [0, 1, 1, 3]],
[[0, 0, 0, 2], [0, 0, 1, 2], [0, 1, 0, 2], [0, 1, 1, 4]],
[[1, 0, 0, 2], [1, 0, 1, 2], [0, 0, 0, 3], [0, 0, 0, 4], [0, 0, 1, 4]],
[[1, 0, 0, 1], [0, 1, 0, 3], [1, 0, 0, 3], [1, 0, 1, 4], [1, 1, 1, 4]],
[[1, 1, 0, 2], [1, 0, 1, 3], [1, 1, 0, 3], [1, 1, 1, 3], [0, 1, 0, 4]],
[[1, 0, 1, 0], [1, 1, 0, 0], [1, 1, 1, 1], [0, 1, 1, 2], [1, 1, 1, 2]]
]
A partition for subsets of sizes:  5*3  1*4  4*5
\end{verbatim}

\newpage

\begin{verbatim}
[2, 2, 2, 5]
[1, 4, 4]
[
[[0, 0, 1, 0], [0, 1, 0, 0], [0, 1, 1, 0]],
[[1, 0, 0, 0], [1, 0, 1, 0], [1, 1, 0, 0], [1, 1, 1, 0]],
[[0, 0, 0, 1], [0, 0, 1, 1], [0, 1, 0, 1], [0, 1, 1, 2]],
[[1, 0, 0, 1], [1, 0, 1, 1], [1, 1, 0, 1], [1, 1, 1, 2]],
[[0, 0, 0, 2], [0, 0, 1, 2], [0, 1, 0, 2], [0, 1, 1, 4]],
[[1, 0, 0, 2], [1, 0, 1, 2], [0, 0, 0, 3], [0, 0, 0, 4], [0, 0, 1, 4]],
[[1, 1, 1, 1], [0, 1, 0, 3], [0, 1, 1, 3], [0, 1, 0, 4], [1, 0, 0, 4]],
[[1, 1, 0, 2], [0, 0, 1, 3], [1, 1, 0, 3], [1, 1, 1, 3], [1, 1, 0, 4]],
[[0, 1, 1, 1], [1, 0, 0, 3], [1, 0, 1, 3], [1, 0, 1, 4], [1, 1, 1, 4]]
]
A partition for subsets of sizes:  1*3  4*4  4*5

[2, 2, 2, 5]
[2, 2, 5]
[
[[0, 0, 1, 0], [0, 1, 0, 0], [0, 1, 1, 0]],
[[1, 0, 0, 0], [0, 0, 0, 1], [1, 0, 0, 4]],
[[1, 1, 1, 0], [0, 0, 1, 1], [0, 1, 0, 1], [1, 0, 0, 3]],
[[0, 1, 1, 1], [1, 0, 0, 1], [1, 0, 1, 1], [0, 1, 0, 2]],
[[1, 1, 1, 1], [0, 0, 0, 2], [0, 0, 1, 2], [0, 1, 1, 2], [1, 0, 1, 3]],
[[1, 0, 0, 2], [1, 0, 1, 2], [0, 0, 0, 3], [0, 0, 0, 4], [0, 0, 1, 4]],
[[1, 0, 1, 0], [0, 1, 0, 3], [0, 1, 0, 4], [0, 1, 1, 4], [1, 1, 0, 4]],
[[1, 1, 1, 2], [0, 1, 1, 3], [1, 1, 0, 3], [1, 1, 1, 3], [1, 0, 1, 4]],
[[1, 1, 0, 0], [1, 1, 0, 1], [1, 1, 0, 2], [0, 0, 1, 3], [1, 1, 1, 4]]
]
A partition for subsets of sizes:  2*3  2*4  5*5

[2, 2, 2, 5]
[3, 0, 6]
[
[[0, 0, 1, 0], [0, 1, 0, 0], [0, 1, 1, 0]],
[[1, 0, 0, 0], [0, 0, 0, 1], [1, 0, 0, 4]],
[[1, 1, 1, 0], [0, 0, 1, 1], [1, 1, 0, 4]],
[[0, 1, 0, 1], [0, 1, 1, 1], [1, 0, 0, 1], [0, 0, 0, 3], [1, 0, 1, 4]],
[[1, 1, 1, 1], [0, 0, 0, 2], [0, 0, 1, 2], [0, 1, 0, 2], [1, 0, 0, 3]],
[[1, 0, 0, 2], [1, 0, 1, 2], [0, 1, 0, 3], [0, 0, 0, 4], [0, 1, 1, 4]],
[[1, 1, 0, 2], [0, 0, 1, 3], [0, 1, 1, 3], [1, 0, 1, 3], [0, 0, 1, 4]],
[[1, 0, 1, 1], [1, 1, 0, 1], [0, 1, 1, 2], [1, 1, 1, 2], [1, 1, 1, 4]],
[[1, 0, 1, 0], [1, 1, 0, 0], [1, 1, 0, 3], [1, 1, 1, 3], [0, 1, 0, 4]]
]
A partition for subsets of sizes:  3*3  0*4  6*5
\end{verbatim}

\newpage

\begin{verbatim}
[2, 2, 2, 5]
[0, 1, 7]
[
[[0, 0, 1, 0], [0, 1, 0, 0], [1, 0, 0, 0], [1, 1, 1, 0]],
[[1, 0, 1, 0], [1, 1, 0, 0], [0, 0, 0, 1], [0, 0, 1, 1], [0, 1, 0, 3]],
[[0, 1, 0, 1], [0, 1, 1, 1], [1, 0, 0, 1], [0, 0, 0, 3], [1, 0, 1, 4]],
[[1, 1, 1, 1], [0, 0, 0, 2], [0, 0, 1, 2], [0, 1, 0, 2], [1, 0, 0, 3]],
[[1, 0, 0, 2], [1, 0, 1, 2], [0, 1, 1, 3], [0, 0, 0, 4], [0, 1, 0, 4]],
[[0, 1, 1, 0], [0, 0, 1, 3], [0, 0, 1, 4], [1, 0, 0, 4], [1, 1, 1, 4]],
[[1, 1, 1, 2], [1, 0, 1, 3], [1, 1, 0, 3], [1, 1, 1, 3], [0, 1, 1, 4]],
[[1, 0, 1, 1], [1, 1, 0, 1], [0, 1, 1, 2], [1, 1, 0, 2], [1, 1, 0, 4]]
]
A partition for subsets of sizes:  0*3  1*4  7*5
\end{verbatim}

\normalsize
\section{Zero-sum partitions of 
\texorpdfstring{$\left((\zet_2)^3\times\zet_7\right)^*$}
{((Z\_2) \textasciicircum 3 x Z\_7)*}
 - 2 special cases}\label{a:2227}
\footnotesize

\begin{verbatim}
[2, 2, 2, 7]
[10, 0, 5]
[
[[0, 0, 0, 1], [0, 0, 1, 0], [0, 0, 1, 1]],
[[0, 1, 0, 0], [1, 0, 0, 0], [6, 1, 0, 0]],
[[0, 1, 1, 1], [1, 0, 0, 1], [6, 1, 1, 0]],
[[1, 0, 1, 0], [1, 0, 1, 1], [5, 0, 0, 1]],
[[1, 1, 0, 1], [1, 1, 1, 0], [5, 0, 1, 1]],
[[2, 0, 0, 0], [2, 0, 0, 1], [3, 0, 0, 1]],
[[2, 0, 1, 1], [2, 1, 0, 0], [3, 1, 1, 1]],
[[2, 1, 1, 0], [6, 0, 0, 1], [6, 1, 1, 1]],
[[2, 0, 1, 0], [6, 0, 0, 0], [6, 0, 1, 0]],
[[3, 1, 0, 0], [5, 1, 1, 1], [6, 0, 1, 1]],
[[2, 1, 0, 1], [4, 0, 0, 0], [4, 0, 1, 0], [5, 0, 1, 0], [6, 1, 0, 1]],
[[4, 1, 0, 0], [4, 1, 0, 1], [4, 1, 1, 0], [4, 1, 1, 1], [5, 0, 0, 0]],
[[1, 1, 0, 0], [1, 1, 1, 1], [3, 1, 0, 1], [4, 0, 1, 1], [5, 1, 0, 1]],
[[2, 1, 1, 1], [3, 0, 0, 0], [3, 0, 1, 0], [3, 0, 1, 1], [3, 1, 1, 0]],
[[0, 1, 0, 1], [0, 1, 1, 0], [4, 0, 0, 1], [5, 1, 0, 0], [5, 1, 1, 0]
]
A partition for subsets of sizes: 10*3  0*4  5*5
\end{verbatim}

\begin{verbatim}
[2, 2, 2, 7]
[5, 0, 8]
[
[[0, 0, 0, 1], [0, 0, 1, 0], [0, 0, 1, 1]],
[[0, 1, 0, 0], [1, 0, 0, 0], [6, 1, 0, 0]],
[[0, 1, 1, 1], [1, 0, 0, 1], [6, 1, 1, 0]],
[[1, 0, 1, 0], [1, 0, 1, 1], [5, 0, 0, 1]],
[[1, 1, 0, 1], [1, 1, 1, 0], [5, 0, 1, 1]],
[[2, 0, 0, 0], [2, 0, 0, 1], [2, 0, 1, 0], [2, 0, 1, 1], [6, 0, 0, 0]],
[[2, 1, 0, 1], [2, 1, 1, 0], [2, 1, 1, 1], [3, 0, 0, 0], [5, 1, 0, 0]],
[[3, 0, 1, 0], [3, 0, 1, 1], [3, 1, 0, 0], [6, 0, 1, 0], [6, 1, 1, 1]],
[[3, 1, 1, 1], [4, 0, 0, 0], [4, 0, 0, 1], [4, 0, 1, 1], [6, 1, 0, 1]],
[[4, 1, 0, 0], [4, 1, 0, 1], [4, 1, 1, 0], [4, 1, 1, 1], [5, 0, 0, 0]],
[[1, 1, 0, 0], [1, 1, 1, 1], [2, 1, 0, 0], [5, 0, 1, 0], [5, 1, 0, 1]],
[[3, 0, 0, 1], [3, 1, 0, 1], [4, 0, 1, 0], [5, 1, 1, 1], [6, 0, 0, 1]],
[[0, 1, 0, 1], [0, 1, 1, 0], [3, 1, 1, 0], [5, 1, 1, 0], [6, 0, 1, 1]
]
A partition for subsets of sizes:  5*3  0*4  8*5
\end{verbatim}

\normalsize
\section{Program to check a subset partition}\label{a:program}
We offer a simple program in Python 3 that allows to check easily if a subset partition is a zero-sum partition. It can be executed in a terminal (or in an online Python environment like https://www.online-python.com/ or https://trinket.io/python), with the three elements of the input: description of the group, sizes of the subsets in the partition, and the partition itself, copied-pasted from the annexes.
\footnotesize

\begin{verbatim}
import json

group = json.loads(input())
sizes = json.loads(input())
sets = json.loads(input())
ok = True
for set in sets:
    sums = [0 for pos in range(len(group))]
    for elem in set:
        for pos in range(len(group)):
            sums[pos] = (sums[pos] + elem[pos]) % group[pos]
    if not sums == [0 for i in range(len(group))]:
        ok = False
        break
if ok:
    print("Zero-sum partition")
else:
    print("Not a zero-sum partition")
\end{verbatim}

\end{appendices}

\end{document}